\documentclass[a4paper,reqno,10pt]{amsart}

\usepackage{amsmath,amssymb,amsfonts,amsthm,mathrsfs}
\usepackage{verbatim,wasysym,cite}

\usepackage[latin1]{inputenc}
\usepackage{microtype}
\usepackage{color,enumitem,graphicx}

\usepackage[
colorlinks=true,
urlcolor=blue,
citecolor=red,
linkcolor=blue,
linktocpage,
pdfpagelabels,
bookmarksnumbered,
bookmarksopen
]{hyperref}

\usepackage[english]{babel}
\usepackage[symbol]{footmisc}

\renewcommand{\epsilon}{\varepsilon}

\numberwithin{equation}{section}

\newtheorem{theorem}{Theorem}[section]
\newtheorem{lemma}[theorem]{Lemma}
\newtheorem{remark}[theorem]{Remark}
\newtheorem{definition}[theorem]{Definition}
\newtheorem{proposition}[theorem]{Proposition}
\newtheorem{corollary}[theorem]{Corollary}

\newcommand{\C}{\mathbb C}

\newcommand{\R}{\mathbb R}
\newcommand{\N}{\mathbb N}

\def\({\left(}
\def\){\right)}
\def\<{\left\langle}
\def\>{\right\rangle}

\def\Sch{{\mathcal S}}

\def\H{\mathcal H}

\def\d{{\partial}}
\def\eps{\varepsilon}

\def\u{{\tt u}}

\DeclareMathOperator{\Div}{div}
\DeclareMathOperator{\RE}{Re}
\DeclareMathOperator{\IM}{Im}

\begin{document}

\title[Semiclassical cubic--quintic NLS]
{Soliton Dynamics for the Cubic--Quintic Nonlinear
Schr\"odinger Equation with a Potential}

 \author[Alex H. Ardila]{Alex H. Ardila}
\address{Alex H. Ardila
		\newline \indent Department of Mathematics, Universidad del Valle, Colombia} 
\email{ardila@impa.br}

\author[R. Carles]{R\'emi Carles}
\address{R\'emi Carles
		\newline \indent CNRS, Univ. Rennes, IRMAR - UMR 6625,
		F-35000 Rennes, France}
\email{Remi.Carles@math.cnrs.fr}

\begin{abstract}
We study the semiclassical dynamics of solitary waves for the
three-dimensional cubic--quintic nonlinear Schr\"odinger equation with
an external potential. For initial data given by a modulated free
ground state $P_\omega$, we prove that the wave-packet center follows
the associated classical Hamiltonian flow. For $\omega\in\mathcal I$,
we obtain qualitative persistence near the soliton orbit. Under the
additional positive-slope condition $\omega\in\mathcal I_+$, we
establish an $O(\eps)$ approximation in the scaled $H^1$ norm on
every fixed time interval. We also derive $O(\eps^2)$ concentration
estimates for the mass, momentum, and center of mass. 
\end{abstract}

\subjclass[2020]{Primary 35Q55; Secondary 35B35, 35B40, 81Q20.}

\keywords{Cubic--quintic NLS; semiclassical limit; soliton dynamics;
external potential}

\maketitle

\medskip

\section{Introduction}
\label{intr}

This paper studies the semiclassical dynamics of solitary waves for the
three-dimensional cubic--quintic nonlinear Schr\"odinger equation
under an external potential. For
$0<\eps\leq1$, set
\[
H^\eps:=-\eps^2\Delta+V(x)
\]
and consider the Cauchy problem
\begin{equation}
\label{eqn}
\begin{cases}
i\eps\partial_tu^\eps=H^\eps u^\eps
-|u^\eps|^2u^\eps+|u^\eps|^4u^\eps,
& (t,x)\in\R\times\R^3,\\[1mm]
u^\eps(0,x)
=e^{\frac{i}{\eps}\xi_0\cdot(x-x_0)+i\theta_0}
P_\omega\!\left(\dfrac{x-x_0}{\eps}\right),
& x\in\R^3,
\end{cases}
\end{equation}
where $x_0,\xi_0\in\R^3$ and $\theta_0\in\R$. The soliton-dynamics problem asks whether a wave packet initially localized at
scale $\eps$ preserves its profile and moves, as $\eps\to0$, according to the
classical mechanics generated by the external potential. The pioneering work of
Bronski and Jerrard \cite{BronskiJerrard} combined conservation laws with
variational stability to obtain concentration of mass and momentum and
convergence of a modulated center to the Newtonian trajectory. Keraani
\cite{Keraani} refined this method and proved an $O(\eps)$ approximation in the
scaled $H^1$ norm, with the soliton centered at the exact classical trajectory.
These two results provide the qualitative and quantitative prototypes for the
two theorems stated below.

The same general mechanism has proved robust under substantial changes of the
equation. Squassina \cite{SquassinaMag} incorporated a magnetic field and showed
that the center obeys the Newton--Lorentz law. Montefusco, Pellacci, and
Squassina \cite{MontefuscoPellacciSquassina} treated weakly coupled NLS systems,
where the individual masses and the total momentum concentrate along the
classical flow. Secchi and Squassina \cite{SecchiSquassina} considered fractional
dispersion and exhibited both a modified Newtonian equation and the additional
difficulties caused by the absence of local differential identities. Nonlocal
nonlinearities were addressed by Bonanno, d'Avenia, Ghimenti, and Squassina
\cite{BonannoDaveniaGhimentiSquassina} for generalized Choquard equations,
including situations in which uniqueness and nondegeneracy of ground states are
not known. A different concentration method for strongly nonlinear models was
developed by Benci, Ghimenti, and Micheletti
\cite{BenciGhimentiMicheletti}.
The case of a potential allowed to be singular at one point was
considered by Bonanno \cite{Bonanno}.
For logarithmic NLS, Ardila and Squassina
\cite{ArdilaSquassina} first obtained qualitative dynamics from compactness of
minimizing sequences and then an $O(\eps)$ approximation after establishing the
required quantitative modulation estimate. Finally, although it is not itself a
soliton-dynamics theorem, the work of Zhang, Liu, and Squassina
\cite{ZhangLiuSquassina} shows through the nonlinear Kirchhoff equation how a
spectral coercivity estimate supplies precisely the quantitative modulational
stability needed to close a particle-dynamics argument.

The classical Hamiltonian associated with the principal part of \eqref{eqn} is
\[
h(x,\xi)=|\xi|^2+V(x).
\]
We denote by $(x(t),\xi(t))$ the solution of
\begin{equation}
\label{flow}
\dot x(t)=2\xi(t),
\qquad
\dot\xi(t)=-\nabla V(x(t)),
\qquad
(x(0),\xi(0))=(x_0,\xi_0).
\end{equation}
The corresponding conserved energy is
\[
\mathcal H(t):=\frac12|\xi(t)|^2+\frac12V(x(t))
=\frac12h(x(t),\xi(t)).
\]

Throughout the paper we assume
\begin{equation}
\tag{H1}\label{H1}
V\in C^\infty(\R^3;\R);
\quad
\inf_{x\in\R^3}V(x)>-\infty;
\quad
\forall \alpha\in \N^3,\ |\alpha|\ge 2,\quad \d^\alpha V\in L^\infty(\R^3).
\end{equation}
In particular, $\nabla V$ is globally Lipschitz and has at most linear growth,
while $V$ has at most quadratic growth. Hence \eqref{flow} is globally well
posed. Assumption~\eqref{H1} includes unbounded quadratic
potentials and, in 
particular, harmonic traps (possibly not in all directions,
therefore allowing a partial confinement). Such potentials are relevant in
semiclassical models 
and also occur in the framework of \cite{Keraani}. Since an additive
constant in 
$V$ can be removed from \eqref{eqn} by a time-dependent phase rotation and does
not affect \eqref{flow}, we assume without loss of generality that
$V(x)\geq0$ for $x\in\R^3$.

\begin{remark}\label{rem:smoothness}
  The description of the semiclassical limit $\eps \to 0$ in $u^\eps$,
  that is the proof of Theorems~\ref{t12} and \ref{main} below, uses
  the existence and uniqueness of the solution $u^\eps\in
  C(\R;\Sigma)$ as stated in the conclusion of
  Proposition~\ref{wp} below (including the conservation of mass and
  energy, and 
  the evolution of the center of mass), and, regarding $V$,
  \begin{equation*}
V\in C^3(\R^3;\R),
\qquad
\inf_{x\in\R^3}V(x)>-\infty,
\qquad
\|D^2V\|_{L^\infty}+\|D^3V\|_{L^\infty}<\infty.
\end{equation*}
In  other words, we use the smoothness assumption on $V$ only to
guarantee that the Cauchy problem is solved in a satisfactory way, an
assumption which can certainly be relaxed (at least because the
references \cite{Fujiwara79,Fujiwara,Jao2018} consider finitely many
derivatives of $V$, even though this number does not seem to have been
monitored precisely). 
\end{remark}

For $0<\omega<3/16$, let $P_\omega$ be the positive radial solution of
\begin{equation}
\label{gs}
-\Delta P_\omega+\omega P_\omega-P_\omega^3+P_\omega^5=0,
\qquad P_\omega\in H^1(\R^3).
\end{equation}
Killip, Oh, Pocovnicu, and Vi\c{s}an \cite{KOPV} proved that for each such
$\omega$ there is a unique nonnegative radial solution of \eqref{gs}; it is
strictly positive, radially decreasing, and the map
$\omega\mapsto P_\omega$ is real analytic with values in $H^1(\R^3)$. Their
work also describes the variational and scattering geometry of the free
cubic--quintic flow.

For $u\in H^1(\R^3)$, define
\[
M(u):=\int_{\R^3}|u|^2\,dx,
\qquad
E_0(u):=\frac12\int_{\R^3}|\nabla u|^2\,dx
-\frac14\int_{\R^3}|u|^4\,dx
+\frac16\int_{\R^3}|u|^6\,dx,
\]
and write $m_\omega:=M(P_\omega)$. The fixed-mass variational problem is
\begin{equation}
\label{emin}
E_{\min}(m):=
\inf\bigl\{E_0(u):u\in H^1(\R^3),\ M(u)=m\bigr\}.
\end{equation}
Let $Q_1$ be an optimizer corresponding to $\alpha=1$ in the
Gagliardo--Nirenberg--H\"older inequality of \cite{KOPV}. All such optimizers
have the same mass, so $M(Q_1)$ is well defined. The function $E_{\min}$ is
continuous, concave, nonincreasing, and nonpositive. Moreover,
\[
E_{\min}(m)=0\quad\text{for }0\leq m\leq M(Q_1),
\]
the infimum is not attained for $0<m<M(Q_1)$, and it is attained for
$m\geq M(Q_1)$. For $m>M(Q_1)$ one has $E_{\min}(m)<0$. Every minimizer,
when it exists, belongs up to phase and translation to the soliton family
\cite[Theorem~4.1]{KOPV}.

Uniqueness of $P_\omega$ at a fixed frequency must be distinguished from
uniqueness of the constrained minimizer at a fixed mass: the latter depends on
the global geometry of $\omega\mapsto M(P_\omega)$. To state exactly the
variational hypothesis used in the compactness argument, set
$
\mathcal O_\omega
:=\bigl\{e^{i\theta}P_\omega(\cdot-y):
\theta\in\R,\ y\in\R^3\bigr\}
$
and
\[
\mathcal G(m):=
\bigl\{u\in H^1(\R^3):M(u)=m,\ E_0(u)=E_{\min}(m)\bigr\}.
\]

We define
\begin{equation}
\label{iset}
\mathcal I:=
\left\{
\omega\in\left(0,\tfrac{3}{16}\right):
M(P_\omega)>M(Q_1),\quad
\mathcal G(M(P_\omega))=\mathcal O_\omega
\right\}.
\end{equation}

Thus $P_\omega$ is, for $\omega\in\mathcal I$, the unique fixed-mass energy
minimizer at its mass modulo phase and translation. The strict mass inequality
ensures that the minimum energy is negative, which rules out vanishing in the
compactness argument. As we will see in Remark~\ref{Re11} below, $\mathcal I$ is nonempty. In fact, there exists $\omega_1\in(0,3/16)$ such that
$\left(\omega_1,\tfrac{3}{16}\right)\subset\mathcal I$.

For the quantitative theory we impose the additional condition
\[
\mathcal I_+:=
\left\{
\omega\in\mathcal I:
\tfrac{d}{d\omega}M(P_\omega)>0
\right\}.
\]
This is the variationally admissible part of the positive-slope branch. 

\begin{remark}\label{Re11}
It is worth emphasizing that $\mathcal I_+$ is nonempty. Indeed,  \cite[Theorem~4]{LewinRota} shows that $M(P_\omega)\to+\infty$ and $\tfrac{d}{d\omega}M(P_\omega)>0$,
as $\omega\uparrow3/16$, while \cite[Theorem~7]{LewinRota} establishes
uniqueness, up to phase and translation, of fixed-mass energy
minimizers for all sufficiently large masses. Moreover,
\cite[Corollary~5]{LewinRota} identifies the high-frequency solution
as the positive-slope solution in the three-dimensional cubic--quintic
case. Consequently, there exists $\omega_0\in(0,3/16)$ such that
$\left(\omega_0,\tfrac{3}{16}\right)\subset\mathcal I_+$. 

\end{remark}

We retain the intrinsic formulation in terms of $\mathcal I$ and
$\mathcal I_+$ because 
these are exactly the hypotheses used by our proofs and because they
separate 
the compactness input from the spectral one. Indeed, let
\[
L_+^\omega:=-\Delta+\omega-3P_\omega^2+5P_\omega^4.
\]
Differentiating \eqref{gs} with respect to $\omega$ gives
\[
L_+^\omega\partial_\omega P_\omega=-P_\omega,
\qquad
\langle\partial_\omega P_\omega,P_\omega\rangle_{L^2}
=\tfrac12\tfrac{d}{d\omega}M(P_\omega).
\]
Thus the positive-slope condition expresses the transversality of the soliton
curve to the fixed-mass constraint. It is precisely the condition that yields
the quantitative coercivity of the linearized action, in the spirit of the
modulational estimates initiated by Weinstein \cite{Weinstein}.

The soliton dynamics of Eq. \eqref{eqn}  decorrelates  the role of the
potential and the role of the nonlinearity, as it combines semiclassical
soliton dynamics (involving $V$ through the classical trajectories
\eqref{flow}) with the variational and spectral analysis of
cubic--quintic ground states. The 
conservation-law strategy remains available, but the cubic--quintic
variational 
geometry forces a distinction between compactness-based orbital
stability and 
quantitative spectral coercivity. This distinction is reflected in our
two main 
results: the first applies to the variationally admissible set
$\mathcal I$ and is 
qualitative, whereas the second applies to its positive-slope subset
$\mathcal I_+$ and gives a uniform, quantitative approximation on
every fixed time interval.

For $f\in H^1(\R^3)$, introduce the scaled norm
\[
\|f\|_{H^1_\eps}^2
:=\eps^{-3}\|f\|_{L^2}^2+\eps^{-1}\|\nabla f\|_{L^2}^2.
\]
The scaled mass and energy are
\[
M^\eps(u):=\eps^{-3}\int_{\R^3}|u|^2\,dx
\]
and
\[
E^\eps_{V}(u):=\eps^{-3}\int_{\R^3}
\left[
\frac{\eps^2}{2}|\nabla u|^2+\frac12V(x)|u|^2
-\frac14|u|^4+\frac16|u|^6
\right]dx.
\]
We also use
\[
\Sigma:=\{u\in H^1(\R^3):|x|u\in L^2(\R^3)\}
\]
and the scaled mass and momentum densities
\begin{equation}
\label{dens}
\rho^\eps(t,x):=\eps^{-3}|u^\eps(t,x)|^2,
\qquad
p^\eps(t,x):=2\eps^{-2}\IM
\bigl(\overline{u^\eps(t,x)}\nabla u^\eps(t,x)\bigr).
\end{equation}
The normalization of $p^\eps$ agrees with the classical velocity
$\dot x=2\xi$.

The free cubic--quintic equation is globally well posed in $H^1$ and
$\Sigma$ from \cite{Zhang}. We state here the
corresponding result with an external potential, including all
conservation and balance laws used later.

\begin{proposition}
\label{wp}
Assume \eqref{H1}, let $0<\eps\leq1$, and let $u_0^\eps\in\Sigma$. Then
\[
\begin{cases}
i\eps\partial_tu^\eps+\eps^2\Delta u^\eps=V(x)u^\eps
-|u^\eps|^2u^\eps+|u^\eps|^4u^\eps,\\[1mm]
u^\eps(0)=u_0^\eps,
\end{cases}
\]
has a unique global (strong) solution $u^\eps\in C(\R;\Sigma)\cap
L^6_{\rm loc} (\R;L^{18}(\R^3))$. Mass and energy are conserved,
\[
M^\eps(u^\eps(t))=M^\eps(u_0^\eps),
\qquad
E^\eps_{V}(u^\eps(t))=E^\eps_{V}(u_0^\eps),\quad \forall t\in \R.
\]
Moreover,
\[
\partial_t\rho^\eps+\Div p^\eps=0
\]
in the sense of distributions, and
\[
t\longmapsto\int_{\R^3}p^\eps(t,x)\,dx
\]
is locally absolutely continuous with
\[
\frac{d}{dt}\int_{\R^3}p^\eps(t,x)\,dx
=-2\int_{\R^3}\nabla V(x)\rho^\eps(t,x)\,dx
\]
for almost every $t\in\R$.
\end{proposition}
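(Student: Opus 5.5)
Since $\eps$ is fixed, I first rescale time ($t\mapsto \eps t$) and space ($x\mapsto x/\eps$, with $V$ replaced by $V(\eps\,\cdot)$, which still satisfies \eqref{H1} uniformly). This reduces the problem to $i\partial_t u+\Delta u=Vu-|u|^2u+|u|^4u$ on $\R^3$. The linear part $H=-\Delta+V$, with $V$ smooth and $\partial^\alpha V\in L^\infty$ for $|\alpha|\ge2$, falls under Fujiwara's construction \cite{Fujiwara79,Fujiwara}. That construction yields a propagator $U(t)=e^{-itH}$ with the local dispersive bound $\|U(t)\|_{L^1\to L^\infty}\lesssim |t|^{-3/2}$ for $|t|\le\delta$. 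Hence we have local-in-time Strichartz estimates for all admissible pairs, in particular $(6,18)$ and $(\infty,2)$, $(2,6)$.

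For the commutators I will use the operators $A=\nabla$ and $B=x$. These satisfy closed relations $i[\partial_t+iH,\nabla]\sim \nabla V$ and $[H,x]=-2\nabla$. Because $\nabla V$ grows at most linearly, the vector $(u,\nabla u,xu)$ obeys a linear system with bounded coupling, as in Carles' and Jao's treatment \cite{Jao2018}. This lets me control the $\Sigma$-norm via Strichartz on short intervals.

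\textbf{Local existence.} I will run a fixed point in $X_T=\{u: u,\nabla u,xu\in L^\infty_TL^2\cap L^6_TL^{18}\}$. The nonlinearities are handled via the dual Strichartz norms. For the quintic term I use the energy-critical estimate $\||u|^4\nabla u\|_{L^2_TL^{6/5}}\lesssim \|u\|_{L^{10}_TL^{10}}^4\|\nabla u\|_{L^{10}_TL^{30/13}}$, where the norms are controlled through $L^6L^{18}$ and Sobolev. The cubic term is subcritical and gains a power of $T$. This gives local well-posedness in $\Sigma$, with existence time depending on the profile and not only on the norm, because the quintic term is $\dot H^1$-critical. Uniqueness follows in the class $L^6_{\rm loc}L^{18}$.

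\textbf{Conservation laws and the momentum law.} Mass and energy conservation are first proved for regularized data, or by the standard Ozawa-type argument using the Duhamel formula. The energy is well defined since $V|u|^2\in L^1$ for $u\in\Sigma$, using the quadratic growth of $V$. The continuity equation follows by multiplying by $\bar u$ and taking imaginary parts, in the distributional sense. For the momentum I compute $\partial_t\,\IM(\bar u\nabla u)$. The nonlinear terms form exact divergences: $\RE(\bar u\nabla(|u|^2u))$ and its quintic analogue are gradients of $|u|^4$ and $|u|^6$. The quadratic part is a divergence of the stress tensor. Integrating, only $-\int\nabla V|u|^2$ survives, and in the original scaling this gives the stated factor $-2$. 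Justifying it at the $\Sigma$ level is done by approximation with $H^2\cap\{|x|^2u\in L^2\}$ data together with continuous dependence. Note that $\nabla V\,\rho\in L^1$ because $|\nabla V|\lesssim1+|x|$ and $u\in\Sigma$. Absolute continuity then follows from the integrated identity.

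\textbf{Globalization (main obstacle).} The energy is coercive: it is nonnegative up to the cubic term, since $\frac14|u|^4\le \frac18|u|^2+\frac18|u|^6$ pointwise and $\frac16>\frac18$. Hence the energy and mass bound $\|\nabla u\|_{L^2}$ and $\|u\|_{L^6}$ uniformly. The $x u$ norm grows at most exponentially by the commutator system. However, a uniform $H^1$ bound does not prevent blow-up for a $\dot H^1$-critical problem, so the real issue is a global spacetime bound on the quintic part. Here I will invoke Zhang's result \cite{Zhang} for the free cubic--quintic equation, which relies on the defocusing energy-critical theory. The potential is then handled as in Jao's work \cite{Jao2018} on energy-critical NLS with quadratic-type potentials: localizing in time to intervals of length $\lesssim\delta$ and comparing with the free flow via a perturbation/stability lemma, using that on short time and at high frequency $V$ acts as a perturbation. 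This step, transferring the critical spacetime bound to the equation with potential, is where I expect the real difficulty. It is the justification for citing \cite{Jao2018} rather than reproving it.
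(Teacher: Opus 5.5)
\textbf{Gap: the globalization step.} Your local theory and your derivation of the conservation and momentum laws are fine in substance. There is one slip: $(6,18)$ is not an admissible pair in dimension three. The $L^6_tL^{18}_x$ bound comes from Sobolev applied to $\nabla u\in L^6_tL^{18/7}_x$, so $X_T$ should require $\nabla u,xu\in L^6_TL^{18/7}$, not $L^6_TL^{18}$.

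The genuine gap is the globalization, which you defer to \cite{Jao2018}. That result assumes, besides \eqref{H1}, the confinement $V(x)\gtrsim|x|^2$. Assumption \eqref{H1} also allows bounded or only partially confining potentials, e.g.\ $V(x)=x_1^2$. In that case:
\begin{itemize}
\item the energy space $\mathcal H^1$ is merely $H^1$ in the unconfined directions;
\item the frames of \cite{Jao2018}, defined through $N_n^{-1}V(x_n)^{1/2}\to r_\infty$, no longer control the concentration points $x_n$.
\end{itemize}
So the profile decomposition there cannot simply be quoted. Removing $V\gtrsim|x|^2$ is precisely what the paper's appendix does:
\begin{itemize}
\item it works in $\Sigma$ rather than $\mathcal H^1$;
\item it uses the equivalence $\|H^s\phi\|_{L^p}\approx\|(-\Delta)^s\phi\|_{L^p}+\|V^s\phi\|_{L^p}$, valid under \eqref{H1} and $V\ge1$ alone \cite{Ca26};
\item it propagates the $\Sigma$ norm and upgrades $\mathcal H^1$-solutions to $\Sigma$-solutions;
\item it redefines frames by $N_n^{-1}|x_n|\to r_\infty$, with $|x_n|\lesssim N_n$ obtained in the inverse Strichartz step from the weight in $\Sigma$ and heat-kernel bounds;
\item it checks the Bernstein estimates for the Littlewood--Paley projections associated with $H$.
\end{itemize}
None of this is in your plan, so as written your argument covers only confining potentials.

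\textbf{The mechanism you describe would not close.} A stability comparison with a free flow on short time intervals cannot work for large critical data. The term $Vu$ is negligible only for the part of the solution concentrating at frequencies $N_n\to\infty$. Isolating that part requires the induction-on-energy argument of \cite{Jao2016,Jao2018}, built on a profile decomposition adapted to $e^{-itH}$.

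Moreover, if the free cubic--quintic flow of \cite{Zhang} is meant as the comparison object for concentrating profiles, it is the wrong model. Under $N_n^{1/2}v(N_n^2t,N_nx)$, both $V$ and the cubic term scale away. What is needed are global spacetime bounds and wave operators for the limit equation, the latter for profiles with $N_n^2t_n\to\pm\infty$. These hold for the defocusing quintic NLS. They fail for the free cubic--quintic equation, since $e^{i\omega t}P_\omega$ solves it and has infinite $L^{10}_{t,x}(\R\times\R^3)$ norm.

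This is why the paper proceeds in the opposite order from yours. It first solves the quintic equation with potential globally in $\Sigma$, with $u,\nabla u,xu\in L^q_{\rm loc}L^r$ for every admissible pair. Only then does it add the cubic term as a subcritical perturbation, repeating Zhang's argument rather than invoking his theorem.
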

In the case $V(x)=|x|^2$, this result was proven in
\cite{Jao2016}. It was generalized in \cite{Jao2018} to
potentials satisfying Assumption~\eqref{H1} with in addition
$V(x)\gtrsim |x|^2$. We explain in the appendix how
Proposition~\ref{wp} can be inferred from the approach adopted in
these two references.
\smallbreak

 For the initial
datum in \eqref{eqn}, Proposition~\ref{wp} gives
\begin{equation}
\label{mass}
M^\eps(u^\eps(t))=m_\omega,
\qquad t\in\R,
\end{equation}
and, since $P_\omega$ is real-valued,
\[
\int_{\R^3}p^\eps(0,x)\,dx=2m_\omega\xi_0.
\]

Our first theorem uses only the compactness and uniqueness of fixed-mass
minimizers encoded in $\mathcal I$.

\begin{theorem}[Qualitative semiclassical dynamics]
\label{t12}
Let $\omega\in\mathcal I$. Let $u^\eps$ be the
global solution of \eqref{eqn} given by Proposition~\ref{wp}, and let
$(x(t),\xi(t))$ solve \eqref{flow}. Then there exists $C>0$, independent of
$0<\eps\leq1$, such that
\begin{equation}
\label{e89}
\sup_{t\in\R}\|u^\eps(t)\|_{H^1_\eps}^2\leq C.
\end{equation}
Moreover, for every $\eta>0$ and $T_0>0$ there exists
$\eps_0=\eps_0(\eta,T_0)>0$ such that, for $0<\eps<\eps_0$, there are a time
$T_\eps^*\in(0,T_0]$ and continuous functions
\[
\vartheta^\eps:[0,T_\eps^*)\to\R,
\qquad
z^\eps:[0,T_\eps^*)\to\R^3,
\qquad
w^\eps:[0,T_\eps^*)\to H^1_\eps(\R^3)
\]
for which, with $y^\eps(t):=x(t)+\eps z^\eps(t)$,
\[
u^\eps(t,x)
=e^{\frac{i}{\eps}[\xi(t)\cdot x+\vartheta^\eps(t)]}
P_\omega\!\left(\frac{x-y^\eps(t)}{\eps}\right)
+w^\eps(t,x)
\]
for $0\leq t<T_\eps^*$, and
\begin{equation}
\label{e92}
\sup_{0\leq t<T_\eps^*}\|w^\eps(t)\|_{H^1_\eps}<\eta.
\end{equation}
\end{theorem}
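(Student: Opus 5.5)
The argument follows the conservation-law strategy of Bronski--Jerrard and Keraani. Orbital stability of $P_\omega$ comes from the compactness built into $\mathcal I$. The classical trajectory comes from a Gronwall estimate on the mass-weighted center and momentum.

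\emph{Step 1 (uniform bound \eqref{e89}).} Conservation of $M^\eps$ and $E^\eps_V$ from Proposition~\ref{wp} does it, together with $V\ge0$. Changing variables $x=x_0+\eps y$ gives
\[
E^\eps_V(u^\eps(0))=E_0(P_\omega)+\tfrac12|\xi_0|^2m_\omega+\tfrac12\int V(x_0+\eps y)P_\omega^2\,dy ,
\]
which is bounded uniformly in $\eps$ because $V$ grows at most quadratically and $P_\omega$ decays exponentially. For $v(y)=u^\eps(t,\eps y)$ we have $\eps^{-3}\!\int|u^\eps|^6=\int|v|^6$ and $\eps^{-3}\!\int|u^\eps|^4=\int|v|^4$. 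The quartic term is controlled by the sextic and mass terms through $|v|^4\le \tfrac14|v|^2+|v|^6$, so the quartic--sextic part is bounded below by $-Cm_\omega$. Hence $\eps^{-1}\|\nabla u^\eps\|_{L^2}^2$ is bounded for all $t$, and the mass part is fixed, which gives \eqref{e89}.

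\emph{Step 2 (energy splitting and the Gronwall loop).} Define the gauge-removed rescaled function
\[
v^\eps(t,y):=e^{-\frac i\eps\xi(t)\cdot(x(t)+\eps y)}\,u^\eps\bigl(t,x(t)+\eps y\bigr).
\]
Write $\mathcal P^\eps(t)=\int p^\eps\,dx$ and $X^\eps(t)=\int x\rho^\eps\,dx$. Expanding $|\eps\nabla u|^2$ after removing the phase gives the identity
\[
E^\eps_V(u^\eps)=E_0(v^\eps)+\tfrac12 m_\omega|\xi(t)|^2+\tfrac12\xi(t)\cdot(\mathcal P^\eps-2m_\omega\xi(t))+\tfrac12\int V\rho^\eps\,dx .
\]
Taylor expand $V$ and $\nabla V$ around $x(t)$, using $\|D^2V\|_\infty$ and conservation of $\mathcal H$. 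This bounds the deviation
\[
E_0(v^\eps(t))-E_{\min}(m_\omega)
\]
by $C(|X^\eps/m_\omega-x(t)|^2+|\mathcal P^\eps/(2m_\omega)-\xi(t)|^2)$ plus the quadratic spread $\int|x-x(t)|^2\rho^\eps\,dx$, plus $O(\eps^2)$ terms.

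The balance laws of Proposition~\ref{wp} give
\[
\dot X^\eps=\mathcal P^\eps,\qquad \dot{\mathcal P}^\eps=-2\int\nabla V\rho^\eps\,dx .
\]
Subtracting \eqref{flow}, the discrepancy $|X^\eps/m_\omega-x|+|\mathcal P^\eps/(2m_\omega)-\xi|$ satisfies a Gronwall inequality whose forcing is $\|D^2V\|_\infty$ times the spread of $\rho^\eps$ about $x(t)$. That spread is small, of size $O(\eps)$ in the first moment, as long as $v^\eps$ stays close to the orbit $\mathcal O_\omega$, since $P_\omega$ decays exponentially.

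\emph{Step 3 (compactness, continuity argument and modulation).} The key lemma is a contradiction/concentration-compactness statement. Take $\omega\in\mathcal I$ and a sequence $f_n$ with $M(f_n)\to m_\omega$ and $E_0(f_n)\to E_{\min}(m_\omega)$. Then $\inf_{\theta,y}\|f_n-e^{i\theta}P_\omega(\cdot-y)\|_{H^1}\to0$. Vanishing is excluded because $E_{\min}(m_\omega)<0$. Dichotomy is excluded by the strict subadditivity that follows from the concavity of $E_{\min}$ and its negativity. The limit lies in $\mathcal G(m_\omega)=\mathcal O_\omega$.

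Next run a bootstrap. Let $T^*_\eps$ be the largest time $\le T_0$ on which $\operatorname{dist}_{H^1}(v^\eps(t),\mathcal O_\omega)<\eta$. This is positive by continuity in $\Sigma$, since the distance is zero at $t=0$. On $[0,T^*_\eps)$ the spread is controlled, so Gronwall makes the discrepancy $o(1)$ as $\eps\to0$, uniformly on $[0,T_0]$. Step 2 then makes $E_0(v^\eps)-E_{\min}(m_\omega)$ small, and the lemma forces the distance below $\eta/2$ for small $\eps$.

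Finally choose $\vartheta^\eps,z^\eps$ as near-minimizers of the distance. Continuity can be obtained by selecting a minimizer and using local uniqueness via a small implicit-function modulation near the orbit, or by taking a continuous selection up to an error absorbed in $\eta$. Undoing the scaling converts the $H^1$ distance of $v^\eps$ into the $H^1_\eps$ norm of $w^\eps$, and the phase $\xi(t)\cdot x(t)/\eps$ is absorbed into $\vartheta^\eps$.

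\emph{Main obstacle.} The delicate point is closing the loop between the energy deviation and the spread of $\rho^\eps$. The second-moment term must be bounded using the orbital closeness already obtained, so it requires an a priori localization estimate (exponential decay of $P_\omega$ plus a bound on the $H^1$-small remainder in weighted norms). I would first try to avoid second moments altogether. This can be done by Taylor expanding only to first order in the Gronwall step, and by controlling $\int (V(x)-V(x(t))-\nabla V(x(t))\cdot(x-x(t)))\rho^\eps$ with $|x-x(t)|^2\rho^\eps$ restricted to a ball. The tail would be bounded by the $L^2$ smallness of $w^\eps$ together with the quadratic growth of $V$ paired with the $\Sigma$ bound.
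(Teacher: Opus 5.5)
Your proof of the statement as written is correct, but only through its elementary part. The bootstrap you layer on top is a genuine gap and should be dropped.

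\textbf{What the statement needs.} Theorem~\ref{t12} only asks for \emph{some} $T_\eps^*\in(0,T_0]$. Since $v^\eps(0)=e^{i(\theta_0-\xi_0\cdot x_0/\eps)}P_\omega$ and $t\mapsto v^\eps(t)$ is continuous in $H^1$, the time $T_\eps^*:=\min\{T_0,\ \text{first time } d_\omega(v^\eps(t))=\eta_0\}$ is positive. Lemma~\ref{l45} then gives continuous $\theta_*^\eps,z^\eps$ on $[0,T_\eps^*)$, and rescaling gives \eqref{e92}. Two adjustments are needed:
\begin{itemize}
\item take $\eta_0<\delta_\omega$, because the implicit-function modulation is what yields continuity (a selection of near-minimizers is not automatically continuous);
\item take $C_\xi C_\omega\eta_0<\eta$ rather than the threshold $\eta$ itself, because $\|w^\eps\|_{H^1_\eps}^2=\|r\|_2^2+\|\nabla r+i\xi r\|_2^2$ and $\|r\|_{H^1}\le C_\omega d_\omega$.
\end{itemize}
This is more direct than the paper's proof. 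It uses neither Proposition~\ref{p22} nor smallness of $\eps$, and in fact not even $\omega\in\mathcal I$. The paper instead defines $T_\eps^*$ as the exit time of $\beta^\eps=\tfrac12|\xi||\sigma^\eps|+\tfrac12|\lambda^\eps|$ from $[0,h/2]$, using $\beta^\eps(0)=O(\eps^2)$. It then converts smallness of $\beta^\eps$ into orbital closeness via Lemma~\ref{l35} and compactness. That costs $\omega\in\mathcal I$ and small $\eps$, but it ties the lifespan of the decomposition to the macroscopic defects that Section~\ref{pmt} later controls quantitatively.

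\textbf{Where the bootstrap fails.} The bootstrap in Steps~2--3 would give $T_\eps^*=T_0$, which the paper explicitly does not claim on $\mathcal I$, and it does not close. For fixed $\eta_0$, $H^1$-closeness to $\mathcal O_\omega$ does not make the spread of $\rho^\eps$ of size $O(\eps)$. Write $v^\eps=e^{i\theta}(P_\omega+r)(\cdot-z)$. The $|r|^2$ part of the density contributes to $\int|x-x(t)|\rho^\eps\,dx$ an amount that $\|r\|_{H^1}$ does not control at all. Even with cutoffs and $C_b^2$ test functions, where mass conservation removes the leading term linear in $r$, the concentration error is of order $\eps^2+\Xi^\eps+\|r\|_{H^1}^2$ (compare Lemma~\ref{l52}; $\Xi^\eps$ is the paper's defect vector).

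Gronwall therefore bounds the defects, and hence the energy deficit, only by $C_{T_0}(\eps^2+\sup_t\|r(t)\|_{H^1}^2)$. This does not vanish as $\eps\to0$. Compactness gives no quantitative relation between this bound and the threshold $h$ of Proposition~\ref{p22} that you need in order to improve $d_\omega<\eta_0$.

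The loop closes only if the deficit controls the remainder linearly, $\|r\|_{H^1}^2\lesssim E_0(v^\eps)-E_0(P_\omega)$. That turns the estimate into $\Xi^\eps(t)\lesssim\eps^2+\int_0^t\Xi^\eps$. This is Proposition~\ref{stab}, available only on $\mathcal I_+$, and it is exactly how Theorem~\ref{main} is proved. With only a modulus of continuity $F$ you get $\Xi^\eps(t)\lesssim\eps^2+\int_0^tF(\Xi^\eps+\eps^2)$, which does not keep $\Xi^\eps$ small on a fixed interval. The tail and second-moment issue you flag is secondary to this missing coercivity.

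\textbf{Two smaller slips.}
\begin{itemize}
\item In Step~1, $|v|^4\le\tfrac14|v|^2+|v|^6$ only yields $-\tfrac14|v|^4+\tfrac16|v|^6\ge-\tfrac1{16}|v|^2-\tfrac1{12}|v|^6$, which is not a lower bound by the mass. Use $\tfrac14|v|^4\le\tfrac{3}{32}|v|^2+\tfrac16|v|^6$, i.e.\ the identity for $E_0+\tfrac3{32}M$ in the proof of Lemma~\ref{l23}.
\item In Step~2, the deficit is not quadratic in the discrepancies: \eqref{l35e} contains the linear term $-\tfrac12\xi(t)\cdot\sigma^\eps(t)$.
\end{itemize}
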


Theorem~\ref{t12} is  qualitative. It asserts closeness to the
soliton orbit but gives no rate in $\eps$; its center is the modulated point
$y^\eps=x+\eps z^\eps$, and the argument only proves that
$T_\eps^*>0$. In particular, it neither shows that $T_\eps^*=T_0$ nor excludes
the possibility that $T_\eps^*\to0$ as $\eps\to0$. This is the same structural
limitation that appears when soliton dynamics is based only on compactness of
minimizing sequences, as in the qualitative logarithmic and fractional results
of \cite{ArdilaSquassina,SecchiSquassina}.

The positive-slope condition removes this obstruction by furnishing a quadratic
modulational estimate. It leads to our main result.

\begin{theorem}[Semiclassical dynamics of a cubic--quintic soliton]
\label{main}
Let $\omega\in\mathcal I_+$. Let $u^\eps$ be the
global solution of \eqref{eqn} given by Proposition~\ref{wp}, and let
$(x(t),\xi(t))$ solve \eqref{flow}. Then, for every $T>0$, there exist
$\eps_T\in(0,1]$ and $C_T>0$ such that, for every $0<\eps<\eps_T$, there are
a continuous phase $\theta^\eps:[0,T]\to\R$ and
$w^\eps\in C([0,T];H^1_\eps(\R^3))$ satisfying
\begin{equation}
\label{deco}
u^\eps(t,x)
=e^{\frac{i}{\eps}\xi(t)\cdot(x-x(t))+i\theta^\eps(t)}
P_\omega\!\left(\frac{x-x(t)}{\eps}\right)
+w^\eps(t,x)
\end{equation}
for every $t\in[0,T]$, with
\begin{equation}
\label{err}
\sup_{0\leq t\leq T}\|w^\eps(t)\|_{H^1_\eps}\leq C_T\eps.
\end{equation}
\end{theorem}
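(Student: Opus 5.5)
We follow Keraani's conservation-law strategy: first a quantitative modulational stability estimate for the free energy, then energy and momentum bookkeeping, and finally a Gronwall argument that pins the modulated center to $x(t)$.

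\emph{Step 1: quantitative coercivity.} For $\omega\in\mathcal I_+$ we first prove a quadratic lower bound on the energy excess. There are $\delta>0$ and $C>0$ such that for all $v\in H^1(\R^3)$ with $M(v)=m_\omega$ and $\inf_{\theta,y}\|v-e^{i\theta}P_\omega(\cdot-y)\|_{H^1}<\delta$,
\[
\inf_{\theta,y}\|v-e^{i\theta}P_\omega(\cdot-y)\|_{H^1}^2\le C\bigl(E_0(v)-E_0(P_\omega)\bigr).
\]
The argument runs as follows.
\begin{itemize}
\item Modulate $v=e^{i\theta}(P_\omega+r)(\cdot-y)$ via the implicit function theorem, choosing $(\theta,y)$ so that $r$ is orthogonal to $iP_\omega$ and to $\nabla P_\omega$.
\item Expand the action $S_\omega=E_0+\frac\omega2M$ to second order: $S_\omega(v)-S_\omega(P_\omega)=\frac12\langle L_+\RE r,\RE r\rangle+\frac12\langle L_-\IM r,\IM r\rangle+O(\|r\|^3)$.
\item Use the mass constraint, which gives $\langle r,P_\omega\rangle=O(\|r\|^2)$.
\item Invoke Weinstein's argument. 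Since $\omega\in\mathcal I$, $P_\omega$ is a constrained minimizer, so $L_+$ has exactly one negative eigenvalue and kernel $\mathrm{span}\{\d_jP_\omega\}$ (nondegeneracy, from \cite{KOPV}/radial ODE uniqueness). Then $\langle\d_\omega P_\omega,P_\omega\rangle>0$ together with $L_+\d_\omega P_\omega=-P_\omega$ gives $\langle L_+ f,f\rangle\gtrsim\|f\|_{H^1}^2$ on $\{P_\omega,\nabla P_\omega\}^\perp$. The operator $L_-$ is coercive on $\{P_\omega\}^\perp$.
\end{itemize}
Combining these bounds (a standard $O(\|r\|^2)$ correction handles the constraint) gives the estimate. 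The basin condition is supplied for short times by the qualitative Theorem~\ref{t12}, and later by continuity (bootstrap).

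\emph{Step 2: energy bookkeeping.} We work in scaled variables. Set $v^\eps(t,z):=e^{-\frac i\eps\xi(t)\cdot(\eps z+x(t))}u^\eps(t,x(t)+\eps z)$, so that $M(v^\eps)=m_\omega$, and write $\rho^\eps,p^\eps$ from \eqref{dens}. A direct expansion gives
\[
E^\eps_V(u^\eps)=E_0(v^\eps)+\tfrac12|\xi|^2m_\omega-\xi\cdot\tfrac12\!\int p^\eps+\tfrac12|\xi|^2 m_\omega\cdots
\]
More cleanly,
\[
E_0(v^\eps)=E^\eps_V(u^\eps)-\tfrac12\!\int V\rho^\eps-\tfrac12m_\omega|\xi|^2+\text{(momentum cross term)}.
\]
Let $P(t)=\int p^\eps$ and $X(t)=\int x\rho^\eps$. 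By conservation of energy and the initial identity $E_0(v^\eps(0))=E_0(P_\omega)$, the excess $E_0(v^\eps(t))-E_0(P_\omega)$ equals a combination of
\[
\tfrac12\!\int V\rho^\eps(0)-\tfrac12\!\int V\rho^\eps(t)
\]
and momentum differences. Using the Taylor expansion of $V$ (with $D^2V,D^3V$ bounded, cf.\ Remark~\ref{rem:smoothness}) we get $\int V\rho^\eps=m_\omega V(X/m_\omega)+O(\eps^2)$ plus a term controlled by the spread of $\rho^\eps$. The spread is bounded thanks to the soliton profile's decay and the remainder estimate of Step 1. Hence
\[
E_0(v^\eps)-E_0(P_\omega)\lesssim \bigl|\tfrac{X(t)}{m_\omega}-x(t)\bigr|^2+\bigl|\tfrac{P(t)}{2m_\omega}-\xi(t)\bigr|^2+\eps^2.
\]
The quadratic (not linear) dependence comes from the conserved classical energy $\mathcal H$ cancelling the first-order terms.

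\emph{Step 3: Ehrenfest and Gronwall.} From Proposition~\ref{wp} we have $\dot X=P$ (via $\d_t\rho+\Div p=0$, which needs $\Sigma$) and $\dot P=-2\int\nabla V\rho^\eps$. Taylor expansion gives $\dot P=-2m_\omega\nabla V(X/m_\omega)+O(\eps^2+\text{spread})$. Set
\[
\gamma(t)=\Bigl|\tfrac{X}{m_\omega}-x\Bigr|+\Bigl|\tfrac{P}{2m_\omega}-\xi\Bigr|.
\]
Comparing with \eqref{flow} and using that $\nabla V$ is Lipschitz yields $\dot\gamma\lesssim\gamma+\eps^2+\text{spread}$. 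The spread is in turn controlled via Step 1: $\|w\|^2\lesssim\gamma^2+\eps^2$. Since $\gamma(0)=0$, Gronwall on $[0,T]$ gives $\gamma\le C_T\eps$.

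Steps 1–2 then give $\inf_{\theta,y}\|v^\eps-e^{i\theta}P_\omega(\cdot-y)\|_{H^1}\le C_T\eps$. The modulation center satisfies $|\eps y-(X/m_\omega-x)|\lesssim\eps\cdot\eps$, so $|y|\lesssim1$; replacing $P_\omega(\cdot-y)$ by $P_\omega$ costs $|y|$, which must also be $O(\eps)$. For this we use $\eps y\approx X/m_\omega-x=O(\eps^2)$, i.e.\ $y=O(\eps)$. The momentum shift is handled analogously. Undoing the scaling gives \eqref{deco}–\eqref{err}, with continuity of $\theta^\eps$ following from the implicit-function construction. The bootstrap keeping $v^\eps$ inside the basin of Step 1 closes for $\eps<\eps_T$.

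\emph{Main obstacle.} The delicate step is Step 1: establishing the coercivity of $L_+$ on the constrained subspace. This requires the nondegeneracy $\ker L_+=\mathrm{span}\{\d_jP_\omega\}$ together with the Morse index one. For the cubic--quintic problem this must be extracted from \cite{KOPV,LewinRota}, or from the minimizing property encoded in $\mathcal I$ via a second-variation argument. A secondary difficulty is sharpening the center control to $O(\eps^2)$ in Step 3, which is needed to remove $z^\eps$.
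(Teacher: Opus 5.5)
\textbf{Gap 1: Steps~2--3 do not close, and the $O(\eps^2)$ centering is missing.} Your Step~1 is fine: it is Proposition~\ref{stab}, with the basin kept by continuity. The trouble starts in Step~2, where the claim that the energy excess is \emph{quadratic} in the defects is unjustified. Conservation of energy gives
$E_0(\psi^\eps)-E_0(P_\omega)=\frac12 m_\omega[V(x(t))-V(X/m_\omega)]-\frac12\xi\cdot\sigma^\eps+\dots$,
and its linear part in $(X/m_\omega-x,\ P/2m_\omega-\xi)$ has no reason to vanish. The pair $(X/m_\omega,P/2m_\omega)$ is not a classical trajectory, and its classical energy drifts at first order in the forcing error. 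What one actually gets is the linear bound of Lemma~\ref{l35}, so only $\|r\|_{H^1}^2\lesssim|\sigma^\eps|+|\lambda^\eps|+\eps^2$. If you treat the concentration error as $O(\|w\|)$, which is what your conclusion $\gamma\le C_T\eps$ reflects, your $\gamma$ obeys $\dot\gamma\lesssim\gamma+\gamma^{1/2}+\eps$, and this does not force $\gamma$ to be small. Even granting your quadratic bound, you only reach a center within $O(\eps)$ of $x(t)$. Removing $z^\eps$ at cost $O(\eps)$ in $H^1_\eps$ requires $|y^\eps-x(t)|\lesssim\eps^2$, and the $O(\eps^2)$ you invoke is asserted, not derived.

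The missing idea is a concentration estimate that is \emph{linear} in the defect $D^\eps=|\sigma^\eps|+|\lambda^\eps|+\eps^2$, as in Lemma~\ref{l52}. For $f\in C_b^2$ one has $\int f\rho^\eps-m_\omega f(y^\eps)=O(\|r\|_{H^1}^2+\eps\|r\|_{H^1}+\eps^2)$, because the first-order term $2f(y^\eps)(r,P_\omega)$ equals $-f(y^\eps)\|r\|_2^2$ by the exact mass constraint. For $p^\eps$, the linear term not killed by orthogonality, $\int\IM r\,\nabla P_\omega$, equals $-\frac14\sigma^\eps+O(\|r\|_{H^1}^2)$, so it is itself a defect. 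This gives $|\dot\sigma^\eps|+|\dot\lambda^\eps|+|\dot\gamma^\eps|\lesssim\Xi^\eps+\eps^2$ with $\Xi^\eps(0)=O(\eps^2)$. Gronwall then yields $\Xi^\eps=O(\eps^2)$, and hence both $\|r\|_{H^1}\lesssim\eps$ and $|y^\eps-x|\lesssim\eps^2$.

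\textbf{Gap 2: the ``spread'' control fails as stated.} You expand $\int V\rho^\eps$ and $\int\nabla V\rho^\eps$ around the uncut barycenter $X/m_\omega$, and relate $X/m_\omega-x$ to the modulation center. Both steps produce terms such as $\eps^2\int|z|^2|r|^2$ or $\eps\int z|r|^2$. Smallness of $r$ in $H^1$ gives no control of spatial moments, since a tiny mass far away can make them large, and the decay of $P_\omega$ does not transfer to $r$. Without a fix, your Ehrenfest comparison $\dot P\approx-2m_\omega\nabla V(X/m_\omega)$ carries an uncontrolled error.

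The paper never tests against unbounded weights. It truncates the defects ($\lambda^\eps$ with $\chi V$, $\gamma^\eps$ with $x\chi$) and uses only $C_b^2$ test functions. It controls the tails as follows:
\begin{itemize}
\item $\int(1-\chi)\rho^\eps\lesssim\mathcal D^\eps$, by concentration tested against $1-\chi$;
\item $\int(1-\chi)V\rho^\eps\lesssim\mathcal D^\eps$, from the favorable sign of this term in \eqref{l35e}, using $V\ge0$ and nonnegativity of the excess;
\item $\int(1-\chi)|\nabla V|\rho^\eps\lesssim\mathcal D^\eps$, via $|\nabla V|^2\le2LV$ (Lemma~\ref{l36}) and Cauchy--Schwarz.
\end{itemize}
A first-exit argument (Lemma~\ref{l54}) then keeps $y^\eps$ in the region where $\chi=1$.
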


Theorem~\ref{main} strengthens Theorem~\ref{t12} in three precise ways. First,
the approximation holds on any prescribed interval $[0,T]$, with a threshold
$\eps_T$ independent of time inside that interval. Second, the soliton is
centered at the exact classical position $x(t)$ rather than at an uncontrolled
modulated center $x(t)+\eps z^\eps(t)$. Third, the error is $O(\eps)$ in the
scaled $H^1$ norm rather than merely smaller than a fixed tolerance.  Consequently, 
Theorem~\ref{main} is a quantitative
improvement of Theorem~\ref{t12} on $\mathcal I_+$, but it does not replace the
qualitative theorem for frequencies in $\mathcal I\setminus\mathcal I_+$.
Its conclusion is the cubic--quintic analogue of the sharp finite-time
approximation obtained for power nonlinearities in \cite{Keraani}, for magnetic
NLS in \cite{SquassinaMag}, for systems in
\cite{MontefuscoPellacciSquassina}, and for the logarithmic equation in
\cite{ArdilaSquassina}.

The external potential breaks the translation and Galilean symmetries of the
free equation. Thus the parameters in \eqref{deco} are not exact symmetry
parameters of the full flow; they describe the semiclassical modulation of the
localized coherent state. The proof further gives second-order concentration of
the physical densities.

\begin{corollary}[Mass and momentum concentration]
\label{mcon}
Assume the hypotheses of Theorem~\ref{main}. For every $T>0$ there exist
$\eps_T>0$ and $C_T>0$ such that, for $0<\eps<\eps_T$,
\begin{equation}
\label{meas}
\sup_{0\leq t\leq T}
\left[
\left\|\rho^\eps(t)\,dx-m_\omega\delta_{x(t)}\right\|_{(C_b^2)^*}
+
\left\|p^\eps(t)\,dx-2m_\omega\xi(t)\delta_{x(t)}\right\|_{(C_b^2)^*}
\right]
\leq C_T\eps^2.
\end{equation}
\end{corollary}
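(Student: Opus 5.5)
The plan is to deduce Corollary~\ref{mcon} from the decomposition \eqref{deco}--\eqref{err} of Theorem~\ref{main}, using a second-order Taylor expansion of the test function around $x(t)$. Write $\Phi^\eps(t,x):=e^{\frac{i}{\eps}\xi(t)\cdot(x-x(t))+i\theta^\eps(t)}P_\omega\bigl(\frac{x-x(t)}{\eps}\bigr)$, so that $u^\eps=\Phi^\eps+w^\eps$. Fix $\varphi\in C_b^2$ with $\|\varphi\|_{C_b^2}\le1$.

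For the mass density, expand
\[
\int\varphi\rho^\eps\,dx=\eps^{-3}\int\varphi|\Phi^\eps|^2\,dx+2\eps^{-3}\RE\int\varphi\overline{\Phi^\eps}w^\eps\,dx+\eps^{-3}\int\varphi|w^\eps|^2\,dx .
\]
The last term is $O(\|w^\eps\|_{H^1_\eps}^2)=O(\eps^2)$. In the first term, change variables $x=x(t)+\eps y$ and Taylor expand:
\[
\varphi(x(t)+\eps y)=\varphi(x(t))+\eps y\cdot\nabla\varphi(x(t))+O(\eps^2|y|^2).
\]
The zeroth-order term gives exactly $m_\omega\varphi(x(t))$. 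The first-order term vanishes because $P_\omega$ is radial. The remainder is bounded by $C\eps^2\int|y|^2P_\omega^2$, which is finite because $P_\omega$ decays exponentially.

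The cross term is the main obstacle, since Cauchy--Schwarz only gives $O(\eps)$. To gain an order I would again expand $\varphi$ around $x(t)$. The $O(\eps)$ part becomes $\varphi(x(t))\cdot2\eps^{-3}\RE\langle\Phi^\eps,w^\eps\rangle$. Since $M^\eps(u^\eps)=M^\eps(\Phi^\eps)=m_\omega$ by \eqref{mass}, this quantity equals $-\eps^{-3}\|w^\eps\|_{L^2}^2=O(\eps^2)$. The linear part in $y$ contributes $2\eps^{-3}\RE\int (x-x(t))\cdot\nabla\varphi\,\overline{\Phi^\eps}w^\eps$. Because $|x-x(t)|\,|\Phi^\eps|=\eps|y|P_\omega(y)$, Cauchy--Schwarz makes this $O(\eps\cdot\eps)=O(\eps^2)$. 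The quadratic remainder is even smaller. This gives the $O(\eps^2)$ bound for $\rho^\eps$.

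For the momentum density, write
\[
p^\eps=2\eps^{-2}\IM(\overline{\Phi^\eps}\nabla\Phi^\eps)+\text{cross}+2\eps^{-2}\IM(\overline{w^\eps}\nabla w^\eps).
\]
The last term integrates against $\varphi$ to something bounded by $\eps^{-3}\|w\|_{L^2}\,\eps\|\nabla w\|_{L^2}\lesssim\|w^\eps\|_{H^1_\eps}^2=O(\eps^2)$. Because $P_\omega$ is real, $2\eps^{-2}\IM(\overline{\Phi^\eps}\nabla\Phi^\eps)=2\xi(t)\eps^{-3}|\Phi^\eps|^2$, which is handled exactly as the mass term. The cross term equals $2\eps^{-2}\IM\int\varphi(\overline{\Phi^\eps}\nabla w^\eps-\overline{w^\eps}\nabla\Phi^\eps)$. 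After integrating by parts in the first piece, it is bounded by $O(\eps)$. To get $O(\eps^2)$ I would use the evolution of $\int p^\eps$ in Proposition~\ref{wp} together with $\dot\xi=-\nabla V(x)$. This yields
\[
\int p^\eps(t)\,dx-2m_\omega\xi(t)=-2\int_0^t\Bigl(\int\nabla V\rho^\eps\,dx-m_\omega\nabla V(x(s))\Bigr)ds .
\]
By the mass estimate already proved, the integrand is $O(\eps^2)$, with $\nabla V$ handled via a cutoff argument and the linear growth of $\nabla V$. Hence the constant-in-space part of the cross term is $O(\eps^2)$. The part linear in $x-x(t)$ again gains a factor $\eps$ from the weight $|y|P_\omega$. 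Here the term involving $\nabla\Phi^\eps$ is estimated after noting $\eps\nabla\Phi^\eps=i\xi\Phi^\eps+e^{i\cdot}\nabla P_\omega(\cdot)$, and both pieces are exponentially localized. Taking the supremum over $t\in[0,T]$ and over $\varphi$ gives \eqref{meas}.

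The delicate point is the step for $\int\nabla V\rho^\eps$ with unbounded $\nabla V$. There I would use that $D^2V$ is bounded, so that $\nabla V(x)-\nabla V(x(s))-D^2V(x(s))(x-x(s))=O(|x-x(s)|^2)$. I would then apply the same Taylor argument with weights $|x-x(s)|^2\rho^\eps$. These are controlled by the $O(\eps^2)$ bound on $\eps^{-3}\int|x-x(t)|^2|u^\eps|^2$, which follows from the decomposition plus a virial-type estimate in $\Sigma$. Alternatively, one could take this from the center-of-mass estimate mentioned in the abstract.
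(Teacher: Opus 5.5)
Your mass estimate is correct and complete. With the soliton centered exactly at $x(t)$ and $\|w^\eps\|_{H^1_\eps}\lesssim_T\eps$ from Theorem~\ref{main}, mass conservation turns the constant part of the cross term into $-\eps^{-3}\|w^\eps\|_2^2$. The weight $|x-x(t)|\,|\Phi^\eps|=\eps|y|P_\omega(y)$ makes the rest $O(\eps\|w^\eps\|_{H^1_\eps})$. Your reduction for the momentum is also right. Since $2\eps^{-2}\IM(\overline{\Phi^\eps}\nabla\Phi^\eps)=2\xi(t)\eps^{-3}|\Phi^\eps|^2$, testing with $\varphi\equiv1$ shows that the constant part of the cross term equals $\sigma^\eps(t)+O(\|w^\eps\|_{H^1_\eps}^2)$. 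So the momentum bound is equivalent to $\sup_{[0,T]}|\sigma^\eps|\lesssim\eps^2$, which the $O(\eps)$ decomposition alone does not give. This is a different route from the paper. The paper never uses the statement of Theorem~\ref{main} here: it reads \eqref{meas} off Lemma~\ref{l55} together with the bootstrap bound \eqref{xibd}, where $|\sigma^\eps|\le\Xi^\eps\lesssim_T\eps^2$ is already proved. (Two minor points: the cross term is $\IM(\overline{\Phi^\eps}\nabla w^\eps+\overline{w^\eps}\nabla\Phi^\eps)$, with a plus sign; and a second-order Taylor remainder for $\nabla V$ needs $D^3V\in L^\infty$, which \eqref{H1} provides.)

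The gap is the proof that $\sigma^\eps=O(\eps^2)$, specifically the tail of $\int\nabla V\rho^\eps$. The $(C_b^2)^*$ bound plus linear growth of $\nabla V$ does not suffice. It controls $\int\chi\nabla V\rho^\eps-m_\omega\nabla V(x(s))$ and $\int(1-\chi)\rho^\eps$, but not $\int(1-\chi)|x|\rho^\eps$, because $\|w^\eps\|_{H^1_\eps}$ carries no spatial weight. Your remedy, the second-moment bound $J(t):=\int|x-x(t)|^2\rho^\eps(t,x)\,dx\lesssim\eps^2$, is only asserted. The center-of-mass estimate cannot supply it: it is a first-moment statement, and in the paper its proof already uses $\sup|\sigma^\eps|\lesssim\eps^2$. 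A direct bound on $J''$ from the usual virial identity does not supply it either. The first variation of the virial functional at $P_\omega$ is not annihilated by the orthogonality conditions, so one only gets $|J''|\lesssim\eps$, which integrates to $J\lesssim\eps$.

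The bound is true, but it must be obtained at first order. Set $D:=\eps\nabla-i\xi(t)$; then $J'(t)=4\eps^{-3}\int(x-x(t))\cdot\IM(\overline{u^\eps}\,Du^\eps)\,dx$. Since $\overline{\Phi^\eps}D\Phi^\eps$ is real, inserting $u^\eps=\Phi^\eps+w^\eps$ gives $|J'|\lesssim(\eps+\sqrt{J})\|w^\eps\|_{H^1_\eps}\lesssim\eps^2+\eps\sqrt{J}$. With $J(0)=O(\eps^2)$ this yields $J\lesssim_T\eps^2$, and your Taylor argument then closes. Note that the $|w^\eps|^2$ part of $\int(x-x(s))\rho^\eps$ is bounded by $(\sqrt{J}+\eps)\|w^\eps\|_{H^1_\eps}$.

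A shorter fix, closer to the paper, uses the energy. Lemma~\ref{l35} and $E_0(\psi^\eps)\ge E_0(P_\omega)$ give $\int(1-\chi)V\rho^\eps\le\lambda^\eps-\xi\cdot\sigma^\eps+O(\eps^2)$, and $\lambda^\eps=O(\eps^2)$ by your mass estimate. Lemma~\ref{l36} and Cauchy--Schwarz then yield $\int(1-\chi)|\nabla V|\rho^\eps\lesssim\eps^2+|\sigma^\eps|$. Hence $|\dot\sigma^\eps|\lesssim\eps^2+|\sigma^\eps|$, and Gronwall's lemma gives $\sigma^\eps=O(\eps^2)$.
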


Define the barycenter of the scaled mass density (or center of mass) by
\[
b^\eps(t):=\frac1{m_\omega}\int_{\R^3}x\rho^\eps(t,x)\,dx.
\]

\begin{corollary}[Barycenter dynamics]
\label{bary}
Assume the hypotheses of Theorem~\ref{main}. For every $T>0$ there exist
$\eps_T>0$ and $C_T>0$ such that, for $0<\eps<\eps_T$,
\begin{equation}
\label{bar2}
\sup_{0\leq t\leq T}|b^\eps(t)-x(t)|\leq C_T\eps^2.
\end{equation}
\end{corollary}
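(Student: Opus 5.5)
We prove Corollary~\ref{bary} by an ODE comparison: derive exact evolution equations for $b^\eps$ and for the normalized total momentum, and compare them with \eqref{flow}, using Corollary~\ref{mcon} to control the force term. Set
\[
\Pi^\eps(t):=\frac1{2m_\omega}\int_{\R^3}p^\eps(t,x)\,dx.
\]
By Proposition~\ref{wp}, $\Pi^\eps(0)=\xi_0$. Moreover,
\[
\dot\Pi^\eps(t)=-\frac1{m_\omega}\int_{\R^3}\nabla V(x)\rho^\eps(t,x)\,dx
\]
for a.e.\ $t$. For the barycenter we use the continuity equation $\partial_t\rho^\eps+\Div p^\eps=0$ tested against $x$; this is the ``evolution of the center of mass'' mentioned in Remark~\ref{rem:smoothness}. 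It gives
\[
\dot b^\eps(t)=\frac1{m_\omega}\int_{\R^3} p^\eps\,dx=2\Pi^\eps(t).
\]
Since $u^\eps\in C(\R;\Sigma)$, $b^\eps$ is well defined and continuous. The identity is rigorous after testing against a cutoff $x\chi(x/R)$, letting $R\to\infty$, and using $|x|u^\eps\in L^2$ together with $\nabla u^\eps\in L^2$. At $t=0$, $\rho^\eps(0)=\eps^{-3}P_\omega^2((x-x_0)/\eps)$ and $P_\omega$ is radial, so $b^\eps(0)=x_0$ exactly.

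Next, compare $(b^\eps,\Pi^\eps)$ with $(x,\xi)$. Split the force as
\[
\frac1{m_\omega}\int_{\R^3}\nabla V\,\rho^\eps\,dx-\nabla V(x(t))
=\frac1{m_\omega}\Bigl(\int_{\R^3}\nabla V\,\rho^\eps\,dx-m_\omega\nabla V(x(t))\Bigr).
\]
The natural tool is Corollary~\ref{mcon}, but $\nabla V$ is not in $C_b^2$ when $V$ grows quadratically. We therefore localize. Since $x(t)$ stays in a compact set $K_T$ for $t\in[0,T]$, choose a cutoff $\chi\in C_c^\infty$ equal to $1$ on a neighborhood of size $1$ around $K_T$. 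Then $\chi\nabla V\in C_b^2$, because $D^2V$ and $D^3V$ are bounded, and Corollary~\ref{mcon} bounds the localized part by $C_T\eps^2$.

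The remainder $\int(1-\chi)\nabla V\rho^\eps\,dx$ is supported at distance at least $1$ from $x(t)$. We control it using the decomposition \eqref{deco} with \eqref{err}:
\[
\rho^\eps=\eps^{-3}\bigl|P_\omega((\cdot-x(t))/\eps)\bigr|^2+\text{cross terms}+\eps^{-3}|w^\eps|^2 .
\]
For the soliton part, the exponential decay of $P_\omega$ makes the contribution $O(\eps^\infty)$, since $|\nabla V|\lesssim 1+|x|$. The terms involving $w^\eps$ are the delicate point, because \eqref{err} gives no weighted control on $w^\eps$ at infinity.

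I see two ways to handle this, and I would try the first one first. Option (a) is to avoid the issue entirely by checking whether the proof of Corollary~\ref{mcon} already yields the estimate for test functions with bounded second and third derivatives and linear growth. That proof is a Taylor expansion around $x(t)$ using $\|w^\eps\|_{H^1_\eps}\le C_T\eps$. In it, the error terms involve $\int |x-x(t)|^2\rho^\eps\,dx$ weighted by $D^2$ of the test function. Option (b) is to establish this second moment directly: prove that $\int|x-x(t)|^2\rho^\eps\,dx\le C_T\eps^2$, via a virial-type computation for $\frac{d}{dt}\int|x-b^\eps|^2\rho^\eps$ combined with energy bounds. In either case, the Taylor expansion $\nabla V(x)=\nabla V(x(t))+D^2V(x(t))(x-x(t))+O(|x-x(t)|^2)$ gives
\[
\Bigl|\frac1{m_\omega}\int_{\R^3}\nabla V\,\rho^\eps\,dx-\nabla V(x(t))\Bigr|
\le C|b^\eps(t)-x(t)|+C_T\eps^2 .
\]
This estimate uses only the boundedness of $D^2V$ and $D^3V$. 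I expect this step, controlling the force without assuming $\nabla V\in C_b^2$, to be the main obstacle.

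Finally, set $\delta(t):=|b^\eps-x|+|\Pi^\eps-\xi|$. Then $\delta(0)=0$ and
\[
\dot\delta\le C\delta+C_T\eps^2 \quad\text{on }[0,T].
\]
Gronwall's lemma then gives $\delta(t)\le C_T'\eps^2$, which is \eqref{bar2}. We can also argue more directly: Corollary~\ref{mcon} already controls $|\Pi^\eps-\xi|\le C_T\eps^2$, since the constant vector test function is in $C_b^2$. Integrating $\dot b^\eps-\dot x=2(\Pi^\eps-\xi)$ from $b^\eps(0)=x(0)$ then yields \eqref{bar2} at once, with no force estimate needed. This shortcut is the route I would actually write. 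The ODE comparison above serves only as a fallback in case the normalization of the momentum statement in Corollary~\ref{mcon} is not directly usable.
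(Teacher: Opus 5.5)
Your shortcut is correct and is essentially the paper's proof: the paper also integrates $\dot b^\eps-\dot x=\sigma^\eps/m_\omega$ from $b^\eps(0)=x_0$. The only difference is that the paper takes $|\sigma^\eps|\lesssim_T\eps^2$ directly from the bootstrap bound \eqref{xibd}, whereas you test Corollary~\ref{mcon} against constant vectors, which gives the same bound; the ODE-comparison fallback, with its unproved second-moment control, is not needed.
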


\subsection*{Organization of the paper}

The paper is organized as follows.
The main arguments for the proof of Proposition~\ref{wp} are given in
Appendix~\ref{sec:cauchy}. 
In Section~\ref{prel}, we introduce the notation and develop the
variational and spectral analysis of the free cubic--quintic ground
states. In particular, we prove compactness of fixed-mass minimizing
sequences, establish the coercivity properties of the linearized
action, and derive the qualitative and quantitative modulation
estimates used later. In Section~\ref{SoD}, we obtain the uniform
semiclassical bounds, the energy expansion, and the mass and momentum
identities, and we prove the qualitative dynamics stated in
Theorem~\ref{t12}. Section~\ref{pmt} is devoted to the proof of
Theorem~\ref{main}: we construct the quantitative modulated
decomposition, estimate the localized mass, momentum, and position
defects, and close the argument by a bootstrap and Gronwall's
lemma. Finally, in Section~\ref{pcor}, we prove the mass and momentum
concentration result, and the barycenter estimate, stated in
Corollaries~\ref{mcon} and~\ref{bary}.

\subsection*{Acknowledgments} A.~H.~Ardila was supported by Universidad del Valle through research project CI-71425.

\subsection*{Notation} We use $\|f\|_p:=\|f\|_{L^p(\R^3)}$, $\|f\|_{H^1}^2=\|f\|_2^2+\|\nabla f\|_2^2$, and view $H^1(\R^3;\C)$ 
as a real Hilbert space with $(f,g):=\RE\int f\bar g$. Thus $f\perp g$ means $(f,g)=0$.  For self-adjoint $A$, $(Af,f)$ denotes the quadratic form. Duality $H^{-1}$--$H^1$ is $\langle\cdot,\cdot\rangle$. 
We write $A\lesssim B$ if $A\le C B$, with subscripts indicating dependence (e.g., $\lesssim_\omega$); constants may change line by 
line. Finally, $o_n(1)\to0$ as $n\to\infty$. Notations are adapted so
that when $\eps$ is present as an exponent, its absence corresponds to
setting $\eps=1$, like we did for the mass and the energy above.


\section{Preliminaries}
\label{prel}

In this section, we collect the variational,
spectral, and modulation results needed in the semiclassical
analysis. We first prove compactness of fixed-mass minimizing
sequences. We then study the
linearized action around $P_\omega$ and show that the
positive-slope condition defining $\mathcal I_+$ yields
coercivity transverse to the phase and translation directions.
Finally, we introduce local modulation coordinates and combine
the spectral and variational information to obtain a
quantitative coercivity estimate near the soliton orbit.

\subsection{Variational analysis}
Throughout this subsection, let $\omega\in\mathcal I$. Recall that by the definition of $\mathcal I$, 
$M(P_\omega)>M(Q_1)$, $E_0(P_\omega)=E_{\min}(M(P_\omega))$, and $\mathcal G(M(P_\omega))=\mathcal O_\omega$. 
Thus $P_\omega$ is, modulo phase and translation, the unique minimizer of $E_0$ at mass $M(P_\omega)$.

\begin{lemma}
\label{l23}
Let $\{u_n\}\subset H^1(\R^3)$ satisfy $M(u_n)=M(P_\omega)$ and $E_0(u_n)\longrightarrow E_0(P_\omega)$. 
Then, after passing to a subsequence, there exist
$\{x_n\}\subset\R^3$ and $\{\theta_n\}\subset\R$ such that
\[
e^{-i\theta_n}u_n(\cdot+x_n)
\to
P_\omega
\qquad
\text{strongly in }H^1(\R^3).
\]
\end{lemma}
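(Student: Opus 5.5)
We argue by concentration--compactness in the form of Lions, with the variational facts recalled above taken as given: $E_{\min}$ is continuous, concave and nonincreasing, $E_{\min}(m)<0$ for $m>M(Q_1)$, and $\mathcal G(M(P_\omega))=\mathcal O_\omega$ because $\omega\in\mathcal I$. Set $m:=M(P_\omega)$, so that $E_0(u_n)\to E_{\min}(m)<0$.

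\textbf{Step 1 (boundedness).} The defocusing quintic term controls the focusing cubic one. By interpolation $\|u\|_4^4\le\|u\|_2\|u\|_6^3\le \tfrac{1}{3}\|u\|_6^6+C\|u\|_2^2$, so
\[
E_0(u)\ge\tfrac12\|\nabla u\|_2^2-C\,M(u).
\]
Hence $\{u_n\}$ is bounded in $H^1(\R^3)$.

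\textbf{Step 2 (non-vanishing).} If $\sup_y\int_{B(y,1)}|u_n|^2\to0$, then Lions' lemma gives $u_n\to0$ in $L^4$. In that case $\liminf E_0(u_n)\ge0$, which contradicts $E_{\min}(m)<0$. So there are $x_n$ with $\int_{B(x_n,1)}|u_n|^2\ge\delta>0$. Put $v_n:=u_n(\cdot+x_n)$; along a subsequence $v_n\rightharpoonup v\neq0$ weakly in $H^1$ and strongly in $L^p_{\rm loc}$.

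\textbf{Step 3 (excluding dichotomy).} This is the main obstacle. Write $\mu:=M(v)\in(0,m]$. Brezis--Lieb splitting, applied to each term of $E_0$ and to $M$, gives with $r_n:=v_n-v$
\[
M(r_n)\to m-\mu,\qquad E_0(v_n)=E_0(v)+E_0(r_n)+o_n(1).
\]
Suppose $\mu<m$. Then $E_0(v)\ge E_{\min}(\mu)$. Rescaling $r_n$ so that its mass is exactly $m-\mu$ and using the continuity of $E_{\min}$ gives $\liminf E_0(r_n)\ge E_{\min}(m-\mu)$. Therefore
\[
E_{\min}(m)\ge E_{\min}(\mu)+E_{\min}(m-\mu).
\]
To reach a contradiction we need the strict subadditivity $E_{\min}(m)<E_{\min}(\mu)+E_{\min}(m-\mu)$ for $0<\mu<m$.

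For this we use the scaling $u\mapsto\sqrt\lambda\,u$, $\lambda>1$. For any $u$ with $E_0(u)<0$ one checks
\[
E_0(\sqrt\lambda u)=\lambda E_0(u)-\tfrac{\lambda^2-\lambda}{4}\|u\|_4^4+\tfrac{\lambda^3-\lambda}{6}\|u\|_6^6,
\]
so the sign of the correction is not immediate. Instead we proceed as follows. Since $E_{\min}$ is concave with $E_{\min}(0)=0$, the map $s\mapsto E_{\min}(s)/s$ is nonincreasing, which already yields non-strict subadditivity. For strictness:
\begin{itemize}
\item If $\mu\le M(Q_1)$, then $E_{\min}(\mu)=0$ and $E_{\min}(m-\mu)\ge E_{\min}(m)$ by monotonicity. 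Equality would force $E_{\min}$ to be constant on $[m-\mu,m]$, which concavity together with $E_{\min}(m)<0=E_{\min}(M(Q_1))$ forbids. The case $m-\mu\le M(Q_1)$ is symmetric.
\item If both $\mu$ and $m-\mu$ exceed $M(Q_1)$, then both $E_{\min}(\mu)$ and $E_{\min}(m-\mu)$ are attained. Equality in the subadditivity would make $s\mapsto E_{\min}(s)/s$ constant on $[\min(\mu,m-\mu),m]$. Since $E_{\min}(s)/s\to0$ as $s\downarrow M(Q_1)$ while it is negative for $s>M(Q_1)$, concavity makes $E_{\min}(s)/s$ strictly decreasing there, so this is impossible.
\end{itemize}
Hence $\mu=m$ and $v_n\to v$ in $L^2$.

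\textbf{Step 4 (strong convergence and identification).} With $\mu=m$, we have $r_n\to0$ in $L^2$ and $H^1$-bounded, hence $r_n\to0$ in $L^4$ and $L^6$. Weak lower semicontinuity gives $E_0(v)\le\liminf E_0(v_n)=E_{\min}(m)$, so $v\in\mathcal G(m)$ and $\|\nabla v_n\|_2\to\|\nabla v\|_2$. Thus $v_n\to v$ strongly in $H^1$. Since $\omega\in\mathcal I$, $\mathcal G(m)=\mathcal O_\omega$, so $v=e^{i\theta}P_\omega(\cdot-y)$ for some $\theta,y$. Replacing $x_n$ by $x_n+y$ and setting $\theta_n:=\theta$ gives the claim.
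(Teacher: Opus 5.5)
Your route works and differs from the paper's, but Step 4 contains a false claim that has to be fixed. From $r_n\to0$ in $L^2$ and $\sup_n\|r_n\|_{H^1}<\infty$ you get $r_n\to0$ in $L^p$ only for $2\le p<6$, by Gagliardo--Nirenberg. You do not get it for $p=6$, which is the Sobolev endpoint in $\R^3$. For example, $r_n(x)=n^{1/2}\phi(nx)$ has $\|r_n\|_2=n^{-1}\|\phi\|_2\to0$, while $\|\nabla r_n\|_2=\|\nabla\phi\|_2$ and $\|r_n\|_6=\|\phi\|_6$ stay constant.

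Your passage from $E_0(v_n)\to E_0(v)$ to $\|\nabla v_n\|_2\to\|\nabla v\|_2$ tacitly uses this $L^6$ convergence. The repair is short:
\begin{itemize}
\item Your lower semicontinuity argument does give $E_0(v)=E_{\min}(m)$. Only the quartic term has the wrong sign, and it converges.
\item The Brezis--Lieb splitting then gives $E_0(r_n)=E_0(v_n)-E_0(v)+o_n(1)\to0$.
\item Since $E_0(r_n)\ge\frac12\|\nabla r_n\|_2^2-\frac14\|r_n\|_4^4$ and $\|r_n\|_4\to0$, this forces $\nabla r_n\to0$ in $L^2$.
\end{itemize}
Equivalently, $\frac12\|\nabla v_n\|_2^2+\frac16\|v_n\|_6^6$ converges to the corresponding quantity for $v$, and both terms are weakly lower semicontinuous, so each converges. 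This is exactly the point where the paper uses the small-mass bound \eqref{INED}.

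There is also a smaller issue in Step 3. In your second bullet, the reason you give for $s\mapsto E_{\min}(s)/s$ being strictly decreasing above $M(Q_1)$ is not an argument as written, and the case split is unnecessary. For $M(Q_1)<\mu<m$, concavity between $M(Q_1)$ and $m$ gives $E_{\min}(\mu)\ge\frac{\mu-M(Q_1)}{m-M(Q_1)}E_{\min}(m)>\frac{\mu}{m}E_{\min}(m)$, using $E_{\min}(m)<0$. For $0<\mu\le M(Q_1)$ you have $E_{\min}(\mu)=0>\frac{\mu}{m}E_{\min}(m)$. Applying this to both $\mu$ and $m-\mu$ and adding yields \eqref{suba} at once.

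As for the comparison, the paper works through the $H^1$ profile decomposition of \cite{KMV}, with mass and energy decoupling:
\begin{itemize}
\item non-vanishing reduces to the existence of a nonzero profile;
\item dichotomy is excluded with $J=1$ and \eqref{suba};
\item the remainder's gradient is killed by \eqref{INED}.
\end{itemize}
You replace this with Lions' lemma and Brezis--Lieb. That is more elementary and self-contained: it needs only the stated properties of $E_{\min}$, not the profile decomposition or \eqref{INED}. You also actually prove the strict subadditivity that the paper merely asserts. The paper's version is shorter given the cited machinery. With the fix above, your proof is complete.
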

\begin{proof}
Set $m:=M(P_\omega)$, $e(s):=E_{\min}(s)$. By \cite[Theorem~4.1]{KOPV}, $e$ is continuous and concave, $e=0$ on 
$[0,M(Q_1)]$ and $e<0$ for $s>M(Q_1)$. Since $\omega\in\mathcal I$, $m>M(Q_1)$, hence $e(m)<0$.
Moreover, the above properties of $e$ imply strict subadditivity
\begin{equation}
\label{suba}
e(m)
<
e(m_1)+e(m-m_1),
\qquad
0<m_1<m.
\end{equation}

On the other hand, notice that
\[
E_0(f)+\tfrac{3}{32}M(f)
=
\tfrac12\|\nabla f\|_2^2
+
\tfrac16
\int_{\R^3}
|f|^2
\left(
|f|^2-\tfrac34
\right)^2dx
\]
shows that $\{u_n\}$ is bounded in $H^1$. We apply the $H^1$ profile decomposition from
\cite[Proposition~2.2]{KMV}. After passing to a subsequence, we write
\[
u_n
=
\sum_{j=1}^J
\phi^j(\cdot-x_n^j)
+
w_n^J,
\]
where
\begin{align}
M(u_n)
&=
\sum_{j=1}^J M(\phi^j)
+
M(w_n^J)
+
o_n(1),
\label{pd1}
\\
E_0(u_n)
&=
\sum_{j=1}^J E_0(\phi^j)
+
E_0(w_n^J)
+
o_n(1),
\label{pd2}
\end{align}
and $\lim_{J\to J^*}\limsup_{n\to\infty}\|w_n^J\|_4=0$. In particular, notice that there is at least one nonzero profile. Otherwise
$\|u_n\|_4\to0$, and hence $E_0(u_n)\geq-\frac14\|u_n\|_4^4\to 0$,
contradicting $E_0(u_n)\to e(m)<0$.

Let $\phi^1\neq0$, $m_1:=M(\phi^1)$, $m_r:=m-m_1$. Taking $J=1$ in \eqref{pd1}, $M(w_n^1)\to m_r$. Using \eqref{pd2}, the definition of $e$, and continuity of $e$, we get $e(m)\ge e(m_1)+e(m_r)$. By \eqref{suba}, this is impossible if $m_r>0$. Hence $m_r=0$, $M(\phi^1)=m$, and $M(w_n^1)\to0$.

Since $M(\phi^1)=m$, $E_0(\phi^1)\ge e(m)$. For $n$ large, $M(w_n^1)<M(Q_1)$, so \cite[Theorem~4.1]{KOPV} gives $E_0(w_n^1)\ge0$. The energy decomposition then yields $E_0(\phi^1)=e(m)$ and $E_0(w_n^1)\to0$.

Notice also that for $M(f)\le M(Q_1)$,  we have that (see proof of Theorem~4.1 in \cite{KOPV})

\begin{align}
\label{INED}
	E_0(f)
\ge
\left[
1-
\left(
\tfrac{M(f)}{M(Q_1)}
\right)^{1/2}
\right]
\left[
\tfrac12\|\nabla f\|_2^2
+
\tfrac16\|f\|_6^6
\right].
\end{align}

Applying this to $w_n^1$ shows $\|\nabla w_n^1\|_2\to0$. Therefore, $w_n^1\to0$ strongly in $H^1$. Consequently, $u_n(\cdot+x_n^1)\to\phi^1$ strongly in $H^1$.

Finally, $M(\phi^1)=m$ and $E_0(\phi^1)=E_{\min}(m)$, with $\omega\in\mathcal I$. Hence $\phi^1=e^{i\theta}P_\omega(\cdot-y)$ for some $\theta\in\R$, $y\in\R^3$. This proves the result.
\end{proof}

Throughout the paper we set, for $v\in H^1(\R^3)$,
\[
d_\omega(v):=\inf_{\theta\in\R,\ y\in\R^3}\|e^{-i\theta}v(\cdot+y)-P_\omega\|_{H^1}.
\]

\begin{proposition}
\label{p22}
For every $\eta>0$, there exists $h>0$ such that if $v\in H^1(\R^3)$ satisfies $M(v)=M(P_\omega)$ and $E_0(v)<E_0(P_\omega)+h$, then $d_\omega(v)<\eta$.
\end{proposition}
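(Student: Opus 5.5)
The plan is a standard contradiction argument, reducing Proposition~\ref{p22} to the compactness statement of Lemma~\ref{l23}.

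Suppose the proposition fails. Then there is some $\eta>0$ for which no $h>0$ works. Taking $h=1/n$, we obtain a sequence $\{v_n\}\subset H^1(\R^3)$ with
\[
M(v_n)=M(P_\omega),\qquad E_0(v_n)<E_0(P_\omega)+\tfrac1n,\qquad d_\omega(v_n)\ge\eta .
\]

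The first step is to show that $E_0(v_n)\to E_0(P_\omega)$, i.e.\ that the sequence is minimizing. Since $\omega\in\mathcal I$, we have $E_0(P_\omega)=E_{\min}(M(P_\omega))$. The mass constraint then gives $E_0(v_n)\ge E_{\min}(M(P_\omega))=E_0(P_\omega)$. Combined with the upper bound, this yields
\[
E_0(P_\omega)\le E_0(v_n)<E_0(P_\omega)+1/n ,
\]
so $E_0(v_n)\to E_0(P_\omega)$. Hence the hypotheses of Lemma~\ref{l23} hold exactly.

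Applying Lemma~\ref{l23}, after passing to a subsequence there are $x_n\in\R^3$ and $\theta_n\in\R$ such that
\[
\|e^{-i\theta_n}v_n(\cdot+x_n)-P_\omega\|_{H^1}\to0 .
\]
By the definition of $d_\omega$ as an infimum over $(\theta,y)$, we get $d_\omega(v_n)\le\|e^{-i\theta_n}v_n(\cdot+x_n)-P_\omega\|_{H^1}$. So $d_\omega(v_n)\to0$ along this subsequence, which contradicts $d_\omega(v_n)\ge\eta$ for all $n$.

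The argument has no genuine obstacle, since all the analytic content (profile decomposition, strict subadditivity, uniqueness of the minimizer) is already contained in Lemma~\ref{l23}. The only point needing care is the lower bound $E_0(v_n)\ge E_0(P_\omega)$. It is what turns the one-sided hypothesis $E_0(v)<E_0(P_\omega)+h$ into genuine convergence of the energies, and it relies on the membership $\omega\in\mathcal I$, which identifies $P_\omega$ as a fixed-mass minimizer.
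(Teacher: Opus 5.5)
Your proposal is correct and follows the paper's own proof: a contradiction argument with $h=1/n$, using $\omega\in\mathcal I$ to get $E_0(P_\omega)=E_{\min}(M(P_\omega))$ and hence $E_0(v_n)\to E_0(P_\omega)$, then applying Lemma~\ref{l23} to force $d_\omega(v_n)\to0$ along a subsequence. You state explicitly the lower bound $E_0(v_n)\ge E_0(P_\omega)$, which the paper leaves implicit.
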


\begin{proof}
Suppose not. Then for some $\eta_0>0$, there exist $v_n\in H^1$ with $M(v_n)=M(P_\omega)$, $E_0(v_n)<E_0(P_\omega)+1/n$, but $d_\omega(v_n)\ge\eta_0$. Since $\omega\in\mathcal I$, $E_0(P_\omega)=E_{\min}(M(P_\omega))$, so $E_0(v_n)\to E_0(P_\omega)$. Lemma~\ref{l23} then gives $d_\omega(v_n)\to0$, a contradiction.
\end{proof}

\subsection{Spectral analysis}

Define the action  $S_\omega(u):=E_0(u)+\tfrac{\omega}{2}M(u)$. Note
that the stationary equation \eqref{gs} is equivalent to 
$S_\omega'(P_\omega)=0$. Note also that for $w=w_1+iw_2$, $w_1,w_2\in
H^1(\R^3;\R)$ we can write 
\[
(S_\omega''(P_\omega)w,w)
=
q_+^\omega(w_1)
+
q_-^\omega(w_2),
\]
where $q_\pm^\omega(f):=(L_\pm^\omega f,f)$ and $L_+^\omega=-\Delta+\omega-3P_\omega^2+5P_\omega^4$, 
$L_-^\omega=-\Delta+\omega-P_\omega^2+P_\omega^4$. Both operators are self-adjoint on $L^2(\R^3)$ with domain
$H^2(\R^3)$.

The phase and translation symmetries give
$L_-^\omega P_\omega=0$, $L_+^\omega\partial_jP_\omega=0$ for
$j=1,2,3$.
Moreover, by \cite[Propositions~2.4--2.5]{KOPV},
\begin{equation}
\label{kern}
\ker L_+^\omega
=
\operatorname{span}
\{
\partial_1P_\omega,
\partial_2P_\omega,
\partial_3P_\omega
\}.
\end{equation}

Also, by \cite[Theorem~2.2(ii)]{KOPV} the map $\omega\longmapsto P_\omega$
is real analytic. Differentiating \eqref{gs} with respect to
$\omega$ yields
\begin{equation}
\label{dome}
L_+^\omega\partial_\omega P_\omega
=
-P_\omega,
\end{equation}
and
\begin{equation}
\label{mslp}
(\partial_\omega P_\omega,P_\omega)
=
\tfrac12M'(\omega),
\qquad
M'(\omega)
:=
\tfrac{d}{d\omega}M(P_\omega).
\end{equation}

\begin{proposition}
\label{p41}
Let $\omega\in\mathcal I_+$. Then there exists $c_\omega>0$ such that:
\begin{enumerate}[label=\textup{(\roman*)}]
\item if $g\in H^1(\R^3;\R)$ and $g\perp P_\omega$, then $q_-^\omega(g)\ge c_\omega\|g\|_{H^1}^2$;
\item if $f\in H^1(\R^3;\R)$ and $f\perp P_\omega$ and $f\perp \partial_jP_\omega$ for $j=1,2,3$, then $q_+^\omega(f)\ge c_\omega\|f\|_{H^1}^2$;
\item if $w\in H^1(\R^3;\C)$ and $w\perp P_\omega$, $w\perp iP_\omega$, and $w\perp \partial_jP_\omega$ for $j=1,2,3$, then $(S_\omega''(P_\omega)w,w)\ge c_\omega\|w\|_{H^1}^2$.
\end{enumerate}
\end{proposition}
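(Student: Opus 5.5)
We prove the three items in order. In each case we first get strict positivity on the constraint space and then upgrade it to $H^1$ coercivity.

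\emph{Item (i).} Since $P_\omega>0$ and $L_-^\omega P_\omega=0$, $P_\omega$ is the ground state of $L_-^\omega$. Hence $0$ is a simple eigenvalue at the bottom of the spectrum, and $q_-^\omega(g)>0$ for every $g\perp P_\omega$ with $g\neq0$. The essential spectrum of $L_\pm^\omega$ is $[\omega,\infty)$, because the potentials $-P_\omega^2+P_\omega^4$ and $-3P_\omega^2+5P_\omega^4$ decay at infinity. So $\inf\{q_-^\omega(g):\|g\|_2=1,\ g\perp P_\omega\}$ is either an eigenvalue or $\omega$, and in both cases it is a number $\lambda>0$. This gives $q_-^\omega(g)\ge\lambda\|g\|_2^2$.

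\emph{Upgrade to $H^1$.} We interpolate with the trivial bound $q_\pm^\omega(g)\ge\|\nabla g\|_2^2-C_\omega\|g\|_2^2$, which holds since $P_\omega\in L^\infty$. Writing $q=\delta q+(1-\delta)q$ with $\delta$ small gives $q\ge\delta\|\nabla g\|_2^2+\bigl((1-\delta)\lambda-\delta C_\omega\bigr)\|g\|_2^2\gtrsim\|g\|_{H^1}^2$. The same device handles item (ii) once $L^2$ positivity is known there.

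\emph{Item (ii).} This is the main step. $L_+^\omega$ has exactly one negative eigenvalue. This follows because $P_\omega$ minimizes energy at fixed mass, with $\omega\in\mathcal I$, so $S_\omega''(P_\omega)$ is nonnegative on the tangent space $\{f\perp P_\omega\}$; combined with $q_+^\omega(P_\omega)<0$ (a short computation using \eqref{gs}), this gives Morse index $1$. The kernel is identified in \eqref{kern}. For $f\perp P_\omega$, and using \eqref{dome}, we have $(L_+^\omega\partial_\omega P_\omega,\partial_\omega P_\omega)=-(P_\omega,\partial_\omega P_\omega)=-\tfrac12M'(\omega)<0$ on $\mathcal I_+$. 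We then run the standard Weinstein argument. Suppose $\mu:=\inf\{q_+^\omega(f):\|f\|_2=1,\ f\perp P_\omega,\ f\perp\partial_jP_\omega\}$ were $\le0$.
\begin{itemize}
\item If $\mu<0$: the minimizer together with the negative direction $\partial_\omega P_\omega$ spans a two-dimensional negative subspace modulo the kernel, contradicting Morse index one.
\item If $\mu=0$: the infimum is attained, since it lies below the essential spectrum $\omega$, by a minimizer $f_*$. The Lagrange condition reads $L_+^\omega f_*=aP_\omega+\sum b_j\partial_jP_\omega$. Pairing with $\partial_kP_\omega$ gives $b_j=0$. Then $f_*=-a\,\partial_\omega P_\omega+\kappa$ with $\kappa\in\ker L_+^\omega$. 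The orthogonality $f_*\perp P_\omega$ forces $aM'(\omega)/2=0$, so $a=0$, hence $f_*\in\ker$, which contradicts $f_*\perp\partial_jP_\omega$.
\end{itemize}
Thus $\mu>0$, and the interpolation above finishes (ii). The delicate point is justifying Morse index one. An alternative route is the standard min-max alternative: if $q_+^\omega\le0$ on some nonzero $f\perp P_\omega$ orthogonal to the kernel, build a two-dimensional negative subspace together with the ground state of $L_+^\omega$, contradicting the variational characterization.

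\emph{Item (iii).} Write $w=w_1+iw_2$. Since $P_\omega$ and $\partial_jP_\omega$ are real, the conditions $w\perp P_\omega$ and $w\perp\partial_jP_\omega$ mean $w_1\perp P_\omega,\partial_jP_\omega$, and the condition $w\perp iP_\omega$ means $w_2\perp P_\omega$. So (iii) follows by adding (i) and (ii) through the splitting $(S_\omega''(P_\omega)w,w)=q_+^\omega(w_1)+q_-^\omega(w_2)$. We take $c_\omega$ to be the minimum of the two constants.
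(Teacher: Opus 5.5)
Your argument is essentially the paper's: nonnegativity of $q_+^\omega$ on $\{f\perp P_\omega\}$ from the fixed-mass minimization, exclusion of a zero direction via the Lagrange condition together with \eqref{dome} and $M'(\omega)>0$, and an essential-spectrum (compactness) argument to upgrade to $H^1$ coercivity. The paper takes the Lagrange multiplier on all of $\{f\perp P_\omega\}$, which gives $L_+^\omega f=cP_\omega$ directly. You minimize on the smaller space and then kill the $b_j$ by pairing with $\partial_kP_\omega$. Both routes close.

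One assertion in your Morse-index paragraph is false, although nothing in your proof depends on it. The claim that $q_+^\omega(P_\omega)<0$ is not true in general. From \eqref{gs}, $L_+^\omega P_\omega=-2P_\omega^3+4P_\omega^5$, so
\[
q_+^\omega(P_\omega)=-2\int_{\R^3}P_\omega^4\,(1-2P_\omega^2)\,dx .
\]
Combined with the Pohozaev identity, this equals $4\bigl(\omega\|P_\omega\|_2^2-\|\nabla P_\omega\|_2^2\bigr)$. As $\omega\uparrow 3/16$, $P_\omega$ is close to $\sqrt{3/4}$ on a ball of radius $R\to\infty$. There $1-2P_\omega^2\approx-\tfrac12$ on the bulk, and equivalently the mass term of order $R^3$ dominates the gradient term of order $R^2$. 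So $q_+^\omega(P_\omega)>0$ precisely on the interval $(\omega_0,3/16)\subset\mathcal I_+$ of Remark~\ref{Re11}.

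This is harmless because your case analysis only uses that $L_+^\omega$ has at most one negative eigenvalue. That follows from nonnegativity on $\{f\perp P_\omega\}$ alone. In fact the case $\mu<0$ is ruled out directly by that nonnegativity, since your constraint set lies inside $\{f\perp P_\omega\}$; the two-dimensional-subspace detour is unnecessary. If you want Morse index exactly one on $\mathcal I_+$, the negative direction is $\partial_\omega P_\omega$, which you already computed: $q_+^\omega(\partial_\omega P_\omega)=-\tfrac12M'(\omega)<0$.

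The alternative route sketched at the end of (ii) should be dropped. With only $q_+^\omega(f)\le0$ and no control of the cross term with the ground state of $L_+^\omega$, it does not produce a negative two-dimensional subspace.
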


\begin{proof}
First, since $L_-^\omega P_\omega=0$ and $P_\omega>0$, a simple computation gives
\[
q_-^\omega(g)
=
\int_{\R^3}
P_\omega^2
\left|
\nabla\left(\tfrac{g}{P_\omega}\right)
\right|^2dx
\geq0.
\]
In particular, where $P_\omega>0$, $\ker L_-^\omega=\operatorname{span}\{P_\omega\}$. With this in hand, a standard compactness argument yields $C>0$ such that if $f\in H^1(\R^3;\R)$ and $f\perp P_\omega$, then $q_-^\omega(f)\ge C\|f\|_{H^1}^2$. This proves (i).

Next we prove (ii). We first show nonnegativity on the tangent space to the mass constraint. Let $f\in H^1(\R^3;\R)$ satisfy $f\perp P_\omega$ and define
\[
\gamma(s)
:=
\tfrac{\|P_\omega\|_2}
{\|P_\omega+sf\|_2}
(P_\omega+sf).
\]
This curve is well-defined for small $s$, and satisfies $M(\gamma(s))=M(P_\omega)$, $\gamma(0)=P_\omega$, $\gamma'(0)=f$. Since $\omega\in\mathcal I$, $P_\omega$ minimizes $E_0$ at fixed mass, so $S_\omega(\gamma(s))\ge S_\omega(P_\omega)$, which implies $\left.\frac{d^2}{ds^2}S_\omega(\gamma(s))\right|_{s=0}\ge0$.  Thus, the chain rule gives $q_+^\omega(f)\ge0$.

We next exclude a zero direction after removing translations. Suppose $f\perp P_\omega$, $f\perp\partial_jP_\omega$ for $j=1,2,3$, and $q_+^\omega(f)=0$. Let $\mathcal T_\omega:=\{h\in H^1(\R^3;\R):h\perp P_\omega\}$. Since $q_+^\omega\ge0$ on $\mathcal T_\omega$, one can show that
\[
\langle L_+^\omega f,h\rangle=0
\qquad
\text{for every }h\in\mathcal T_\omega.
\]
Thus $L_+^\omega f=cP_\omega$ for some $c\in\R$. Using \eqref{dome}, $L_+^\omega(f+c\partial_\omega P_\omega)=0$. By \eqref{kern}, $f+c\partial_\omega P_\omega\in\operatorname{span}\{\partial_jP_\omega:1\le j\le3\}$. Since $\partial_\omega P_\omega$ is radial, it is orthogonal to each $\partial_jP_\omega$; hence the left-hand side is also orthogonal to the kernel, so $f=-c\partial_\omega P_\omega$. Taking the inner product with $P_\omega$ and using \eqref{mslp}, $0=(f,P_\omega)=-\frac{c}{2}M'(\omega)$. Since $\omega\in\mathcal I_+$, $M'(\omega)>0$, so $c=0$ and thus $f=0$.

Therefore, we have shown that if $f\neq0$ satisfies $f\perp P_\omega$ and $f\perp\partial_jP_\omega$ for $j=1,2,3$, then $q_+^\omega(f)>0$. A standard compactness argument now implies (ii). 

Finally, combining (i) and (ii) immediately yields (iii).
\end{proof}

\begin{lemma}
\label{l45}
There exist $\delta_\omega,C_\omega>0$ such that whenever $\|\phi-P_\omega\|_{H^1}<\delta_\omega$, there exist unique parameters $\Theta(\phi)\in\R$, $Y(\phi)\in\R^3$ in a fixed neighborhood of $(0,0)$ for which $r(\phi):=e^{-i\Theta(\phi)}\phi(\cdot+Y(\phi))-P_\omega$ satisfies $r(\phi)\perp iP_\omega$ and $r(\phi)\perp\partial_jP_\omega$ ($j=1,2,3$). The maps $\Theta,Y$ are $C^1$, and
\[
|\Theta(\phi)|+|Y(\phi)|+\|r(\phi)\|_{H^1}\le C_\omega\|\phi-P_\omega\|_{H^1}.
\]

Consequently, after decreasing $\delta_\omega$ if necessary, if $d_\omega(v)<\delta_\omega$, then there exist $\theta\in\R$, $y\in\R^3$ such that $r:=e^{-i\theta}v(\cdot+y)-P_\omega$ satisfies $r\perp iP_\omega$ and $r\perp\partial_jP_\omega$ ($j=1,2,3$), and
\[
d_\omega(v)\le\|r\|_{H^1}\le C_\omega d_\omega(v).
\]

If $t\mapsto v(t)$ is continuous from an interval $I$ into $H^1$ and $d_\omega(v(t))<\delta_\omega$ for $t\in I$, then the modulation parameters may be chosen continuously on $I$ after fixing one phase branch.
\end{lemma}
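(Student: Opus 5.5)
I would prove the first part with the implicit function theorem applied to
\[
F:\R\times\R^3\times H^1(\R^3;\C)\to\R^4,\qquad
F(\theta,y,\phi):=\Bigl(\bigl(e^{-i\theta}\phi(\cdot+y)-P_\omega,\,iP_\omega\bigr),\ \bigl(e^{-i\theta}\phi(\cdot+y)-P_\omega,\,\partial_jP_\omega\bigr)_{j=1,2,3}\Bigr).
\]
To avoid differentiating $\phi(\cdot+y)$ in $y$, which is not $C^1$ for $\phi\in H^1$ only, I would move the group action onto the smooth test functions. The real inner product is invariant under phase and translation, so
\[
F(\theta,y,\phi)=\bigl((\phi,\,e^{i\theta}(iP_\omega)(\cdot-y)),\ (\phi,\,e^{i\theta}(\partial_jP_\omega)(\cdot-y))\bigr)-\bigl((P_\omega,iP_\omega),(P_\omega,\partial_jP_\omega)\bigr).
\]
The subtracted constant vector is actually zero, because $(P_\omega,iP_\omega)=0$ and $(P_\omega,\partial_jP_\omega)=\frac12\int\partial_j(P_\omega^2)=0$. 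Since $P_\omega\in H^2$, and in fact smooth and exponentially decaying by elliptic regularity, the map $F$ is $C^1$ jointly, and it is linear in $\phi$. We have $F(0,0,P_\omega)=0$.

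Next I would compute the Jacobian in $(\theta,y)$ at $(0,0,P_\omega)$:
\[
\partial_\theta F=\bigl((P_\omega,-P_\omega)\ \text{from } (\phi,\partial_\theta e^{i\theta}iP_\omega)=(P_\omega,-P_\omega),\ (P_\omega,i\partial_jP_\omega)\bigr)=(-\|P_\omega\|_2^2,0,0,0),
\]
\[
\partial_{y_k}F=\bigl(-(P_\omega,i\partial_kP_\omega),\,-(P_\omega,\partial_k\partial_jP_\omega)\bigr)=\bigl(0,\ \|\partial_kP_\omega\|_2\ \text{-type entries}\bigr).
\]
For the last entries, integration by parts gives $-(P_\omega,\partial_k\partial_jP_\omega)=(\partial_kP_\omega,\partial_jP_\omega)=\frac13\|\nabla P_\omega\|_2^2\,\delta_{jk}$, using radial symmetry. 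The Jacobian is therefore diagonal with nonzero entries, hence invertible. The implicit function theorem then gives $\delta_\omega>0$ and a neighbourhood $U$ of $(0,0)$ with the following property: for $\|\phi-P_\omega\|_{H^1}<\delta_\omega$ there is a unique $(\Theta(\phi),Y(\phi))\in U$ with $F=0$, depending $C^1$ on $\phi$, and satisfying $|\Theta|+|Y|\le C\|\phi-P_\omega\|_{H^1}$ by the Lipschitz bound on $C^1$ maps.

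For $r$, I would write
\[
r=e^{-i\Theta}(\phi-P_\omega)(\cdot+Y)+\bigl(e^{-i\Theta}P_\omega(\cdot+Y)-P_\omega\bigr).
\]
The first term has $H^1$ norm $\|\phi-P_\omega\|_{H^1}$. The second is bounded by $C(|\Theta|+|Y|)$, since $P_\omega\in H^2$ makes $y\mapsto P_\omega(\cdot+y)$ Lipschitz into $H^1$.

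For the consequence, suppose $d_\omega(v)<\delta'$ with $\delta'$ small. I would pick $(\theta_1,y_1)$ with $\|e^{-i\theta_1}v(\cdot+y_1)-P_\omega\|_{H^1}\le 2d_\omega(v)<\delta_\omega$, and apply the first part to $\phi=e^{-i\theta_1}v(\cdot+y_1)$. The parameters $\theta=\theta_1+\Theta$ and $y=y_1+Y$ then give $\|r\|_{H^1}\le 2C_\omega d_\omega(v)$. The lower bound $d_\omega(v)\le\|r\|_{H^1}$ is immediate from the definition of $d_\omega$ as an infimum.

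The continuity statement is the step I expect to be the main obstacle. The near-optimal $(\theta_1,y_1)$ need not depend continuously on $t$, so the argument has to rely on the local uniqueness in $U$. Around each $t_0$ I would fix $(\theta_1,y_1)$ adapted to $v(t_0)$. For $t$ near $t_0$, the function $e^{-i\theta_1}v(t,\cdot+y_1)$ stays in the $\delta_\omega$-ball, and $\Theta,Y$ compose continuously with it.

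To patch these local choices I need overlapping choices to agree modulo $2\pi$ in the phase. Suppose two admissible $(\theta,y)$ and $(\theta',y')$ both give $\|r\|_{H^1}$ small. Then $e^{-i(\theta'-\theta)}P_\omega(\cdot+y'-y)$ is close to $P_\omega$, which forces $y'\approx y$ and $\theta'\approx\theta \pmod{2\pi}$. This holds because $P_\omega$ decays, so its translates separate, and $\|e^{i\alpha}P_\omega-P_\omega\|_{H^1}=|e^{i\alpha}-1|\,\|P_\omega\|_{H^1}$. After shrinking $\delta_\omega$ so that this closeness lands inside $U$, local uniqueness in $U$ makes the two choices coincide up to $2\pi\mathbb Z$ in $\theta$. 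A standard connectedness and covering argument on the interval $I$, lifting the phase continuously after fixing one branch, then yields continuous $\theta(t)$ and $y(t)$.
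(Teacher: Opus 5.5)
Your proposal is correct and follows the same route as the paper, which omits the details and defers to the standard implicit-function-theorem modulation lemma of \cite{LeCoz}. The shared steps are the orthogonality map with invertible diagonal Jacobian $\mathrm{diag}\bigl(-\|P_\omega\|_2^2,\tfrac13\|\nabla P_\omega\|_2^2 I_3\bigr)$, then local uniqueness (modulo $2\pi$ in the phase) to glue the parameters continuously in time. Moving the group action onto the smooth test functions, so that $F$ is genuinely $C^1$ for $\phi\in H^1$, is the right technical point; the only nit is that choosing $\|e^{-i\theta_1}v(\cdot+y_1)-P_\omega\|_{H^1}\le 2d_\omega(v)$ needs $d_\omega(v)>0$, and the case $d_\omega(v)=0$ is trivial because $v$ then lies on the closed orbit.
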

\begin{proof}
The proof follows along the same lines as Lemma 6.2 in the Appendix of \cite{LeCoz}. We omit the details.
\end{proof}

\begin{proposition}
\label{stab}
Let $\omega\in\mathcal I_+$. Then there exist $h_\omega,C_\omega>0$ such that if $v\in H^1(\R^3)$ satisfies $M(v)=M(P_\omega)$ and $E_0(v)<E_0(P_\omega)+h_\omega$, then
\begin{equation}
\label{coer}
d_\omega(v)^2
\leq
C_\omega
\left[
E_0(v)-E_0(P_\omega)
\right].
\end{equation}
\end{proposition}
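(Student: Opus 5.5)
The proof combines the qualitative stability of Proposition~\ref{p22}, the modulation Lemma~\ref{l45}, and the coercivity of Proposition~\ref{p41}(iii), via a Taylor expansion of the action $S_\omega$ around $P_\omega$.

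\emph{Reduction to the modulated remainder.} Fix $\delta_\omega$ from Lemma~\ref{l45}. Apply Proposition~\ref{p22} with $\eta=\delta_\omega'$, where $\delta_\omega'\le\delta_\omega$ is a small parameter to be chosen. This yields $h_\omega>0$ such that $M(v)=M(P_\omega)$ and $E_0(v)<E_0(P_\omega)+h_\omega$ imply $d_\omega(v)<\delta_\omega'$. Lemma~\ref{l45} then gives $\theta,y$ with $r:=e^{-i\theta}v(\cdot+y)-P_\omega$ satisfying $r\perp iP_\omega$, $r\perp\partial_jP_\omega$, and $d_\omega(v)\le\|r\|_{H^1}\le C_\omega\delta_\omega'$. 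Since $E_0$ and $M$ are invariant under phase and translation, we may replace $v$ by $P_\omega+r$.

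\emph{Taylor expansion.} Since $S_\omega'(P_\omega)=0$ and $M(P_\omega+r)=M(P_\omega)$,
\[
E_0(v)-E_0(P_\omega)=S_\omega(P_\omega+r)-S_\omega(P_\omega)=\tfrac12(S_\omega''(P_\omega)r,r)+O(\|r\|_{H^1}^3).
\]
The cubic remainder is controlled because the nonlinearities $|u|^4$ and $|u|^6$ are smooth polynomial functionals on $H^1(\R^3)\subset L^4\cap L^6$. By Sobolev, the remainder is $O(\|r\|_{H^1}^3+\|r\|_{H^1}^6)$, which is $O(\|r\|_{H^1}^3)$ for $\|r\|_{H^1}\le1$.

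\emph{Handling the missing orthogonality.} Proposition~\ref{p41}(iii) also requires $r\perp P_\omega$, which the modulation does not impose. The mass constraint supplies this approximately: $M(P_\omega+r)=M(P_\omega)$ gives $2(r,P_\omega)=-\|r\|_2^2$, so $|(r,P_\omega)|\lesssim\|r\|_{H^1}^2$. Decompose $r=aP_\omega+\tilde r$ with $a=(r,P_\omega)/\|P_\omega\|_2^2=O(\|r\|_{H^1}^2)$. Then $\tilde r$ is orthogonal to $P_\omega$, to $iP_\omega$ (since $(P_\omega,iP_\omega)=0$), and to the $\partial_jP_\omega$ (since $(P_\omega,\partial_jP_\omega)=0$). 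Proposition~\ref{p41}(iii) gives $(S_\omega''\tilde r,\tilde r)\ge c_\omega\|\tilde r\|_{H^1}^2$. Bilinearity and boundedness of $S_\omega''(P_\omega)$ then give
\[
(S_\omega''r,r)\ge c_\omega\|r\|_{H^1}^2-C\|r\|_{H^1}^3.
\]

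\emph{Conclusion.} We obtain $E_0(v)-E_0(P_\omega)\ge\frac{c_\omega}{2}\|r\|_{H^1}^2-C\|r\|_{H^1}^3\ge\frac{c_\omega}{4}\|r\|_{H^1}^2$ once $\|r\|_{H^1}\le C_\omega\delta_\omega'$ is small enough, which fixes the choice of $\delta_\omega'$. Combining with $d_\omega(v)\le\|r\|_{H^1}$ yields \eqref{coer}.

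The main obstacle is the missing $P_\omega$-orthogonality. It is resolved by the observation that the mass constraint makes the component along $P_\omega$ quadratically small, so it only affects the estimate at cubic order.
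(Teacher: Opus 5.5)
Your proposal is correct and follows essentially the same route as the paper: you reduce to small $d_\omega(v)$ via Proposition~\ref{p22}, modulate with Lemma~\ref{l45}, and use the mass constraint to split $r=aP_\omega+\tilde r$ with $a=O(\|r\|_{H^1}^2)$. You then apply Proposition~\ref{p41}(iii) to $\tilde r$ and conclude by Taylor-expanding $S_\omega$ around $P_\omega$. The only difference is that you make the remainders explicit as $O(\|r\|_{H^1}^3)$ where the paper writes $o(\|r\|_{H^1}^2)$, and that sharper form is harmless.
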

\begin{proof}
We first prove the estimate when $d_\omega(v)$ is sufficiently small. By Lemma~\ref{l45}, there exist $\theta\in\R$ and $y\in\R^3$ such that $r:=e^{-i\theta}v(\cdot+y)-P_\omega$ satisfies $r\perp iP_\omega$, $r\perp\partial_jP_\omega$ ($j=1,2,3$), and $d_\omega(v)\le\|r\|_{H^1}\lesssim_\omega d_\omega(v)$.

Since $M(v)=M(P_\omega)$, we have $2(r,P_\omega)+\|r\|_2^2=0$. Set
\[
a:=\tfrac{(r,P_\omega)}{m_\omega}=-\tfrac{\|r\|_2^2}{2m_\omega},\qquad z:=r-aP_\omega.
\]
Then $z\perp P_\omega,iP_\omega,\partial_jP_\omega$ ($j=1,2,3$), with
$|a|\lesssim_\omega\|r\|_{H^1}^2$, so
$\|z-r\|_{H^1}\lesssim_\omega\|r\|_{H^1}^2$. Note that $z$ satisfies
all the conditions of Proposition~\ref{p41}(iii); moreover, for
$\|r\|_{H^1}$ small, $\|z\|_{H^1}\ge\frac12\|r\|_{H^1}$. 

Then, Proposition~\ref{p41} gives $(S_\omega''(P_\omega)z,z)\ge c_\omega\|z\|_{H^1}^2$. Since $r=z+aP_\omega$ and $a=O(\|r\|_{H^1}^2)$, we get $(S_\omega''(P_\omega)r,r)\ge c_\omega\|r\|_{H^1}^2$ after shrinking $\|r\|_{H^1}$ if needed.

Using $S_\omega'(P_\omega)=0$, Taylor's formula yields
\[
S_\omega(P_\omega+r)-S_\omega(P_\omega)
=
\frac12(S_\omega''(P_\omega)r,r)+o(\|r\|_{H^1}^2).
\]
Thus $S_\omega(P_\omega+r)-S_\omega(P_\omega)\gtrsim_\omega\|r\|_{H^1}^2$. Since $M(P_\omega+r)=M(P_\omega)$, the mass terms cancel, so $E_0(v)-E_0(P_\omega)\gtrsim_\omega\|r\|_{H^1}^2\ge d_\omega(v)^2$.

Finally, choose $\eta_\omega>0$ small enough that all local estimates hold whenever $d_\omega(v)<\eta_\omega$. Proposition~\ref{p22} with $\eta=\eta_\omega$ yields $h_\omega>0$ such that $M(v)=M(P_\omega)$ and $E_0(v)<E_0(P_\omega)+h_\omega$ imply $d_\omega(v)<\eta_\omega$. This proves the proposition.
\end{proof}

\section{Soliton Dynamics}
\label{SoD}

The main purpose of this section is to prove the qualitative soliton decomposition of Theorem~\ref{t12}. For this, we establish several lemmas.

\begin{lemma}
\label{l31}
Let $u^\eps$ be the global solution
to \eqref{eqn}. Then
\[
\sup_{t\in\R}
\|\nabla u^\eps(t)\|_2^2
\lesssim
\eps
\]
uniformly for $0<\eps\leq1$. Moreover, $\sup_{t\in\R}\|p^\eps(t)\|_1\lesssim1$.
\end{lemma}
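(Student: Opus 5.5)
The plan is to combine conservation of the scaled energy and mass from Proposition~\ref{wp} with the coercivity identity used in the proof of Lemma~\ref{l23}:
\[
E_0(f)+\tfrac{3}{32}M(f)=\tfrac12\|\nabla f\|_2^2+\tfrac16\int_{\R^3}|f|^2\bigl(|f|^2-\tfrac34\bigr)^2dx .
\]
Its scaled version reads
\[
E^\eps_V(u)+\tfrac{3}{32}M^\eps(u)=\eps^{-3}\int_{\R^3}\Bigl[\tfrac{\eps^2}{2}|\nabla u|^2+\tfrac12V|u|^2+\tfrac16|u|^2\bigl(|u|^2-\tfrac34\bigr)^2\Bigr]dx .
\]
Since we have normalized $V\ge0$, every term on the right is nonnegative. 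This gives, for all $t$,
\[
\tfrac12\eps^{-1}\|\nabla u^\eps(t)\|_2^2\le E^\eps_V(u^\eps(0))+\tfrac{3}{32}m_\omega .
\]
So it suffices to bound the initial scaled energy uniformly in $\eps$.

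For the initial datum I change variables $x=x_0+\eps y$. The Jacobian $\eps^3$ cancels the prefactor $\eps^{-3}$, and the gradient of the modulated profile is
\[
\eps^{-1}\,e^{i(\cdot)}\bigl(i\xi_0P_\omega+\nabla P_\omega\bigr)(y).
\]
Since $P_\omega$ is real, the cross term vanishes and
\[
E^\eps_V(u^\eps(0))=\tfrac12|\xi_0|^2m_\omega+E_0(P_\omega)+\tfrac12\int_{\R^3} V(x_0+\eps y)P_\omega(y)^2\,dy .
\]
By \eqref{H1}, $V$ has at most quadratic growth, so $V(x_0+\eps y)\lesssim 1+|x_0|^2+|y|^2$ for $\eps\le1$. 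The last integral is then bounded because $P_\omega$ decays exponentially, so $|y|P_\omega\in L^2$. This gives $E^\eps_V(u^\eps(0))\le C(\omega,x_0,\xi_0)$ uniformly in $0<\eps\le1$, and hence $\sup_t\|\nabla u^\eps(t)\|_2^2\lesssim\eps$.

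For the momentum bound, Cauchy--Schwarz gives
\[
\|p^\eps(t)\|_1\le 2\eps^{-2}\|u^\eps(t)\|_2\|\nabla u^\eps(t)\|_2 .
\]
Mass conservation \eqref{mass} gives $\|u^\eps(t)\|_2=(\eps^3m_\omega)^{1/2}$, and the gradient bound gives $\|\nabla u^\eps(t)\|_2\lesssim\eps^{1/2}$. Hence $\|p^\eps(t)\|_1\lesssim\eps^{-2}\eps^{3/2}\eps^{1/2}=1$. As a byproduct this also yields \eqref{e89}.

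There is no serious obstacle here. The only point needing care is the initial potential-energy term, which depends on the quadratic growth of $V$ and the decay of $P_\omega$; this is exactly where the constant becomes dependent on $x_0$.
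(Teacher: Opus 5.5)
Your proof is correct and follows the paper's argument: the same energy inequality $\frac{1}{2\eps}\|\nabla u^\eps(t)\|_2^2\le E^\eps_V(u^\eps(0))+\frac{3}{32}m_\omega$, the same change-of-variables computation of the initial energy using the quadratic growth of $V$ and the decay of $P_\omega$, and the same Cauchy--Schwarz bound for $p^\eps$. You derive the first inequality from the identity $E_0(f)+\frac{3}{32}M(f)=\frac12\|\nabla f\|_2^2+\frac16\int|f|^2(|f|^2-\frac34)^2$ (in scaled form) rather than from \eqref{INED} as the paper cites; yours is the correct justification, since \eqref{INED} requires $M(f)\le M(Q_1)$ and cannot produce the $\frac{3}{32}m_\omega$ term.
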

\begin{proof}
From \eqref{INED}, conservation of mass and energy give (recall that $V\geq 0$)
\[
\tfrac{1}{2\eps}\|\nabla u^\eps(t)\|_2^2
\leq
E^\eps_{V}(u^\eps(0))
+
\tfrac{3}{32}m_\omega.
\]
Notice also that
\[
E^\eps_{V}(u^\eps(0))
=
E_0(P_\omega)
+
\tfrac12m_\omega|\xi_0|^2
+
\tfrac12
\int_{\R^3}
V(x_0+\eps y)P_\omega^2(y)\,dy.
\]
Since $D^2V\in L^\infty$, $V$ has at most quadratic growth, so $|V(x_0+\eps y)|\lesssim1+|y|^2$ for $0<\eps\le1$, with constant depending on $x_0,V$. The exponential decay of $P_\omega$ yields $E^\eps_{V}(u^\eps(0))\lesssim1$, hence $\|\nabla u^\eps(t)\|_2^2\lesssim\eps$.

Finally, by definition of $p^\eps$, conservation of mass, and the preceding estimate, $\|p^\eps(t)\|_1\le2\eps^{-2}\|u^\eps(t)\|_2\|\nabla u^\eps(t)\|_2\lesssim1$.
\end{proof}

\begin{lemma}
\label{l32}
Let $f\in C^2(\R^3)$ with $\|D^2f\|_{L^\infty}<\infty$. Then, as $\eps\to0$,
\[
\int_{\R^3}f(y+\eps x)P_\omega^2(x)\,dx=m_\omega f(y)+O(\eps^2),
\]
uniformly in $y\in\R^3$.
\end{lemma}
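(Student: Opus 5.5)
The approach is a second-order Taylor expansion of $f$ around $y$. The first-order term will vanish because $P_\omega$ is radial, and the remainder will be controlled by a second moment of $P_\omega^2$.

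For fixed $y$ and $x$, Taylor's formula with integral remainder gives
\[
f(y+\eps x)=f(y)+\eps\nabla f(y)\cdot x+R_\eps(y,x),
\qquad
|R_\eps(y,x)|\le\tfrac12\eps^2\|D^2f\|_{L^\infty}|x|^2 .
\]
The bound on the remainder uses only the global bound on $D^2f$, so it holds uniformly in $y$.

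Multiplying by $P_\omega^2(x)$ and integrating, the zeroth-order term gives $m_\omega f(y)$. The first-order term is $\eps\nabla f(y)\cdot\int_{\R^3}xP_\omega^2(x)\,dx$, and this integral is zero because $P_\omega$ is radial, so $xP_\omega^2(x)$ is odd.

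The one point that needs checking is that all these integrals converge absolutely, since $f$ may grow quadratically. This is also where the first-order term could otherwise cause trouble: $\nabla f(y)$ is not bounded in $y$, but it multiplies an integral that is exactly zero, so uniformity in $y$ is not lost. Convergence follows from the exponential decay of $P_\omega$ (from \cite{KOPV}, as already used in Lemma~\ref{l31}), which gives $\int_{\R^3}(1+|x|^2)P_\omega^2\,dx<\infty$. Since $|f(y+\eps x)|\lesssim_y 1+|x|^2$, every term above is integrable.

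The remainder then satisfies
\[
\Bigl|\int_{\R^3}R_\eps(y,x)P_\omega^2(x)\,dx\Bigr|
\le\tfrac12\eps^2\|D^2f\|_{L^\infty}\int_{\R^3}|x|^2P_\omega^2(x)\,dx ,
\]
which is $O(\eps^2)$ uniformly in $y\in\R^3$. This proves the claim.
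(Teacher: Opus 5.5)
Your proposal is correct and follows the paper's proof essentially verbatim: a second-order Taylor expansion around $y$, the first-order term vanishing because $\int xP_\omega^2\,dx=0$ by radial symmetry, and the remainder bounded by $\frac12\eps^2\|D^2f\|_{L^\infty}\int|x|^2P_\omega^2\,dx$, which is finite by exponential decay. Your extra check that every integral converges absolutely, despite the possible quadratic growth of $f$, is a useful detail that the paper leaves implicit.
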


\begin{proof}
Taylor's formula gives $f(y+\eps x)=f(y)+\eps\nabla f(y)\cdot x+R_\eps(x,y)$, with $|R_\eps|\le\frac12\eps^2\|D^2f\|_\infty|x|^2$. Since $P_\omega$ is radial, $\int xP_\omega^2=0$. Thus
\[
\left|\int f(y+\eps x)P_\omega^2-m_\omega f(y)\right|
\lesssim\eps^2\int|x|^2P_\omega^2,
\]
and the last integral is finite by exponential decay.
\end{proof}

\begin{lemma}
\label{l33}
Let $u^\eps$ be the global solution to \eqref{eqn}. Then
\[
E^\eps_{V}(u^\eps(t))
=
E_0(P_\omega)
+
m_\omega\mathcal H(t)
+
O(\eps^2)
\]
uniformly for $t\in\R$.
\end{lemma}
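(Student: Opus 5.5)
By conservation of energy (Proposition~\ref{wp}) and conservation of the classical Hamiltonian $\mathcal H(t)=\mathcal H(0)$ along \eqref{flow}, it suffices to prove the statement at $t=0$. The claim then reduces to
\[
E^\eps_V(u^\eps(0))=E_0(P_\omega)+m_\omega\mathcal H(0)+O(\eps^2),
\qquad
\mathcal H(0)=\tfrac12|\xi_0|^2+\tfrac12V(x_0).
\]
Once this holds, the error term is a constant depending only on $\eps$, $x_0$, $\xi_0$, $V$ and $\omega$, and it is trivially uniform in $t$.

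First I will compute $E^\eps_V(u^\eps(0))$ exactly by the change of variables $x=x_0+\eps y$. The Jacobian $\eps^3$ cancels the prefactor $\eps^{-3}$. Since $P_\omega$ is real, the gradient of the initial datum is
\[
\nabla u^\eps(0,x)=e^{\frac{i}{\eps}\xi_0\cdot(x-x_0)+i\theta_0}\,\eps^{-1}\bigl(i\xi_0P_\omega+\nabla P_\omega\bigr)\!\left(\tfrac{x-x_0}{\eps}\right).
\]
Hence $\tfrac{\eps^2}{2}|\nabla u^\eps(0)|^2$ becomes $\tfrac12|\nabla P_\omega|^2+\tfrac12|\xi_0|^2P_\omega^2$, and the cross term vanishes because $P_\omega$ is real. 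The nonlinear terms $-\tfrac14|u|^4+\tfrac16|u|^6$ reproduce exactly those of $E_0(P_\omega)$. This yields the identity already displayed in the proof of Lemma~\ref{l31}:
\[
E^\eps_{V}(u^\eps(0))
=E_0(P_\omega)+\tfrac12m_\omega|\xi_0|^2+\tfrac12\int_{\R^3}V(x_0+\eps y)P_\omega^2(y)\,dy .
\]

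Next I will apply Lemma~\ref{l32} with $f=V$ and $y=x_0$. This is legitimate because $V\in C^2$ and $D^2V\in L^\infty$ by \eqref{H1}. It gives
\[
\int_{\R^3} V(x_0+\eps y)P_\omega^2(y)\,dy=m_\omega V(x_0)+O(\eps^2).
\]
The $O(\eps)$ term drops out since $\int yP_\omega^2=0$ by radial symmetry. Combining the pieces gives $E_0(P_\omega)+m_\omega\mathcal H(0)+O(\eps^2)$.

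Finally, conservation of $E^\eps_V$ together with conservation of $h$ along \eqref{flow} transfers the identity to every $t\in\R$. There is no real obstacle here. The only points needing care are checking that the cross term in the kinetic energy vanishes because $P_\omega$ is real-valued, and that the first-order term in the Taylor expansion of $V$ vanishes by radiality, both of which are immediate.
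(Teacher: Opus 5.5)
Your proposal is correct and follows the paper's proof essentially step for step. You reduce to $t=0$ by conservation of $E^\eps_V$ and of $\mathcal H$, compute the exact identity by the change of variables $x=x_0+\eps y$, and apply Lemma~\ref{l32} with $f=V$; your explicit check that the kinetic cross term vanishes because $P_\omega$ is real just fills in what the paper takes for granted.
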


\begin{proof}
Conservation of semiclassical energy gives
$E^\eps_{V}(u^\eps(t))=E^\eps_{V}(u^\eps(0))$. Recalling 
\[
E^\eps_{V}(u^\eps(0))
=
E_0(P_\omega)
+
\tfrac12m_\omega|\xi_0|^2
+
\tfrac12
\int_{\R^3}
V(x_0+\eps y)P_\omega^2(y)\,dy,
\]
Lemma~\ref{l32} with $f=V$ yields $\int V(x_0+\eps y)P_\omega^2=m_\omega V(x_0)+O(\eps^2)$. Hence $E^\eps_{V}(u^\eps(t))=E_0(P_\omega)+m_\omega\mathcal H(0)+O(\eps^2)$. Since $\mathcal H$ is conserved along \eqref{flow}, the result follows.
\end{proof}

Fix $T_0>0$ and set
$R_0:=\sup_{0\leq t\leq T_0}|x(t)|$, and 
$\rho:=R_0+1$.  Choose $\chi\in C_c^\infty(\R^3)$ such that
\begin{equation}
\label{e73}
0\leq\chi\leq1,
\qquad
\chi=1
\ \text{on }B(0,\rho),
\qquad
\chi=0
\ \text{on }\R^3\setminus B(0,2\rho).
\end{equation}

We introduce the momentum and potential-energy defects
\begin{equation}
\label{e74}
\sigma^\eps(t)
:=
\int_{\R^3}
p^\eps(t,x)\,dx
-
2m_\omega\xi(t)
\end{equation}
and
\begin{equation}
\label{e75}
\lambda^\eps(t)
:=
m_\omega V(x(t))
-
\int_{\R^3}
\chi(x)V(x)\rho^\eps(t,x)\,dx.
\end{equation}

\begin{lemma}
\label{l34}
Let $u^\eps$ be the global solution to \eqref{eqn}. Then $\sigma^\eps\in C([0,T_0];\R^3)$, $\lambda^\eps\in C([0,T_0];\R)$, with $\sigma^\eps(0)=0$ and $|\lambda^\eps(0)|\lesssim\eps^2$ for all sufficiently small $\eps>0$.
\end{lemma}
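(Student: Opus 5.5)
The plan is to treat continuity and the initial values separately, using only Proposition~\ref{wp}, the regularity of the classical flow, and Lemma~\ref{l32}.

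\emph{Continuity.} Proposition~\ref{wp} gives that $t\mapsto\int p^\eps(t,x)\,dx$ is locally absolutely continuous, hence continuous. The trajectory $\xi(t)$ is $C^1$ because \eqref{flow} has a globally Lipschitz vector field. Therefore $\sigma^\eps\in C([0,T_0];\R^3)$.

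For $\lambda^\eps$, the term $m_\omega V(x(t))$ is continuous in $t$. For the integral term, $\chi V$ is bounded and compactly supported, so
\[
\Bigl|\int\chi V(\rho^\eps(t)-\rho^\eps(s))\Bigr|\le \eps^{-3}\|\chi V\|_\infty\|u^\eps(t)-u^\eps(s)\|_2\bigl(\|u^\eps(t)\|_2+\|u^\eps(s)\|_2\bigr).
\]
This tends to $0$ as $s\to t$ because $u^\eps\in C(\R;\Sigma)\subset C(\R;L^2)$.

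\emph{Initial value of $\sigma^\eps$.} Since $P_\omega$ is real, the initial datum has the form $u^\eps(0)=e^{i\xi_0\cdot(x-x_0)/\eps+i\theta_0}P_\omega((x-x_0)/\eps)$. Consequently $\IM(\bar u\nabla u)=\eps^{-1}\xi_0|u|^2$. This gives $\int p^\eps(0)=2\eps^{-3}\xi_0\int |u^\eps(0)|^2=2m_\omega\xi_0$, as already noted after \eqref{mass}. Hence $\sigma^\eps(0)=0$.

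\emph{Initial value of $\lambda^\eps$.} This is the only step needing care, because of the cutoff $\chi$. Changing variables $x=x_0+\eps y$ gives
\[
\int\chi V\rho^\eps(0)\,dx=\int(\chi V)(x_0+\eps y)P_\omega^2(y)\,dy.
\]
The function $f:=\chi V$ lies in $C_c^\infty$, so $\|D^2f\|_\infty<\infty$. Lemma~\ref{l32} then yields $m_\omega(\chi V)(x_0)+O(\eps^2)$. Since $|x_0|\le R_0<\rho$, we have $\chi(x_0)=1$, so this equals $m_\omega V(x_0)+O(\eps^2)=m_\omega V(x(0))+O(\eps^2)$. Hence $|\lambda^\eps(0)|\lesssim\eps^2$.

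The constant is in fact uniform for all $\eps\le1$. The phrase ``sufficiently small $\eps$'' is therefore harmless; it reflects only the $O(\eps^2)$ statement of Lemma~\ref{l32}.
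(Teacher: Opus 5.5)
Your proof is correct. It differs from the paper's only in how you estimate $\lambda^\eps(0)$; continuity and $\sigma^\eps(0)=0$ are handled essentially as in the paper. There, continuity of the momentum comes from $u^\eps\in C(\R;H^1)$, whereas you use the absolute continuity stated in Proposition~\ref{wp}; either suffices.

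\emph{The paper's route for $\lambda^\eps(0)$.} It applies Lemma~\ref{l32} to $f=V$. It must then control separately the tail term $\int[1-\chi(x_0+\eps y)]V(x_0+\eps y)P_\omega^2(y)\,dy$. This uses two facts: $1-\chi(x_0+\eps y)\neq0$ forces $|y|\ge\eps^{-1}$, because $\rho-R_0=1$; and in that region the quadratic growth bound gives $|V(x_0+\eps y)|\lesssim 1+\eps|y|+\eps^2|y|^2\le3\eps^2|y|^2$.

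\emph{Your route.} You apply Lemma~\ref{l32} directly to $f=\chi V\in C_c^\infty$. This absorbs the cutoff into the test function, so no tail estimate is needed. The only extra input is $\chi(x_0)=1$, which you correctly check from $|x_0|\le R_0<\rho$.

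\emph{Comparison.} Your argument is shorter. Its constant involves $\|D^2(\chi V)\|_{L^\infty}$, hence derivatives of $\chi$ and the size of $V$ on $B(0,2\rho)$. The paper's argument for this step uses only $0\le\chi\le1$ and $\chi=1$ on $B(0,\rho)$, so it would also work with a rough cutoff. Smoothness of $\chi$ is needed later anyway (e.g.\ $\chi\nabla V\in C_b^2$ in the proof of Theorem~\ref{main}), so nothing is lost.

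Your remark that the bound is uniform for all $0<\eps\le1$ is correct for both arguments.
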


\begin{proof}
Since $u^\eps\in C(\R;H^1)$, the total momentum $t\mapsto\int p^\eps(t,x)\,dx$ is continuous; hence $\sigma^\eps$ is continuous. 
Similarly, $\chi V$ is bounded and compactly supported, so $t\mapsto\int\chi V\,\rho^\eps(t)$ is continuous, and thus $\lambda^\eps\in C([0,T_0];\R)$.
Moreover, since $\int p^\eps(0,x)\,dx=2m_\omega\xi_0$, we get $\sigma^\eps(0)=0$.

On the other hand, changing variables gives
\[
\lambda^\eps(0)
=
m_\omega V(x_0)
-
\int V(x_0+\eps y)P_\omega^2(y)\,dy
+
\int [1-\chi(x_0+\eps y)]V(x_0+\eps y)P_\omega^2(y)\,dy.
\]
By Lemma~\ref{l32}, the first difference is $O(\eps^2)$. If $1-\chi(x_0+\eps y)\neq0$, then $x_0+\eps y\notin B(0,\rho)$, so $|x_0+\eps y|\ge\rho$. Since $|x_0|\le R_0=\rho-1$, we get $\eps|y|\ge |x_0+\eps y|-|x_0|\ge\rho-R_0=1$, hence $|y|\ge\eps^{-1}$.

Moreover, boundedness of $D^2V$ gives $|V(x_0+\eps y)|\lesssim1+\eps|y|+\eps^2|y|^2$. In the region $|y|\ge\eps^{-1}$, we have $1+\eps|y|+\eps^2|y|^2\le3\eps^2|y|^2$. Thus
\[
\left|\int [1-\chi(x_0+\eps y)]V(x_0+\eps y)P_\omega^2(y)\,dy\right|
\le C\int_{|y|\ge\eps^{-1}}(1+\eps|y|+\eps^2|y|^2)P_\omega^2(y)\,dy
\lesssim_\omega\eps^2.
\]
Therefore, $|\lambda^\eps(0)|\lesssim\eps^2$. This completes the proof.
\end{proof}

Define
\begin{equation}
\label{psid}
\psi^\eps(t,x)
:=
e^{-\frac{i}{\eps}
\xi(t)\cdot[\eps x+x(t)]}
u^\eps(t,\eps x+x(t)).
\end{equation}
A change of variables and conservation of mass give
\begin{equation}
\label{psim}
M(\psi^\eps(t))
=
M^\eps(u^\eps(t))
=
m_\omega.
\end{equation}

\begin{lemma}
\label{l35}
Let $\omega\in\mathcal I$, and let $u^\eps$ be the global solution to \eqref{eqn}. Then, for every $t\in[0,T_0]$,
\begin{align}
E_0(\psi^\eps(t))-E_0(P_\omega)
&=
\frac12\lambda^\eps(t)
-\frac12\xi(t)\cdot\sigma^\eps(t)
\nonumber\\
&\quad
-\frac12\int_{\R^3}(1-\chi(x))V(x)\rho^\eps(t,x)\,dx
+O(\eps^2),
\label{l35e}
\end{align}
and
\[
0\le E_0(\psi^\eps(t))-E_0(P_\omega)
\le
\frac12|\xi(t)|\,|\sigma^\eps(t)|
+\frac12|\lambda^\eps(t)|
+C\eps^2,
\]
uniformly in $t$.
\end{lemma}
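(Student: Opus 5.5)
The plan is to compute $E_0(\psi^\eps(t))$ directly in terms of the semiclassical quantities of $u^\eps$, and then use conservation of energy (Lemma~\ref{l33}). Changing variables $x\mapsto \eps x+x(t)$ in the definition \eqref{psid}, we have $\nabla\psi^\eps(t,x)=\eps\,e^{-\frac{i}{\eps}\xi\cdot[\eps x+x(t)]}\bigl(\nabla u^\eps-\tfrac{i}{\eps}\xi u^\eps\bigr)(t,\eps x+x(t))$. Expanding the square and using the definition \eqref{dens} of $p^\eps$ gives
\[
\tfrac12\|\nabla\psi^\eps\|_2^2=\eps^{-1}\tfrac12\|\nabla u^\eps\|_2^2-\tfrac12\xi(t)\cdot\int p^\eps\,dx+\tfrac12|\xi(t)|^2 m_\omega ,
\]
where we used \eqref{mass}. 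The $L^4$ and $L^6$ terms scale exactly into $\eps^{-3}$ times those of $u^\eps$. Hence
\[
E_0(\psi^\eps(t))=E^\eps_V(u^\eps(t))-\tfrac12\int V\rho^\eps\,dx-\tfrac12\xi(t)\cdot\int p^\eps\,dx+\tfrac12 m_\omega|\xi(t)|^2 .
\]

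Next we insert $\int p^\eps=\sigma^\eps+2m_\omega\xi$, which turns the last two terms into $-\tfrac12\xi\cdot\sigma^\eps-\tfrac12 m_\omega|\xi|^2$. We split $\int V\rho^\eps=\int\chi V\rho^\eps+\int(1-\chi)V\rho^\eps$ and write $\int\chi V\rho^\eps=m_\omega V(x(t))-\lambda^\eps(t)$ by \eqref{e75}. Lemma~\ref{l33} replaces $E^\eps_V(u^\eps(t))$ by $E_0(P_\omega)+\tfrac12 m_\omega|\xi|^2+\tfrac12 m_\omega V(x(t))+O(\eps^2)$. The terms $\pm\tfrac12 m_\omega|\xi|^2$ and $\pm\tfrac12 m_\omega V(x(t))$ then cancel, which yields exactly \eqref{l35e}.

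For the two-sided bound, the lower bound $E_0(\psi^\eps(t))\ge E_0(P_\omega)$ follows from \eqref{psim}. Indeed, $\psi^\eps$ has mass $m_\omega$, and since $\omega\in\mathcal I$, $E_0(P_\omega)=E_{\min}(m_\omega)$. For the upper bound, $V\ge0$ and $0\le\chi\le1$ imply that the term $-\tfrac12\int(1-\chi)V\rho^\eps$ is nonpositive and can be dropped. The remaining terms are bounded by Cauchy--Schwarz, giving $\tfrac12|\xi||\sigma^\eps|+\tfrac12|\lambda^\eps|+C\eps^2$. Uniformity in $t$ comes from the uniformity of the $O(\eps^2)$ in Lemma~\ref{l33}.

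The main thing to check carefully is the kinetic cross term. We need the sign and normalization of $\IM(\bar u\nabla u)$ to match $p^\eps=2\eps^{-2}\IM(\bar u\nabla u)$. The computation gives $\RE\bigl(\overline{\nabla u}\cdot(-\tfrac{i}{\eps}\xi u)\bigr)=-\eps^{-1}\xi\cdot\IM(\bar u\nabla u)$. After multiplying by $\eps^2\cdot\eps^{-3}$ this becomes $-\tfrac12\xi\cdot\int p^\eps$, which fits the formula above. This computation is legitimate since $u^\eps(t)\in\Sigma$ by Proposition~\ref{wp}, so every integral involved is finite.
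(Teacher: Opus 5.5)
Your proposal is correct and follows essentially the same route as the paper. Both change variables to write $E_0(\psi^\eps)=E^\eps_V(u^\eps)-\tfrac12\int V\rho^\eps+\tfrac12 m_\omega|\xi|^2-\tfrac12\xi\cdot\int p^\eps$, then insert Lemma~\ref{l33}, the momentum defect $\sigma^\eps$, and the splitting $V=\chi V+(1-\chi)V$ with \eqref{e75}, and use fixed-mass minimality (from $\omega\in\mathcal I$) for the lower bound and $V\ge0$, $0\le\chi\le1$ for the upper bound. Your sign check on the kinetic cross term is also correct.
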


\begin{proof}
From \eqref{psid}, $\nabla\psi^\eps(x,t)=e^{-\frac{i}{\eps}\xi(t)\cdot[\eps x+x(t)]}[\eps\nabla u^\eps(y,t)-i\xi(t)u^\eps(y,t)]$, $y=\eps x+x(t)$. Changing variables in the  energy gives
\[
E_0(\psi^\eps(t))
=
E^\eps_{V}(u^\eps(t))
-\frac12\int V(x)\rho^\eps(t,x)\,dx
+\frac12m_\omega|\xi(t)|^2
-\frac12\xi(t)\cdot\int p^\eps(t,x)\,dx.
\]
Using Lemma~\ref{l33}, $\mathcal H(t)=\frac12|\xi(t)|^2+\frac12V(x(t))$, and $\int p^\eps=2m_\omega\xi+\sigma^\eps$, we obtain
\[
E_0(\psi^\eps(t))-E_0(P_\omega)
=
\frac12m_\omega V(x(t))
-\frac12\int V(x)\rho^\eps(t,x)\,dx
-\frac12\xi(t)\cdot\sigma^\eps(t)
+O(\eps^2).
\]
Splitting $V=\chi V+(1-\chi)V$ and using \eqref{e75} yields \eqref{l35e}.

Since $V\ge0$ and $0\le\chi\le1$, \eqref{l35e} gives
\[
E_0(\psi^\eps(t))-E_0(P_\omega)
\le
\frac12|\lambda^\eps(t)|
+\frac12|\xi(t)|\,|\sigma^\eps(t)|
+O(\eps^2).
\]
On the other hand, by \eqref{psim} and $\omega\in\mathcal I$, $M(\psi^\eps(t))=M(P_\omega)$ and $E_0(P_\omega)=E_{\min}(M(P_\omega))$, so $E_0(\psi^\eps(t))-E_0(P_\omega)\ge0$. This completes the proof.
\end{proof}

\begin{lemma}
\label{l36}
Let $L:=\|D^2V\|_{L^\infty}$. Then 
\begin{equation}
\label{vgr}
|\nabla V(x)|^2
\leq
2LV(x),
\qquad
x\in\R^3.
\end{equation}
If $L=0$, then $\nabla V\equiv0$.
\end{lemma}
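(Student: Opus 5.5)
We will prove the inequality with the classical "nonnegative function with bounded Hessian" argument: use the second-order Taylor expansion of $V$ along the direction of steepest descent, together with $V\ge0$ (which we may assume, as explained after \eqref{H1}). Only the bound $\|D^2V\|_{L^\infty}=L$ is used, not any higher regularity.

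Fix $x\in\R^3$ and assume $\nabla V(x)\neq0$, since otherwise \eqref{vgr} is trivial. Set $e:=\nabla V(x)/|\nabla V(x)|$ and, for $s\in\R$, let $g(s):=V(x-se)$. Then $g(0)=V(x)$, $g'(0)=-|\nabla V(x)|$, and $|g''(s)|=|\langle D^2V(x-se)e,e\rangle|\le L$ for all $s$. Taylor's formula with Lagrange remainder gives, for every $s\in\R$,
\[
0\le V(x-se)=g(s)\le V(x)-s|\nabla V(x)|+\tfrac{L}{2}s^2 .
\]

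Assume first $L>0$. We minimize the right-hand side by choosing $s=|\nabla V(x)|/L$. This yields
\[
0\le V(x)-\frac{|\nabla V(x)|^2}{2L},
\]
which is exactly \eqref{vgr}.

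If $L=0$, the same inequality reads $0\le V(x)-s|\nabla V(x)|$ for every $s\in\R$. Letting $s\to+\infty$ forces $|\nabla V(x)|=0$, so $\nabla V\equiv0$. In this case \eqref{vgr} holds trivially, both sides being zero.

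No step is a genuine obstacle. The only points needing care are the normalization $V\ge0$, which is inherited from the reduction after \eqref{H1}, and the fact that the Hessian bound is global. The latter is what allows the Taylor expansion at the possibly large distance $s=|\nabla V(x)|/L$ from $x$.
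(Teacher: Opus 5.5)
Your proof is correct and is essentially the paper's argument. The paper evaluates the same quadratic upper bound $V(y)\le V(x)+\nabla V(x)\cdot(y-x)+\frac L2|y-x|^2$ at $y=x-\frac1L\nabla V(x)$, which is your point $x-se$ with $s=|\nabla V(x)|/L$, and then uses $V\ge0$. For $L=0$ the paper observes that $V$ is affine and bounded below, which is exactly what your limit $s\to+\infty$ shows.
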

\begin{proof}
The proof is standard. For completeness, we give the details. Suppose first that $L>0$. Since $\nabla V$ is $L$-Lipschitz,
\[
V(y)\le V(x)+\nabla V(x)\cdot(y-x)+\tfrac L2|y-x|^2.
\]
Taking $y=x-\frac1L\nabla V(x)$ gives
$
0\le V(y)\le V(x)-\tfrac{1}{2L}|\nabla V(x)|^2,
$
which proves \eqref{vgr}. If $L=0$, then $V$ is affine; since $V$ is bounded below on $\R^3$, its linear part must vanish.
\end{proof}

We now give the proof of Theorem~\ref{t12}.

\begin{proof}[Proof of Theorem~\ref{t12}]
We follow closely the argument in \cite[Theorem 4.2]{SquassinaMag}. Estimate \eqref{e89} follows from Lemma~\ref{l31}. We prove the soliton decomposition.

Fix $\eta>0$ and $T_0>0$. Since $V\ge0$, we see that $|\xi(t)|^2\le2\mathcal H(0)$. Thus, we define $K_\xi:=\sup_{t\in\R}|\xi(t)|<\infty$. Let $\delta_\omega>0$ be the modulation radius in Lemma~\ref{l45}. Choose $\eta_0>0$ such that $\eta_0<\delta_\omega$ and $C_\xi C_\omega\eta_0<\eta$, where $C_\omega$ is from Lemma~\ref{l45} and $C_\xi=C(K_\xi)$ satisfies $\|r\|_2^2+\|\nabla r+i\xi r\|_2^2\le C_\xi^2\|r\|_{H^1}^2$ whenever $|\xi|\le K_\xi$.

Let $h>0$ be from Proposition~\ref{p22} for $\eta_0$, and define
$\beta^\eps(t):=\tfrac12|\xi(t)|\,|\sigma^\eps(t)|+
\tfrac12|\lambda^\eps(t)|$. Note that 
$\beta^\eps$ is continuous, and by Lemma~\ref{l34}, $\beta^\eps(0)\leq
C\eps^2$ for some $C>0$. Set 
\[
T_\eps^*:=\sup\{t\in[0,T_0]:\beta^\eps(s)\le \tfrac h2\text{ for all }s\in[0,t]\}.
\]
For $\eps$ small, it is not hard to show that $T_\eps^*>0$. If $t<T_\eps^*$, Lemma~\ref{l35} gives
\[
0\le E_0(\psi^\eps(t))-E_0(P_\omega)\le \tfrac h2+C\eps^2.
\]
Taking $\eps$ smaller if needed, we infer that $E_0(\psi^\eps(t))-E_0(P_\omega)<h$. Since $M(\psi^\eps(t))=M(P_\omega)$, Proposition~\ref{p22} yields $d_\omega(\psi^\eps(t))<\eta_0$.

On the other hand, by Lemma~\ref{l45} we have continuous parameters
$\theta_*^\eps(t)\in\R$, $z^\eps(t)\in\R^3$ such that
$r^\eps(t,x):=e^{-i\theta_*^\eps(t)}\psi^\eps(t,x+z^\eps(t))-P_\omega(x)$
satisfies $r^\eps(t)\perp iP_\omega$,
$r^\eps(t)\perp\partial_jP_\omega$ ($j=1,2,3$), and 
\begin{align}
	\|r^\eps(t)\|_{H^1}\le C_\omega d_\omega(\psi^\eps(t))<C_\omega\eta_0. \label{INE3}
\end{align}
Set $y^\eps(t):=x(t)+\eps z^\eps(t)$, $\vartheta^\eps(t):=\eps\theta_*^\eps(t)$. Recalling \eqref{psid}, we get
\[
u^\eps(t,x)=e^{\tfrac{i}{\eps}[\xi(t)\cdot x+\vartheta^\eps(t)]}P_\omega\!\left(\tfrac{x-y^\eps(t)}{\eps}\right)+w^\eps(t,x),
\]
where $w^\eps(t,X):=e^{\tfrac{i}{\eps}[\xi(t)\cdot X+\vartheta^\eps(t)]}r^\eps\!\left(t,\tfrac{X-y^\eps(t)}{\eps}\right)$. A direct computation gives
\[
\|w^\eps(t)\|_{H^1_\eps}^2
=
\|r^\eps(t)\|_2^2+\|\nabla r^\eps(t)+i\xi(t)r^\eps(t)\|_2^2
\le C_\xi^2\|r^\eps(t)\|_{H^1}^2.
\]
Thus by \eqref{INE3}, $\sup_{0\le t<T_\eps^*}\|w^\eps(t)\|_{H^1_\eps}<\eta$, which proves the theorem.
\end{proof}


\section{Proof of  Theorem~\ref{main}}
\label{pmt}

The main objective of this section is to prove Theorem~\ref{main}.
Throughout this section, $T_0>0$ is fixed, and $\chi$ denotes
the cutoff introduced in \eqref{e73}.

\begin{proposition}
\label{p51}
Let $\omega\in\mathcal I_+$ and let $I\subset[0,T_0]$ be an interval containing $0$ such that
\[
E_0(\psi^\eps(t))<E_0(P_\omega)+h_\omega,\qquad t\in I,
\]
where $h_\omega$ is from Proposition~\ref{stab}. Then there exist continuous $\theta_*^\eps:I\to\R$, $z^\eps:I\to\R^3$, with $z^\eps(0)=0$, such that for $y^\eps(t):=x(t)+\eps z^\eps(t)$,
\[
u^\eps(t,x)=e^{\frac{i}{\eps}[\xi(t)\cdot x+\eps\theta_*^\eps(t)]}P_\omega\!\left(\frac{x-y^\eps(t)}{\eps}\right)+w_1^\eps(t,x),
\]
and
\[
\|w_1^\eps(t)\|_{H^1_\eps}^2\lesssim_{T_0}|\sigma^\eps(t)|+|\lambda^\eps(t)|+\eps^2,\qquad t\in I.
\]
\end{proposition}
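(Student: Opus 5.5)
The plan is to run the argument from the proof of Theorem~\ref{t12}, with the qualitative Proposition~\ref{p22} replaced by the quantitative coercivity estimate of Proposition~\ref{stab}.

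\textbf{Step 1 (closeness to the orbit for each $t\in I$).} By \eqref{psim}, $M(\psi^\eps(t))=M(P_\omega)$. By hypothesis, $E_0(\psi^\eps(t))<E_0(P_\omega)+h_\omega$. Proposition~\ref{stab} together with the upper bound of Lemma~\ref{l35} then gives
\[
d_\omega(\psi^\eps(t))^2\le C_\omega\bigl[E_0(\psi^\eps(t))-E_0(P_\omega)\bigr]
\le C_\omega\Bigl(\tfrac12|\xi(t)|\,|\sigma^\eps(t)|+\tfrac12|\lambda^\eps(t)|+C\eps^2\Bigr).
\]
Since $V\ge0$, we have $|\xi(t)|\le K_\xi$ uniformly in $t$. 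Hence $d_\omega(\psi^\eps(t))^2\lesssim_{T_0}|\sigma^\eps(t)|+|\lambda^\eps(t)|+\eps^2$.

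\textbf{Step 2 (modulation).} To apply Lemma~\ref{l45} we need $d_\omega(\psi^\eps(t))<\delta_\omega$ on $I$. I would arrange this by shrinking $h_\omega$ at the outset: replace it by $\min(h_\omega,\delta_\omega^2/(2C_\omega))$, which is legitimate because Proposition~\ref{stab} still holds with a smaller threshold. Alternatively, one can choose $h_\omega$ as in its proof, where $h_\omega$ already comes from Proposition~\ref{p22} with $\eta_\omega\le\delta_\omega$.

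Since $t\mapsto\psi^\eps(t)$ is continuous in $H^1$ (because $u^\eps\in C(\R;\Sigma)$ and $x(t),\xi(t)$ are continuous), Lemma~\ref{l45} yields continuous parameters $\theta_*^\eps(t)$, $z^\eps(t)$ on $I$. With $r^\eps(t):=e^{-i\theta_*^\eps}\psi^\eps(t,\cdot+z^\eps)-P_\omega$, these satisfy $\|r^\eps(t)\|_{H^1}\le C_\omega d_\omega(\psi^\eps(t))$.

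The normalization $z^\eps(0)=0$ comes from the structure at $t=0$. There $\psi^\eps(0)=e^{-\frac{i}{\eps}\xi_0\cdot x_0+i\theta_0}P_\omega$ lies exactly on the orbit. By the uniqueness part of Lemma~\ref{l45}, applied to $e^{-i\theta}\psi^\eps(0)=P_\omega$ with the appropriate phase, the modulated translation is $0$ and the phase is $\theta_0-\xi_0\cdot x_0/\eps$ modulo $2\pi$. We fix this branch and continue continuously.

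\textbf{Step 3 (undoing the scaling).} As in the proof of Theorem~\ref{t12}, set $y^\eps=x(t)+\eps z^\eps$ and
\[
w_1^\eps(t,X):=e^{\frac{i}{\eps}[\xi(t)\cdot X+\eps\theta_*^\eps(t)]}\,r^\eps\Bigl(t,\tfrac{X-y^\eps(t)}{\eps}\Bigr).
\]
This gives the stated decomposition. One checks the phase bookkeeping: $\psi^\eps$ carries $e^{-\frac{i}{\eps}\xi\cdot(\eps x+x(t))}$, and this combines with the factor $e^{-\frac{i}{\eps}\xi\cdot\eps z^\eps}$ coming from the translation. Any resulting constant phases are absorbed into $\theta_*^\eps$, and continuity is preserved.

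The same direct computation as in Theorem~\ref{t12} gives
\[
\|w_1^\eps(t)\|_{H^1_\eps}^2=\|r^\eps\|_2^2+\|\nabla r^\eps+i\xi r^\eps\|_2^2\le C_\xi^2\|r^\eps\|_{H^1}^2.
\]
Combining this with Step 1 gives $\|w_1^\eps(t)\|_{H^1_\eps}^2\lesssim_{T_0}|\sigma^\eps(t)|+|\lambda^\eps(t)|+\eps^2$.

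\textbf{Main obstacle.} The delicate point is ensuring that the smallness threshold $h_\omega$ is compatible with the modulation radius $\delta_\omega$, and that the continuous branch of parameters starts with $z^\eps(0)=0$. I expect to handle both by the choice of $h_\omega$ and the uniqueness clause of Lemma~\ref{l45} described in Step 2. The remainder of the argument is direct bookkeeping.
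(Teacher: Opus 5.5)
Your proposal is correct and follows the paper's proof essentially step by step. You use Proposition~\ref{stab} with the upper bound of Lemma~\ref{l35} and the uniform bound on $\xi$ to control $d_\omega(\psi^\eps(t))^2$. Lemma~\ref{l45} then supplies continuous modulation parameters, with $z^\eps(0)=0$ because $\psi^\eps(0)=e^{i(\theta_0-\xi_0\cdot x_0/\eps)}P_\omega$. The rescaling identity $\|w_1^\eps\|_{H^1_\eps}^2=\|r^\eps\|_2^2+\|\nabla r^\eps+i\xi r^\eps\|_2^2$ concludes.

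Your explicit check that $h_\omega$ is compatible with the modulation radius $\delta_\omega$ is a point the paper leaves implicit in how $h_\omega$ is constructed in the proof of Proposition~\ref{stab}. In Step~3, no extra constant phase actually arises: the phases combine exactly into $e^{\frac{i}{\eps}\xi(t)\cdot X}$.
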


\begin{proof}
By Proposition~\ref{stab} and Lemma~\ref{l35},
\[
d_\omega(\psi^\eps(t))^2
\lesssim_\omega E_0(\psi^\eps(t))-E_0(P_\omega)
\lesssim_{T_0}|\sigma^\eps(t)|+|\lambda^\eps(t)|+\eps^2,
\]
using the uniform bound on $\xi(t)$. Lemma~\ref{l45} gives continuous $\theta_*^\eps(t)$, $z^\eps(t)$ such that
\[
r^\eps(t,x):=e^{-i\theta_*^\eps(t)}\psi^\eps(t,x+z^\eps(t))-P_\omega(x)
\]
satisfies $r^\eps(t)\perp iP_\omega$, $r^\eps(t)\perp\partial_jP_\omega$ ($j=1,2,3$), and
\[
\|r^\eps(t)\|_{H^1}^2\lesssim_{T_0}|\sigma^\eps(t)|+|\lambda^\eps(t)|+\eps^2.
\]
At $t=0$, $\psi^\eps(0)=e^{i(\theta_0-\xi_0\cdot x_0/\eps)}P_\omega$, so we choose $z^\eps(0)=0$. Notice also that
\[
u^\eps(t,x)=e^{\frac{i}{\eps}[\xi(t)\cdot x+\eps\theta_*^\eps(t)]}
\left[P_\omega\!\left(\frac{x-y^\eps(t)}{\eps}\right)
+r^\eps\!\left(t,\frac{x-y^\eps(t)}{\eps}\right)\right].
\]
Setting $w_1^\eps(t,x):=e^{\frac{i}{\eps}[\xi(t)\cdot x+\eps\theta_*^\eps(t)]}r^\eps(t,\frac{x-y^\eps(t)}{\eps})$, a direct computation gives
\[
\|w_1^\eps(t)\|_{H^1_\eps}^2
=
\|r^\eps(t)\|_2^2+\|\nabla r^\eps(t)+i\xi(t)r^\eps(t)\|_2^2
\lesssim\|r^\eps(t)\|_{H^1}^2
\lesssim|\sigma^\eps(t)|+|\lambda^\eps(t)|+\eps^2,
\]
which proves the proposition.
\end{proof}

\begin{lemma}
\label{l52}
Let $I$ be as in Proposition~\ref{p51} and set $D^\eps(t):=|\sigma^\eps(t)|+|\lambda^\eps(t)|+\eps^2$. Assume $D^\eps(t)\le1$ for $t\in I$. Then
\[
\left\|\rho^\eps(t)\,dx-m_\omega\delta_{y^\eps(t)}\right\|_{(C_b^2)^*}
+
\left\|p^\eps(t)\,dx-2m_\omega\xi(t)\delta_{y^\eps(t)}\right\|_{(C_b^2)^*}
\lesssim_{T_0}D^\eps(t)
\]
for every $t\in I$.
\end{lemma}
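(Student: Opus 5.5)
The plan is to test both measures against $f\in C_b^2(\R^3)$ with $\|f\|_{C_b^2}\le 1$ and pass to the modulated profile coordinates of Proposition~\ref{p51}. There we have $u^\eps(t,\eps x+y^\eps(t))=e^{\frac{i}{\eps}\xi(t)\cdot(\eps x+y^\eps(t))+i\theta_*^\eps(t)}(P_\omega+r^\eps)(t,x)$, with $r^\eps\perp iP_\omega$, $r^\eps\perp\partial_jP_\omega$, and $\|r^\eps(t)\|_{H^1}^2\lesssim_{T_0}D^\eps(t)$. The one device used throughout is that $\eps^2\le D^\eps$, so any term of size $\eps\|r^\eps\|_{H^1}$ is at most $\eps^2+\|r^\eps\|_{H^1}^2\lesssim D^\eps$ by the AM--GM inequality. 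This is what converts the square-root control of $w_1^\eps$ into linear control of the densities. I write $y=y^\eps(t)$, $r=r^\eps(t)=r_1+ir_2$ and $\xi=\xi(t)$, and use $|\xi(t)|\le K_\xi$.

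For the mass density, I will change variables to get
\[
\int f\rho^\eps=\int f(y+\eps x)\bigl[P_\omega^2+2P_\omega r_1+|r|^2\bigr]dx .
\]
By Lemma~\ref{l32}, the $P_\omega^2$ term equals $m_\omega f(y)+O(\eps^2)$. The $|r|^2$ term is bounded by $\|r\|_2^2\lesssim D^\eps$. For the cross term I Taylor expand $f(y+\eps x)=f(y)+O(\eps|x|)$. The zeroth-order piece is $2f(y)(r,P_\omega)$, and the mass identity $2(r,P_\omega)=-\|r\|_2^2$ (which holds since $M(\psi^\eps)=m_\omega$) makes it $O(D^\eps)$. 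The remainder is $\lesssim\eps\|\,|x|P_\omega\|_2\|r\|_2\lesssim D^\eps$ by exponential decay and the AM--GM device above.

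For the momentum density, I will compute
\[
\int f p^\eps=2\xi\int f(y+\eps x)|P_\omega+r|^2dx+2\int f(y+\eps x)\,\IM\bigl(\overline{(P_\omega+r)}\nabla(P_\omega+r)\bigr)dx .
\]
The first term is $2\xi[m_\omega f(y)+O(D^\eps)]$ by the mass step, and $|\xi|$ is bounded. In the second term, $\IM(P_\omega\nabla P_\omega)=0$, and the quadratic part $\IM(\bar r\nabla r)$ is $O(\|r\|_{H^1}^2)$. The linear part is $P_\omega\nabla r_2-r_2\nabla P_\omega$. After Taylor expansion, its $O(\eps|x|)$ contribution is $\lesssim\eps\|r\|_{H^1}\lesssim D^\eps$, exactly as before.

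The main obstacle is the zeroth-order linear piece, $f(y)\int(P_\omega\nabla r_2-r_2\nabla P_\omega)=-2f(y)\int r_2\nabla P_\omega$. The modulation conditions only give $r_1\perp\partial_jP_\omega$ and $r_2\perp P_\omega$, so orthogonality does not kill this term. My plan is to use the momentum defect instead. Applying the same computation with $f\equiv1$ to the total momentum gives
\[
\int p^\eps=2m_\omega\xi+2\xi\bigl(2(r,P_\omega)+\|r\|_2^2\bigr)-4\int r_2\nabla P_\omega+2\int\IM(\bar r\nabla r).
\]
The second bracket vanishes by mass conservation, so $4\int r_2\nabla P_\omega=-\sigma^\eps(t)+O(\|r\|_{H^1}^2)=O(D^\eps)$. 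Collecting all the estimates gives $|\int f\rho^\eps-m_\omega f(y^\eps)|+|\int fp^\eps-2m_\omega\xi f(y^\eps)|\lesssim_{T_0}D^\eps(t)$, uniformly in $f$ with $\|f\|_{C_b^2}\le1$. The assumption $D^\eps\le1$ is used only to absorb lower powers where needed.
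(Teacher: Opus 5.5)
Your argument is correct and is essentially the computation the paper defers to via \cite[Lemma~6.1]{SquassinaMag}, in the spirit of Keraani. It has four ingredients: rescale to the modulated profile $P_\omega+r^\eps$; use Lemma~\ref{l32} and the mass identity $2(r^\eps,P_\omega)=-\|r^\eps\|_2^2$ for the density; absorb the $O(\eps\|r^\eps\|_{H^1})$ Taylor remainders via $\eps\|r^\eps\|_{H^1}\le\frac12(\eps^2+\|r^\eps\|_{H^1}^2)$; and control the term $\int r_2\nabla P_\omega$, which the modulation conditions do not kill, through the exact identity $\sigma^\eps(t)=2\IM\int\overline{(P_\omega+r^\eps)}\nabla(P_\omega+r^\eps)\,dx$. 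One minor remark: your estimates never actually use the hypothesis $D^\eps\le1$, since every term is already $\lesssim\eps^2+|\sigma^\eps|+\|r^\eps\|_{H^1}^2$.
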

\begin{proof}
This is the nonmagnetic counterpart of \cite[Lemma~6.1]{SquassinaMag}; the proof is essentially the same, so we omit the details.
\end{proof}

For the next lemmas, we introduce the following quantities:
\[
\gamma^\eps(t)
:=
m_\omega x(t)
-
\int_{\R^3}
x\chi(x)\rho^\eps(t,x)\,dx
\]
and
\[
\Xi^\eps(t)
:=
|\sigma^\eps(t)|
+
|\lambda^\eps(t)|
+
|\gamma^\eps(t)|.
\]

\begin{lemma}
\label{l53}
The function $\gamma^\eps:[0,T_0]\to\R^3$ is continuous and satisfies $|\gamma^\eps(0)|\lesssim_{T_0}\eps^2$ for all sufficiently small $\eps$.
\end{lemma}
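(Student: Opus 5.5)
Continuity will be handled exactly as for $\lambda^\eps$ in Lemma~\ref{l34}. The function $x\chi(x)$ is bounded and compactly supported, and $u^\eps\in C(\R;H^1)\subset C(\R;L^2)$. Hence $t\mapsto|u^\eps(t)|^2$ is continuous into $L^1$, and
\[
\Bigl|\int x\chi(x)\bigl(\rho^\eps(t)-\rho^\eps(s)\bigr)dx\Bigr|
\le 2\rho\,\eps^{-3}\|u^\eps(t)-u^\eps(s)\|_2\bigl(\|u^\eps(t)\|_2+\|u^\eps(s)\|_2\bigr)\to0 .
\]
Since $x(t)$ is continuous, so is $\gamma^\eps$.

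For the estimate at $t=0$, I will change variables $x=x_0+\eps y$ in $\rho^\eps(0,x)=\eps^{-3}P_\omega^2((x-x_0)/\eps)$. This gives
\[
\gamma^\eps(0)=m_\omega x_0-\int (x_0+\eps y)P_\omega^2(y)\,dy+\int\bigl[1-\chi(x_0+\eps y)\bigr](x_0+\eps y)P_\omega^2(y)\,dy .
\]
The first difference vanishes identically: $\int P_\omega^2=m_\omega$, and $\int yP_\omega^2=0$ because $P_\omega$ is radial. Alternatively, one may apply Lemma~\ref{l32} componentwise with $f(x)=x_j$, whose Hessian is zero.

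The remaining term is the tail. As in the proof of Lemma~\ref{l34}, $1-\chi(x_0+\eps y)\neq0$ forces $|x_0+\eps y|\ge\rho$. Since $|x_0|\le R_0=\rho-1$, this implies $|y|\ge\eps^{-1}$. On that region $|x_0+\eps y|\le R_0+\eps|y|\le (R_0+1)\eps|y|\le(R_0+1)\eps^2|y|^2$, so the tail is bounded by
\[
(R_0+1)\,\eps^2\int_{|y|\ge\eps^{-1}}|y|^2P_\omega^2(y)\,dy\lesssim_{T_0}\eps^2 ,
\]
because $P_\omega$ decays exponentially. (The bound is in fact much smaller, $O(\eps^\infty)$.) The constant depends on $T_0$ only through $R_0$, which gives $|\gamma^\eps(0)|\lesssim_{T_0}\eps^2$.

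I do not expect a real obstacle. The only point needing care is the growth of the weight $x$ in the tail, and it is absorbed by the same $|y|\ge\eps^{-1}$ trick used in Lemma~\ref{l34}.
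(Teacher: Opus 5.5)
Your proof is correct, and the continuity argument is the paper's. The estimate at $t=0$ takes a slightly different route.

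The paper never splits off the cutoff. It notes that $a(x)=x\chi(x)$ is smooth and compactly supported, so $\|D^2a\|_{L^\infty}<\infty$ and Lemma~\ref{l32} applies to $a$ componentwise. This gives $\int a(x_0+\eps y)P_\omega^2(y)\,dy=m_\omega a(x_0)+O_{T_0}(\eps^2)$, with the $T_0$-dependence entering through the derivatives of $\chi$, i.e.\ through $\rho$. Since $|x_0|\le R_0<\rho$, one has $\chi(x_0)=1$, hence $a(x_0)=x_0$ and $\gamma^\eps(0)=O_{T_0}(\eps^2)$ in one line.

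You instead remove the cutoff first, copying the structure of the proof of Lemma~\ref{l34}. The untruncated weight is linear, so the main term cancels exactly by radial symmetry, and only the tail on $|y|\ge\eps^{-1}$ remains; you bound it correctly. This takes a few more lines. In exchange, it shows that $\gamma^\eps(0)$ is in fact exponentially small, whereas the paper's argument records only the $O(\eps^2)$ bound needed later in the Gronwall step.
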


\begin{proof}
Set $a(x):=x\chi(x)$. Since $t\mapsto\rho^\eps(t)$ is continuous as an $L^1$-valued map and $a(x)$ is bounded, $\gamma^\eps\in C([0,T_0];\R^3)$. 

On the other hand, at $t=0$, changing variables $x=x_0+\eps y$ gives
\[
\int a(x)\rho^\eps(0,x)\,dx=\int a(x_0+\eps y)P_\omega^2(y)\,dy.
\]
Applying Lemma~\ref{l32} componentwise yields $\int a(x_0+\eps y)P_\omega^2=m_\omega a(x_0)+O(\eps^2)$. Since $|x_0|\le R_0<\rho$, we have $\chi(x_0)=1$, so $a(x_0)=x_0$. Thus
\[
\gamma^\eps(0)=m_\omega x_0-m_\omega a(x_0)+O_{T_0}(\eps^2)=O(\eps^2).
\]
\end{proof}

\begin{lemma}
\label{l54}
There exist $h_0=h_0(T_0)>0$ and $\eps_0=\eps_0(T_0)>0$ such that the following holds. Let $I=[0,\tau)\subset[0,T_0]$ be an interval where Proposition~\ref{p51} and Lemma~\ref{l52} apply, and suppose $\sup_{t\in I}\Xi^\eps(t)\le h_0$. Then, for $0<\eps<\eps_0$, $\chi(y^\eps(t))=1$ and $|x(t)-y^\eps(t)|\lesssim_{T_0}\Xi^\eps(t)+\eps^2$ for every $t\in I$.
\end{lemma}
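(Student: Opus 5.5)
The plan is to test the mass density against the vector field $a(x):=x\chi(x)$ and compare with Lemma~\ref{l52}, working first with an a priori bound and then closing by continuity in $t$.

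\emph{A priori bound on $\chi(y^\eps)$.} Suppose for a given $t\in I$ that $\chi(y^\eps(t))=1$, or more robustly that $|y^\eps(t)|\le \rho+\tfrac12$. Note that $a\in C_b^2$ with norm depending only on $\rho$, hence only on $T_0$. Then Lemma~\ref{l52} gives
\[
\Bigl|\int a(x)\rho^\eps(t,x)\,dx-m_\omega a(y^\eps(t))\Bigr|\lesssim_{T_0} D^\eps(t)\le \Xi^\eps(t)+\eps^2 .
\]
This uses $D^\eps\le1$, which holds once $h_0+\eps_0^2\le1$. By definition $\int a\rho^\eps=m_\omega x(t)-\gamma^\eps(t)$, so
\[
m_\omega\bigl|x(t)-a(y^\eps(t))\bigr|\lesssim \Xi^\eps(t)+\eps^2 .
\]
If $|y^\eps(t)|<\rho$, then $a(y^\eps)=y^\eps$ and the desired estimate follows.

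\emph{Removing the a priori assumption.} I will use continuity. We have $y^\eps(0)=x_0$ since $z^\eps(0)=0$, and $|x_0|\le R_0=\rho-1$. The map $t\mapsto y^\eps(t)=x(t)+\eps z^\eps(t)$ is continuous. Let $J:=\{t\in I: |y^\eps(s)-x(s)|<\tfrac12 \ \forall s\le t\}$; this set is relatively open and contains $0$. For $t\in J$ we have $|y^\eps(t)|<R_0+\tfrac12<\rho$, so $\chi=1$ near $y^\eps(t)$ and the previous step gives
\[
|x(t)-y^\eps(t)|\le C_{T_0}(\Xi^\eps(t)+\eps^2)\le C_{T_0}(h_0+\eps_0^2).
\]
Choose $h_0,\eps_0$ with $C_{T_0}(h_0+\eps_0^2)<\tfrac14$. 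Then on the closure of $J$ in $I$ the distance stays at most $\tfrac14<\tfrac12$, so $J$ is also closed in $I$. Since $I$ is connected, $J=I$. This yields both $\chi(y^\eps(t))=1$ and the linear bound for every $t\in I$.

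\emph{Main obstacle.} The key point is that $a$ is only equal to the identity inside $B(0,\rho)$. The test-function estimate alone cannot distinguish $y^\eps$ lying far outside the ball, where $a$ vanishes and a spurious solution with $a(y^\eps)\approx x(t)$ could occur. The continuity argument, anchored at $y^\eps(0)=x_0$, is what rules this out, and it requires the constant in Lemma~\ref{l52} to be uniform in $t\in I$. That uniformity is available because the lemma's constant depends only on $T_0$.
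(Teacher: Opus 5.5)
Your proposal is correct and follows essentially the same route as the paper: you test $\rho^\eps$ against $a(x)=x\chi(x)$, combine Lemma~\ref{l52} with $\int a\rho^\eps\,dx=m_\omega x(t)-\gamma^\eps(t)$, and close with a continuity argument anchored at $y^\eps(0)=x_0$. The only difference is cosmetic: the paper argues by contradiction at the first exit time of $y^\eps$ from $B(0,\rho)$, while you run an open--closed argument on the condition $|y^\eps(t)-x(t)|<\tfrac12$, which keeps $y^\eps$ strictly inside the region where $\chi\equiv1$ and so avoids any boundary-of-the-ball issue.
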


\begin{proof}
By our modulation branch choice, $y^\eps(0)=x_0$. Recall $R_0=\sup_{0\le t\le T_0}|x(t)|$, $\rho=R_0+1$. Suppose $y^\eps$ leaves $B(0,\rho)$, and let $t_*\in I$ be the first exit time. Then $|y^\eps(t_*)|=\rho$, $|y^\eps(t)|\le\rho$ for $0\le t\le t_*$, so $\chi(y^\eps(t))=1$ on $[0,t_*]$.

Set $a(x):=x\chi(x)$. Since $a(y^\eps(t))=y^\eps(t)$ on this interval,
\[
m_\omega[x(t)-y^\eps(t)]
=
\gamma^\eps(t)+\int a(x)\rho^\eps(t,x)\,dx-m_\omega a(y^\eps(t)).
\]
Since $a\in C_b^2(\R^3;\R^3)$, Lemma~\ref{l52} gives
\[
|x(t)-y^\eps(t)|\lesssim_{T_0}\Xi^\eps(t)+\eps^2,\qquad 0\le t\le t_*.
\]
Choose $h_0$ and $\eps_0$ so $C_{T_0}(h_0+\eps_0^2)<\tfrac12$. At $t=t_*$, this gives $|x(t_*)-y^\eps(t_*)|<\tfrac12$. But $|y^\eps(t_*)|=R_0+1$ and $|x(t_*)|\le R_0$, so $|y^\eps(t_*)-x(t_*)|\ge1$, a contradiction. Hence no exit occurs, and the estimate holds for $t  \in I$.
\end{proof}

\begin{lemma}
\label{l55}
Under the hypotheses of Lemma~\ref{l54},
\[
\left\|\rho^\eps(t)\,dx-m_\omega\delta_{x(t)}\right\|_{(C_b^2)^*}
+
\left\|p^\eps(t)\,dx-2m_\omega\xi(t)\delta_{x(t)}\right\|_{(C_b^2)^*}
\lesssim_{T_0}\Xi^\eps(t)+\eps^2
\]
for every $t\in I$.
\end{lemma}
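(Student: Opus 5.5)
The plan is to combine Lemma~\ref{l52}, which measures concentration around the modulated center $y^\eps(t)$, with Lemma~\ref{l54}, which controls the distance between $y^\eps(t)$ and $x(t)$. The link between the two is a simple fact about Dirac masses in $(C_b^2)^*$. For $f\in C_b^2$ with $\|f\|_{C_b^2}\le1$ and any $a,b\in\R^3$, the mean value theorem gives
\[
|\langle\delta_a-\delta_b,f\rangle|=|f(a)-f(b)|\le\|\nabla f\|_{L^\infty}|a-b|\le|a-b|.
\]
Hence
\[
\|\delta_a-\delta_b\|_{(C_b^2)^*}\le|a-b|.
\]

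Fix $t\in I$. For the mass density, apply the triangle inequality:
\[
\|\rho^\eps(t)\,dx-m_\omega\delta_{x(t)}\|_{(C_b^2)^*}
\le
\|\rho^\eps(t)\,dx-m_\omega\delta_{y^\eps(t)}\|_{(C_b^2)^*}
+m_\omega|y^\eps(t)-x(t)|.
\]
Lemma~\ref{l52} bounds the first term by $D^\eps(t)=|\sigma^\eps(t)|+|\lambda^\eps(t)|+\eps^2$, which is at most $\Xi^\eps(t)+\eps^2$. Lemma~\ref{l54} bounds the second term by $C_{T_0}(\Xi^\eps(t)+\eps^2)$.

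The momentum density is handled the same way:
\[
\|p^\eps(t)\,dx-2m_\omega\xi(t)\delta_{x(t)}\|_{(C_b^2)^*}
\le
\|p^\eps(t)\,dx-2m_\omega\xi(t)\delta_{y^\eps(t)}\|_{(C_b^2)^*}
+2m_\omega|\xi(t)|\,|y^\eps(t)-x(t)|.
\]
The first term is again controlled by Lemma~\ref{l52}. For the second, $|\xi(t)|\le K_\xi$ uniformly in $t$, since $|\xi(t)|^2\le2\mathcal H(0)$ because $V\ge0$, exactly as in the proof of Theorem~\ref{t12}. Lemma~\ref{l54} then bounds this term by $C_{T_0}(\Xi^\eps(t)+\eps^2)$.

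The only real care needed is to check that the hypotheses of Lemma~\ref{l52} hold on $I$, namely $D^\eps(t)\le1$. This follows from $\Xi^\eps\le h_0$ together with $\eps<\eps_0$, after shrinking $h_0$ and $\eps_0$ so that $h_0+\eps_0^2\le1$. With that in place, adding the two estimates gives the stated bound with a constant depending only on $T_0$ (through $K_\xi$, $m_\omega$ and the constants of Lemmas~\ref{l52} and~\ref{l54}). There is no substantial obstacle.
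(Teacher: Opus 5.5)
Your proposal is correct and follows essentially the same route as the paper: the triangle inequality with $\|\delta_a-\delta_b\|_{(C_b^2)^*}\le|a-b|$, Lemma~\ref{l52} for concentration at $y^\eps(t)$, Lemma~\ref{l54} for $|y^\eps(t)-x(t)|$, and the uniform bound on $\xi$. Your extra remark about shrinking $h_0,\eps_0$ to get $D^\eps\le1$ is harmless but unnecessary, since the hypotheses of Lemma~\ref{l54} already assume that Lemma~\ref{l52} applies on $I$.
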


\begin{proof}
For $a,b\in\R^3$, $\|\delta_a-\delta_b\|_{(C_b^2)^*}\le|a-b|$. Thus, by Lemmas~\ref{l52} and~\ref{l54},
\[
\left\|\rho^\eps(t)\,dx-m_\omega\delta_{x(t)}\right\|_{(C_b^2)^*}
\le
\left\|\rho^\eps(t)\,dx-m_\omega\delta_{y^\eps(t)}\right\|_{(C_b^2)^*}
+m_\omega|y^\eps(t)-x(t)|
\lesssim_{T_0}\Xi^\eps(t)+\eps^2.
\]
Similarly,
\[
\left\|p^\eps(t)\,dx-2m_\omega\xi(t)\delta_{x(t)}\right\|_{(C_b^2)^*}
\le
\left\|p^\eps(t)\,dx-2m_\omega\xi(t)\delta_{y^\eps(t)}\right\|_{(C_b^2)^*}
+2m_\omega|\xi(t)|\,|y^\eps(t)-x(t)|.
\]
Since $\xi$ is uniformly bounded, the conclusion follows from Lemmas~\ref{l52} and~\ref{l54}.
\end{proof}

We now give the proof of Theorem~\ref{main}.

\begin{proof}[Proof of Theorem~\ref{main}]
Fix $T>0$ and take $T_0=T$. Set $\mathcal D^\eps(t):=\Xi^\eps(t)+\eps^2$. By Lemma~\ref{l35} and the uniform boundedness of $\xi$,
\[
0\le E_0(\psi^\eps(t))-E_0(P_\omega)\le A_T\mathcal D^\eps(t)
\]
for some $A_T>0$. Let $h_0,\eps_0$ be from Lemma~\ref{l54}, and let $h_\omega$ be from Proposition~\ref{stab}. Shrinking $h_0,\eps_0$ if needed, assume $h_0+\eps_0^2<1$ and $A_T(h_0+\eps_0^2)<h_\omega$. By Lemmas~\ref{l34} and~\ref{l53}, $\Xi^\eps(0)\lesssim_T\eps^2$. Choose $\eps_T\in(0,\eps_0)$ such that $\Xi^\eps(0)<\tfrac12h_0$ for $0<\eps<\eps_T$.

Define
\[
\tau_\eps^*:=\sup\{t\in[0,T]:\Xi^\eps(s)\le h_0\text{ for all }s\in[0,t]\}.
\]
By continuity of $\sigma^\eps,\lambda^\eps,\gamma^\eps$, we have that $\tau_\eps^*>0$. For $t<\tau_\eps^*$,  we see that $\Xi^\eps(t)\le h_0$, hence
\begin{align}
	E_0(\psi^\eps(t))<E_0(P_\omega)+h_\omega. \label{ECC}
\end{align}

Thus Proposition~\ref{p51} applies on $[0,\tau_\eps^*)$. Moreover, $|\sigma^\eps|+|\lambda^\eps|+\eps^2\le\mathcal D^\eps(t)<1$, so Lemmas~\ref{l52}--\ref{l55} apply. In particular,
\begin{align}
	\left\|\rho^\eps(t)\,dx-m_\omega\delta_{x(t)}\right\|_{(C_b^2)^*}
+
\left\|p^\eps(t)\,dx-2m_\omega\xi(t)\delta_{x(t)}\right\|_{(C_b^2)^*}
\lesssim_T\mathcal D^\eps(t). \label{conc}
\end{align}

We now estimate the three components of $\Xi^\eps$.
Indeed, we claim that
$  
|\dot{\sigma}^\eps(t)|
+|\dot{\lambda}^\eps(t)|
+|\dot{\gamma}^\eps(t)|
\lesssim_T \mathcal D^\eps(t).
$ 
By the momentum identity and $\dot\xi=-\nabla V(x(t))$,
\[
\dot\sigma^\eps(t)=-2\int\nabla V\,\rho^\eps\,dx+2m_\omega\nabla V(x(t)).
\]
Since $\chi(x(t))=1$,
\[
\tfrac12\dot\sigma^\eps(t)
=
-\left[\int\chi\nabla V\,\rho^\eps\,dx-m_\omega\nabla V(x(t))\right]
-\int(1-\chi)\nabla V\,\rho^\eps\,dx.
\]
Because $\chi\nabla V\in C_b^2$, \eqref{conc} controls the first term by $C_T\mathcal D^\eps(t)$. Using \eqref{conc} with $1-\chi$ gives $\int(1-\chi)\rho^\eps\lesssim_T\mathcal D^\eps(t)$. Also, \eqref{l35e} and $E_0(\psi^\eps)-E_0(P_\omega)\ge0$ give $\int(1-\chi)V\rho^\eps\lesssim_T\mathcal D^\eps(t)$. By Lemma~\ref{l36} and Cauchy--Schwarz,
\[
\int(1-\chi)|\nabla V|\rho^\eps
\le
\left(\int(1-\chi)|\nabla V|^2\rho^\eps\right)^{1/2}
\left(\int(1-\chi)\rho^\eps\right)^{1/2}
\lesssim_T\mathcal D^\eps(t).
\]
Thus $|\dot\sigma^\eps(t)|\lesssim_T\mathcal D^\eps(t)$. Next, by $\dot x=2\xi$,
\[
\dot\lambda^\eps(t)=2m_\omega\xi(t)\cdot\nabla V(x(t))-\int\nabla(\chi V)(x)\cdot p^\eps(t,x)\,dx.
\]
Since $\nabla(\chi V)(x(t))=\nabla V(x(t))$,
\[
\dot\lambda^\eps(t)=\int\nabla(\chi V)(x)\cdot[2m_\omega\xi(t)\delta_{x(t)}-p^\eps(t,x)\,dx].
\]
As $\nabla(\chi V)\in C_b^2$, \eqref{conc} yields $|\dot\lambda^\eps(t)|\lesssim_T\mathcal D^\eps(t)$. Finally, set $a(x):=x\chi(x)$. By the continuity equation, we have that
\[
\dot\gamma^\eps(t)=2m_\omega\xi(t)-\int Da(x)p^\eps(t,x)\,dx,
\]
where $Da$ is the Jacobian of $a$. Since $Da(x(t))=I_3$,
\[
\dot\gamma^\eps(t)=\int Da(x)[2m_\omega\xi(t)\delta_{x(t)}-p^\eps(t,x)\,dx].
\]
Applying \eqref{conc} row by row to $Da\in C_b^2$ gives $|\dot\gamma^\eps(t)|\lesssim_T\mathcal D^\eps(t)$. This proves the claim. Combining and integrating,
\[
\Xi^\eps(t)\lesssim_T\eps^2+\int_0^t\Xi^\eps(s)\,ds,\qquad 0\le t<\tau_\eps^*.
\]
Gronwall lemma gives
\begin{align}
	\sup_{0\le t<\tau_\eps^*}\Xi^\eps(t)\lesssim_T\eps^2. \label{xibd}
\end{align}

By continuity, this holds at $t=\tau_\eps^*$. Shrinking $\eps_T$, assume $\Xi^\eps(\tau_\eps^*)<\tfrac12h_0$. If $\tau_\eps^*<T$, continuity would extend the bootstrap condition beyond $\tau_\eps^*$, a contradiction. Hence $\tau_\eps^*=T$ and $\sup_{0\le t\le T}\Xi^\eps(t)\lesssim_T\eps^2$.

Thus,  \eqref{ECC} holds on $[0,T]$, so Proposition~\ref{p51} applies on $[0,T]$, giving continuous modulation parameters and $\sup\|w_1^\eps(t)\|_{H^1_\eps}\lesssim_T\eps$. Lemma~\ref{l54}, first on $[0,T)$ and then at $T$ by continuity, gives $\sup|y^\eps(t)-x(t)|\lesssim_T\eps^2$.

Set $d^\eps(t):=(y^\eps(t)-x(t))/\eps$. Then $\sup|d^\eps(t)|\lesssim_T\eps$. Since $P_\omega\in H^2$,
\[
\|P_\omega(\cdot-d^\eps(t))-P_\omega\|_{H^1}\lesssim_\omega |d^\eps(t)|\lesssim_T\eps.
\]
The uniform boundedness of $\xi$ then gives
\[
\left\|e^{\frac{i}{\eps}[\xi(t)\cdot x+\eps\theta_*^\eps(t)]}
\left[P_\omega\!\left(\frac{x-y^\eps(t)}{\eps}\right)-P_\omega\!\left(\frac{x-x(t)}{\eps}\right)\right]\right\|_{H^1_\eps}
\lesssim_T\eps.
\]

Define
\[
w^\eps(t,x):=w_1^\eps(t,x)
+e^{\frac{i}{\eps}[\xi(t)\cdot x+\eps\theta_*^\eps(t)]}
\left[P_\omega\!\left(\frac{x-y^\eps(t)}{\eps}\right)-P_\omega\!\left(\frac{x-x(t)}{\eps}\right)\right].
\]
Then $w^\eps\in C([0,T];H^1_\eps)$ and $\sup\|w^\eps(t)\|_{H^1_\eps}\lesssim_T\eps$.

Finally, set $\theta^\eps(t):=\theta_*^\eps(t)+\frac1\eps\xi(t)\cdot x(t)$. This is continuous and
\[
\frac{i}{\eps}[\xi(t)\cdot x+\eps\theta_*^\eps(t)]
=
\frac{i}{\eps}\xi(t)\cdot(x-x(t))+i\theta^\eps(t).
\]
Hence
\[
u^\eps(t,x)=e^{\frac{i}{\eps}\xi(t)\cdot(x-x(t))+i\theta^\eps(t)}P_\omega\!\left(\frac{x-x(t)}{\eps}\right)+w^\eps(t,x),
\]
which proves \eqref{deco} and \eqref{err}. This completes the proof of theorem.
\end{proof}


\section{Proofs of the corollaries}
\label{pcor}

We conclude by deriving the concentration of mass and momentum
and the approximation of the barycenter. Both statements are
direct consequences of the quantitative defect estimate
\eqref{xibd} obtained in the proof of Theorem~\ref{main}.

\begin{proof}[Proof of Corollary~\ref{mcon}]
Fix $T>0$. Lemma~\ref{l55} and \eqref{xibd} give
\[
\left\|\rho^\eps(t)\,dx-m_\omega\delta_{x(t)}\right\|_{(C_b^2)^*}
+
\left\|p^\eps(t)\,dx-2m_\omega\xi(t)\delta_{x(t)}\right\|_{(C_b^2)^*}
\lesssim_T\Xi^\eps(t)+\eps^2\lesssim_T\eps^2
\]
uniformly for $0\le t\le T$, which is \eqref{meas}.
\end{proof}

\begin{proof}[Proof of Corollary~\ref{bary}]
Since $u^\eps(t)\in\Sigma$, the barycenter is well defined. A standard cutoff approximation to the coordinate functions, together with the uniform $L^1$ bound for $p^\eps$ from Lemma~\ref{l31}, gives
\[
\frac{d}{dt}\int x\rho^\eps(t,x)\,dx=\int p^\eps(t,x)\,dx.
\]
Thus $\dot b^\eps(t)=\frac1{m_\omega}\int p^\eps(t,x)\,dx$. Since $\dot x(t)=2\xi(t)$ and $\sigma^\eps(t)=\int p^\eps-2m_\omega\xi(t)$, we get $\dot b^\eps(t)-\dot x(t)=\frac1{m_\omega}\sigma^\eps(t)$.

At $t=0$, $b^\eps(0)=\frac1{m_\omega}\int(x_0+\eps y)P_\omega^2(y)\,dy=x_0=x(0)$, since $P_\omega$ is radial. Hence, by \eqref{xibd},
\[
|b^\eps(t)-x(t)|\le\frac1{m_\omega}\int_0^t|\sigma^\eps(s)|\,ds\lesssim_T\eps^2.
\]
Taking the supremum over $0\le t\le T$ gives \eqref{bar2}.
\end{proof}

\appendix

\section{Proof of Proposition~\ref{wp}}
\label{sec:cauchy}

As the parameter $\eps$ plays no role in the present Cauchy theory, we
shall assume here that 
$\eps=1$, and consider
\begin{equation}
  \label{eq:NLSPcub-quint}
  i\d_t u+\Delta u = V(x) u -|u|^2u+|u|^4u,\quad x\in \R^3\quad;\quad
  u_{\mid t=0}=u_0\in \Sigma. 
\end{equation}
We explain why Proposition~\ref{wp} can be inferred from \cite{CW89},
\cite{Jao2016,Jao2018} and \cite{Zhang}. 
\smallbreak

We note that unless $V$ grows quadratically in all the directions, the
space $C(\R;\Sigma)$ involved in Proposition~\ref{wp} is smaller than
the space $C(\R;\H^1)$, where
\begin{equation*}
 \H^1:= \left\{ u\in H^1(\R^3), \int_{\R^3}V(x)|u(x)|^2dx<\infty \right\}
\end{equation*}
is the largest space ensuring that mass $M$ and energy
$E_{V}$ are
well-defined. Note that in order to properly
define the center of mass $b^\eps(t)$ involved in
Corollary~\ref{bary}, the above energy space may not suffice. In
addition, it is not clear that our strategy to construct the solution $u$ to
\eqref{eq:NLSPcub-quint} allows to work in a larger space than
$C(\R;\Sigma)$. Indeed, some results from \cite{Jao2018}, which we
use here, rely of the parametrix construction from
\cite{Fujiwara79,Fujiwara},  and involve quite naturally the space
$\Sigma$, when the behavior of $V$ at infinity is not more specified
than in Assumption~\eqref{H1}. This can be seen for instance by
inspecting the proof of \cite[Lemma~4.8]{Jao2018}, which measures the
interplay between $\dot H^1$-critical scaling transforms, and
$e^{-it(-\Delta+V)}$ viewed as a Fourier integral operator.  

Consider the Hamiltonian
\begin{equation*}
  H= -\Delta+V.
\end{equation*}
By using a gauge transform in
\eqref{eqn} (replace $u$ with $u e^{-it(1+\inf V)}$, a transform which
does not affect Proposition~\ref{wp}), we may
assume $V\ge 1$ in \eqref{eq:NLSPcub-quint}. 

One technical drawback in working in $\Sigma$  is that, unlike
$H^{1/2}$, the operator $H^{1/2}_{\rm harmo}$, corresponding to
the harmonic potential $|x|^2$ in place of $V$, does not commute with
$S_V(t)$. Instead of $ H^{1/2}_{\rm harmo}$, we consider the
operators $\nabla$ and (multiplication by) $x$. We note that if $w$
solves
\begin{equation*}
  i\d_t w=H w +F, 
\end{equation*}
then $\nabla w$ and $x w$ solve
\begin{equation*}
  \left\{
    \begin{aligned}
&      i\d_t \nabla w = H \nabla w +w\nabla V +\nabla F,\\
&      i\d_t x w = H x w -\nabla w +x F.
    \end{aligned}
  \right.
\end{equation*}
Since $V$ satisfies \eqref{H1}, we have the pointwise bound $|\nabla
V(x)|\lesssim 1+|x|$, so when considering energy estimates or
estimates based on Strichartz norms, we obtain a closed system of
estimates, and $\|\nabla w\|_Y+\|x w\|_Y$ is estimated in the same
fashion as $\|\nabla w\|_Y$ in the case $V=0$, thanks to an
additional Gronwall type argument.

\subsection{Technical preliminaries}

From now on, we suppose that $V$ satisfies
Assumption~\eqref{H1} and $V\ge 1$,  where the latter comes for free in
the proof of Proposition~\ref{wp}, as noted above. 

We first recall that \cite[Proposition~2]{Jao2018}, involving some
equivalence of 
norms when in addition to Assumption~\eqref{H1}, 
$V(x)\gtrsim |x|^2$, remains valid under Assumption~\eqref{H1} only, as
proven in \cite[Lemma~2.12]{Ca26} ($V\ge 1$). As noted in \cite{Ca26},
this result can be proven either by complex interpolation, or by
Weyl-H\"ormander pseudodifferential calculus, thanks in particular to
\cite{Beals1979,HelfferNier2005} (using $s\in [0,1]$ in
\cite[Theorem~4.8]{HelfferNier2005} and not only $s\in \{1/2,1\}$ as
in \cite[Remark~2.13]{Ca26}).

\begin{lemma}[Equivalence of norms]\label{lem:equiv-norms}
  Let $V$ satisfying Assumption~\eqref{H1} with in addition $V(x)\ge
  1$ for all $x\in \R^3$. For any $1< p <\infty$ and $0\le s\le 1$,
  there exist $C_1,C_2>0$ such that for any 
$\phi\in\Sch(\R^d)$,
\begin{equation*}
  \|(-\Delta)^s \phi\|_{L^p}+ \|V^s \phi \|_{L^p}\le C_1
  \|H^{s} \phi\|_{L^p}\le C_2 
  \(\|(-\Delta)^s \phi\|_{L^p}+ \|V^s \phi \|_{L^p}\).
\end{equation*}
\end{lemma}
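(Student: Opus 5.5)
The plan is to prove the two inequalities separately, first for integer endpoints $s\in\{0,1\}$ and then for general $s\in[0,1]$ by complex interpolation. For $s=0$ there is nothing to prove. The case $s=1$ is the core: we must show
\[
\|\Delta\phi\|_{L^p}+\|V\phi\|_{L^p}\lesssim\|H\phi\|_{L^p}
\]
(the reverse bound is the triangle inequality). Once it holds, the family of operators $(-\Delta)^{z}H^{-z}$ and $V^{z}H^{-z}$ can be interpolated on the strip $0\le\RE z\le1$. This uses bounded imaginary powers: $H^{it}$ and $(-\Delta)^{it}$, $V^{it}$ are bounded on $L^p$ with at most exponential growth in $|t|$. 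For $H^{it}$ this follows from Gaussian heat kernel bounds for $e^{-tH}$, since $V\ge1$ gives a nonnegative potential and Feynman--Kac domination. That yields the left inequality for $0<s<1$. The right inequality follows by interpolating $H^{z}(1+(-\Delta)^{z}+V^{z})^{-1}$ in the same way, or by duality.

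For the endpoint $s=1$, the plan is to view $H=-\Delta+V$ as a Weyl--H\"ormander pseudodifferential operator with symbol $|\xi|^2+V(x)$. This symbol lies in $S(m,g)$ with weight $m=\langle\xi\rangle^2+V(x)$ and metric
\[
g=\frac{dx^2}{m}+\frac{d\xi^2}{m}.
\]
Assumption \eqref{H1} makes this work: $|\partial^\alpha V|\lesssim V^{1-|\alpha|/2}$ for $|\alpha|\le2$, by Lemma~\ref{l36} and $V\ge1$, and the higher derivatives are bounded. We construct a parametrix $Q\in S(m^{-1},g)$ with $QH=I+R$, where $R\in S(m^{-\infty},g)$. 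Then $\Delta Q$ and $VQ$ have symbols in $S(1,g)$, hence are Calder\'on--Zygmund-type operators bounded on $L^p$ for $1<p<\infty$.

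The $L^p$ boundedness of order-zero operators in this calculus for the specific metric above is what I expect to be the main obstacle. The metric is not the standard $S^0_{1,0}$ class in $x$, because localization in $x$ happens at scale $m^{-1/2}$. I would first try to reduce to the classical Calder\'on--Zygmund theory by checking kernel estimates $|\partial_x^\beta K(x,y)|\lesssim|x-y|^{-3-|\beta|}$ directly from the symbol bounds. Failing that, I would cite the known results for this calculus (Beals, Helffer--Nier) as indicated before the lemma.

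The remainder $R$ is smoothing and decaying, and $H^{-1}$ is bounded on $L^p$ since $V\ge1$ (via the heat kernel). Writing $\phi=QH\phi-R\phi$ then closes the estimate
\[
\|\Delta\phi\|_{L^p}+\|V\phi\|_{L^p}\lesssim\|H\phi\|_{L^p}+\|\phi\|_{L^p}\lesssim\|H\phi\|_{L^p}.
\]
The last step uses $\|\phi\|_{L^p}\le\|H^{-1}\|_{L^p\to L^p}\|H\phi\|_{L^p}$.
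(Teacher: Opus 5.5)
Your architecture (a Weyl--H\"ormander parametrix at $s=1$, then complex interpolation through bounded imaginary powers for $0<s<1$) combines the two routes the paper points to. The paper gives no proof of its own and defers to \cite[Lemma~2.12]{Ca26} and to Beals and Helffer--Nier. However, the step everything rests on fails as you wrote it. With $g=(dx^2+d\xi^2)/m$, the claim $|\xi|^2+V\in S(m,g)$ requires $|\partial_x^\alpha V(x)|\lesssim m(x,\xi)^{1-|\alpha|/2}$ for \emph{every} $\alpha$, so all derivatives of order $\ge 3$ must decay. Assumption \eqref{H1} only makes them bounded, and the bounds you verify stop at $|\alpha|\le 2$. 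For instance, $V(x)=2+|x|^2+\sin x_1$ satisfies \eqref{H1} and $V\ge 1$, yet $|\partial_{x_1}^3V|=1$ at $x=(2\pi k,0,0)$, which is not $O(\langle x\rangle^{-1})$.

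The repair is to take $g=dx^2+d\xi^2/m$, whose Planck function is $h=m^{-1/2}\le 1$. Membership in $S(m,g)$ then only requires $|\partial^\alpha V|\lesssim m$, which follows from $|\nabla V|\lesssim V^{1/2}\le V$ (Lemma~\ref{l36} and $V\ge 1$) and the boundedness of the higher derivatives. This $g$ is admissible and $m$ is a $g$-weight because $V^{1/2}$ is Lipschitz and $V\ge 1$; for example, $m(X)\lesssim m(Y)\,(1+g_Y^\sigma(X-Y))$. Ellipticity holds since $|\xi|^2+V\ge m/2$, and the parametrix remainder lies in $S(m^{-N/2},g)$ for every $N$.

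Two further corrections. First, the $L^p$ step you single out as the main obstacle is not one. Since $m\ge\langle\xi\rangle^2$, one has $S(1,g)\subset S^0_{1,0}$ for either metric; with yours, the $x$-scale of variation is $m^{1/2}\ge 1$, not $m^{-1/2}$. So $\Delta Q$, $VQ$, $\Delta R$, $VR$ are bounded on $L^p$ by classical Calder\'on--Zygmund theory.

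Second, for the reverse inequality, discard the family $H^z(1+(-\Delta)^z+V^z)^{-1}$. On $\RE z=0$ the operator $1+(-\Delta)^{it}+V^{it}$ is in general not invertible: wave packets at $(x_0,\xi_0)$ with $|\xi_0|^{2it}=e^{2i\pi/3}$ and $V(x_0)^{it}=e^{-2i\pi/3}$ are approximate null vectors. The duality route you mention is the one to use. Write $H^s\phi=H^{s-1}(-\Delta)^{1-s}(-\Delta)^s\phi+H^{s-1}V^{1-s}V^s\phi$. The operators $H^{s-1}(-\Delta)^{1-s}$ and $H^{s-1}V^{1-s}$ are the adjoints of $(-\Delta)^{1-s}H^{s-1}$ and $V^{1-s}H^{s-1}$, which are bounded on $L^{p'}$ by the left inequality applied with $(1-s,p')$.

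With these corrections the argument closes. The paper instead suggests applying the Helffer--Nier result on $H^s$ directly for each $s\in[0,1]$. Your version uses the calculus only at $s=1$, at the price of needing $L^p$-bounded imaginary powers of $H$ and Stein interpolation.
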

We define the norm associated to the energy space as follows:
\begin{equation*}
  \|u\|_{\H^1}^2 = \<Hu,u\> =\int_{\R^3} \(|\nabla
  u(x)|^2+V(x)|u(x)|^2\)dx= \|\nabla u\|_{L^2}^2 + \|V^{1/2}u\|_{L^2}^2.
\end{equation*}
Since $V\ge 1$ is at most quadratic, there exists $C>0$ such that 
\begin{equation}\label{eq:compare-norms}
 \|u\|_{H^1}\le  \|u\|_{\H^1}\le C\|u\|_\Sigma, \quad \forall u\in \Sigma. 
\end{equation}
We also state formally the above remark concerning the propagation
of the first momentum, in the linear case first. 
\begin{lemma}\label{lem:H1-Sigma}
  Let $I\ni t_0$ be a bounded time interval. There exists $C>0$ such
  that 
  \begin{equation*}
    \sup_{t\in I}\|e^{-i(t-t_0)H}\phi\|_\Sigma \le C
    (1+|I|)\|\phi\|_\Sigma,\quad \forall \phi\in \Sigma. 
  \end{equation*}
\end{lemma}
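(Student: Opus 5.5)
We will prove Lemma~\ref{lem:H1-Sigma} with an energy-type estimate on the closed system for $(w,\nabla w,xw)$ described before the lemma, where $w(t)=e^{-i(t-t_0)H}\phi$. By density we may take $\phi\in\Sch(\R^3)$ (or in a core of $H$ contained in $\Sigma$), so that all computations below are justified, and then extend by continuity. Since $e^{-i(t-t_0)H}$ is unitary on $L^2$, $\|w(t)\|_{L^2}=\|\phi\|_{L^2}$ for all $t$. The work is therefore to control $\|\nabla w(t)\|_{L^2}$ and $\|xw(t)\|_{L^2}$.

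The commutator identities give
\[
i\partial_t\nabla w=H\nabla w+w\nabla V,\qquad i\partial_t(xw)=H(xw)-\nabla w .
\]
We treat the extra terms as sources and use Duhamel's formula together with unitarity of $e^{-isH}$ on $L^2$:
\[
\|\nabla w(t)\|_{L^2}\le\|\nabla\phi\|_{L^2}+\Big|\int_{t_0}^t\|w(s)\nabla V\|_{L^2}ds\Big|,\qquad
\|xw(t)\|_{L^2}\le\|x\phi\|_{L^2}+\Big|\int_{t_0}^t\|\nabla w(s)\|_{L^2}ds\Big|.
\]
Assumption~\eqref{H1} gives $|\nabla V(x)|\lesssim1+|x|$, so $\|w\nabla V\|_{L^2}\lesssim\|w\|_{L^2}+\|xw\|_{L^2}$. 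Setting $N(t):=\|w(t)\|_{L^2}+\|\nabla w(t)\|_{L^2}+\|xw(t)\|_{L^2}$, this yields
\[
N(t)\le C\|\phi\|_\Sigma+C\Big|\int_{t_0}^tN(s)\,ds\Big| .
\]

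Gronwall's lemma then gives $N(t)\le C e^{C|t-t_0|}\|\phi\|_\Sigma$, which on the bounded interval $I$ is bounded by a constant $C_I\|\phi\|_\Sigma$. This already proves the lemma with a constant depending on $I$; the stated $C(1+|I|)$ form is just a bookkeeping of that dependence for $|I|$ bounded. If one wants genuinely linear growth, the following sharper route should work.

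First, $\|\nabla w\|_{L^2}\le\|w\|_{\H^1}=\|\phi\|_{\H^1}\lesssim\|\phi\|_\Sigma$ by conservation of $\<Hw,w\>$ and \eqref{eq:compare-norms}. The $xw$ equation then gives $\|xw(t)\|_{L^2}\le\|x\phi\|_{L^2}+C|t-t_0|\,\|\phi\|_\Sigma$. Combining the two bounds gives $C(1+|I|)$ directly, with no Gronwall step needed.

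The main obstacle is the rigorous justification of the differentiated equations. It is not a priori clear that $\nabla w$ and $xw$ belong to the domain needed for the Duhamel formula. We handle this by working with $\phi\in\Sch$: by Lemma~\ref{lem:equiv-norms} and since $H^k$ preserves the relevant weighted Sobolev spaces, $w$ stays regular and decaying. Alternatively, a regularized weight $x/(1+\delta|x|^2)$ can be used, with the estimates uniform in $\delta$ before letting $\delta\to0$.
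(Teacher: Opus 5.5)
Your second (``sharper'') route is essentially the paper's proof: conservation of $\langle Hw,w\rangle$ bounds $\|\nabla w(t)\|_{L^2}$ by $\|\phi\|_{\H^1}\lesssim\|\phi\|_\Sigma$ via \eqref{eq:compare-norms}, and the $L^2$ estimate on $i\partial_t(xw)=H(xw)-\nabla w$ then gives linear growth in $|t-t_0|$. The Gronwall route is correct but only yields a constant like $e^{C|I|}$, so it is the second route that gives the bound with $C$ independent of $I$; your remarks on justifying the computation (density, or a regularized weight) go beyond what the paper writes.
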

\begin{proof}
  We obviously have
  \begin{align*}
    \frac{d}{dt}\|e^{-i(t-t_0)H}\phi\|_{\H^1}^2
    &=\frac{d}{dt}\<H
      e^{-i(t-t_0)H}\phi, e^{-i(t-t_0)H}\phi\> \\
    &= \frac{d}{dt}\<e^{-i(t-t_0)H}H
    \phi, e^{-i(t-t_0)H}\phi\> =0.
  \end{align*}
Let $w(t) = e^{-i(t-t_0)H}\phi$.  From the above remark
  \begin{equation*}
    i\d_t xw= H x w -\nabla
    w, 
  \end{equation*}
  so we infer by $L^2$-energy estimate,
  \begin{align*}
    \|x w(t)\|_{L^2}
    &\le
    \|x\phi\|_{L^2}+\left|\int_{t_0}^t \|\nabla
      w\|_L^2ds\right|\\
    &\le \|x\phi\|_{L^2}+
      \left|\int_{t_0}^t \<H
      e^{-i(s-t_0)H}\phi, e^{-i(s-t_0)H}\phi\>^{1/2}ds\right| \\
    &\le \|x\phi\|_{L^2}
    + |t-t_0| \|\phi\|_{\H^1},
  \end{align*}
  hence the lemma, in view of \eqref{eq:compare-norms}. 
\end{proof}
In view of Assumption~\eqref{H1}, \cite{Fujiwara79,Fujiwara} guarantees
that the $L^2$-unitary propagator $S_V(t) = e^{-itH}$ enjoys local
dispersive  estimates in the sense that there exists $\tau>0$ such
that
\begin{equation}\label{eq:dispV}
  \left\| e^{-itH}\right\|_{L^1(\R^3)\to L^\infty(\R^3)}\lesssim
  \frac{1}{|t|^{3/2}},\quad |t|\le \tau. 
\end{equation}
As evoked in Remark~\ref{rem:smoothness}, the proof presented in
\cite{Fujiwara79,Fujiwara} relies on the boundedness of the family
$(\d^\alpha V)_{2\le |\alpha|\le M}$, for some finite $M$ whose value
is not examined. In particular, Strichartz estimates, including the
endpoint case, are available, provided that only bounded time
intervals are considered (the case of the harmonic oscillator shows
that $ H$ may have eigenfunctions, whose evolution under $S_V(t)$
is a solitary wave). 
\smallbreak

We see
that the arguments from the proof of \cite[Theorem~1.1]{Zhang} can be
repeated: if we can solve 
\begin{equation}
\label{eq:NLSPquint}
  i\d_t \u+\Delta \u = V(x) \u +|\u|^4\u,\quad x\in \R^3\quad;\quad
  \u_{\mid t=0}=u_0\in \Sigma,
\end{equation}
then the cubic term in \eqref{eq:NLSPcub-quint} can be viewed as a
subcritical perturbation. In the above mentioned argument, the
property $\u\in C(\R;\Sigma)$ does not suffice: the property
\begin{equation*}
\u,\nabla \u,x\u\in L^q_{\rm loc}(\R;L^r(\R^3))\quad\text{for all
    admissible pair }(q,r),
\end{equation*}
is needed too. To show this holds true, we rely on the local theory
from \cite{CW89} (as presented in \cite{CazCourant}), and
\cite{KV-clay} (in particular for the condition $(iv)$ below). A
direct adaptation of 
\cite[Theorem~4.5.1]{CazCourant} to include the potential $V$ in the
Cauchy problem in $\Sigma$ yields:
\begin{lemma}\label{lem:local-Sigma}
  Let $V$ satisfying \eqref{H1}, and $u_0\in \Sigma$. There exists a
  unique strong $\Sigma$-solution $\u$ of \eqref{eq:NLSPquint} defined
  on the maximal interval $(-T_{\rm min},T_{\rm max})$, with $0<T_{\rm
    min},T_{\rm max}\le \infty$. Moreover, the following properties
  hold.
  \begin{itemize}
  \item[(i)] There is conservation of mass and energy.
  \item[(ii)] $\u,\nabla \u,x\u\in L^q_{\rm loc}((-T_{\rm min},T_{\rm max});L^r(\R^3))$ for
    every admissible pair, that is $(q,r)$ such that
    \begin{equation*}
      2\le r\le 6,\quad \frac{2}{q}=3\(\frac{1}{2}-\frac{1}{r}\). 
    \end{equation*}
  \item[(iii)] If $T_{\rm max}<\infty$, then
    \begin{equation*}
      \|\nabla \u\|_{L^q((0,T_{\rm max});L^r(\R^3))} =\infty 
    \end{equation*}
    for every admissible pair $(q,r)$ such that $2<r<3$.
  \item[(iv)] If $T_{\rm max}<\infty$, then
    \begin{equation*}
      \|\u\|_{L^6((0,T_{\rm max});L^{18}(\R^3))} =\infty .
    \end{equation*}
  \end{itemize}
\end{lemma}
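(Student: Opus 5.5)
The approach is the standard Kato/Cazenave--Weissler fixed point in the Strichartz space adapted to $\Sigma$, following \cite[Theorem~4.5.1]{CazCourant}. The free propagator $e^{-itH}$ is replaced by $S_V(t)=e^{-itH}$, and the commutator identities noted above control $\nabla\u$ and $x\u$. Since \eqref{eq:dispV} holds for $|t|\le\tau$, the Keel--Tao argument gives Strichartz estimates, including the endpoint, on intervals of length at most $\tau$. By concatenation they hold on any bounded interval, with constants depending on its length. For the Duhamel formulation
\[
\u(t)=S_V(t)u_0-i\int_0^t S_V(t-s)\bigl(|\u|^4\u\bigr)(s)\,ds,
\]
I apply $\nabla$ and $x$. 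Their commutators with $H$ produce the source terms $\u\nabla V$ and $-\nabla\u$. Since $|\nabla V(x)|\lesssim1+|x|$, these sources are bounded in $L^1_tL^2_x$ by $|I|\,\sup_{I}(\|\u\|_{L^2}+\|x\u\|_{L^2}+\|\nabla\u\|_{L^2})$. On short intervals they can therefore be absorbed, or handled by a Gronwall argument as in Lemma~\ref{lem:H1-Sigma}.

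The quintic term is $\dot H^1$-critical, so the contraction is not run in a norm controlled by $\|u_0\|_\Sigma$ alone. Following \cite{CW89}, I work in the set where $\|\nabla\u\|_{L^{10}_tL^{30/13}_x}$ (or $\|\u\|_{L^6_tL^{18}_x}$, or $\|\u\|_{L^{10}_{t,x}}$) is small. By Strichartz and dominated convergence this holds for the linear flow on a short interval, whose length depends on the profile of $u_0$ and not just its norm. The fractional chain rule and Sobolev embedding give
\[
\bigl\|\nabla(|\u|^4\u)\bigr\|_{L^2_tL^{6/5}_x}\lesssim\|\u\|_{L^{10}_{t,x}}^4\|\nabla\u\|_{L^{10}_tL^{30/13}_x},
\]
and the analogous bound holds with $x\u$ in place of $\nabla\u$. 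This closes the contraction in a ball of $L^\infty_t\Sigma\cap\{\u,\nabla\u,x\u\in L^q_tL^r_x\}$, using the weaker $L^\infty_tL^2\cap L^{10}_tL^{30/13}$ distance for the fixed point. The result is existence, uniqueness, and (ii).

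Conservation of mass and energy, property (i), follows by regularizing the data (for instance $u_0\in\Sigma$ with $Hu_0\in\Sigma$), computing the identities for smooth solutions, and passing to the limit using continuous dependence. The identities are justified in $\Sigma$ because $\int V|\u|^2$ is finite there, by \eqref{eq:compare-norms}. Lemma~\ref{lem:equiv-norms} would let me work with $H^{1/2}$-type norms if needed, but the $\nabla$, $x$ system suffices.

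The blow-up alternatives (iii) and (iv) are where I expect the main difficulty. For (iii), suppose $\|\nabla\u\|_{L^q((0,T_{\max});L^r)}<\infty$ for some admissible pair with $2<r<3$. Then the Sobolev embedding bounds $\|\u\|_{L^{10}_{t,x}}$ by interpolation with the $L^\infty_tH^1$ bound, so the small-norm condition holds near $T_{\max}$ and the solution can be extended. For (iv), finiteness of $\|\u\|_{L^6L^{18}}$ up to $T_{\max}$ lets me split $(0,T_{\max})$ into finitely many intervals on which this norm is small. On each such interval the Strichartz estimate for $\nabla\u$ and $x\u$ closes with a Gronwall step, as in the long-time perturbation framework of \cite{KV-clay}. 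This bounds $\nabla\u$ in all Strichartz norms and contradicts (iii). The delicate point is to keep every constant uniform on the bounded interval $(0,T_{\max})$, which requires the local-in-time Strichartz constants coming from \eqref{eq:dispV}.
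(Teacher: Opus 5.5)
Your overall route is the one the paper intends. The paper writes out no detailed proof: it only says the lemma is a direct adaptation of \cite[Theorem~4.5.1]{CazCourant}, i.e.\ the Cazenave--Weissler critical $H^1$ theory. That adaptation uses the local-in-time Strichartz estimates from \eqref{eq:dispV} and the closed system for $\nabla u$, $xu$, with \cite{KV-clay} invoked for (iv). Your fixed point, your treatment of the conservation laws and your argument for (iv) are fine. In (iv) you do not even need absorption, since $\bigl\||u|^4\nabla u\bigr\|_{L^{3/2}_tL^{18/13}_x}\le\|u\|_{L^6_tL^{18}_x}^4\|\nabla u\|_{L^\infty_tL^2_x}$ and $(3,18/5)$ is admissible.

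The step that fails as written is your proof of (iii). Interpolating $\nabla u\in L^q_tL^r_x$ with the a priori bound $\nabla u\in L^\infty_tL^2_x$ only yields admissible pairs $(\tilde q,\tilde r)$ with $2\le\tilde r\le r$. Equivalently, after Sobolev you control $u$ only in $L^\infty_tL^2_x\cap L^\infty_tL^6_x\cap L^q_tL^{r^*}_x$ with $r^*=3r/(3-r)$, and no spatial exponent above $r^*$ is reachable. So you obtain $\|u\|_{L^{10}_{t,x}}<\infty$, or $\nabla u\in L^{10}_tL^{30/13}_x$, only when $r\ge 30/13$. For $2<r<30/13$, i.e.\ $q>10$, that bound is not available, yet (iii) is claimed for every $r\in(2,3)$.

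The missing ingredient is the bootstrap you already use for (iv), now applied with the finite scale-invariant norm $\|u\|_{L^q((0,T_{\rm max});L^{r^*})}$. Since $q<\infty$, split $(0,T_{\rm max})$ into finitely many intervals on which this norm is small. Then close the Strichartz estimate for $\nabla u$, and afterwards for $xu$, using for instance
\[
\bigl\||u|^4\nabla u\bigr\|_{L^2_tL^{6/5}_x}\le\|u\|_{L^q_tL^{r^*}_x}^4\,\|\nabla u\|_{L^a_tL^b_x},\qquad \frac1a=\frac12-\frac4q,\quad \frac3b=\frac12+\frac8q,
\]
where $(a,b)$ is admissible as soon as $q\ge 8$; this covers the problematic range $q>10$. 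This gives $\nabla u$ and $xu$ in every Strichartz space on $(0,T_{\rm max})$. Via Duhamel, the linear evolution issued from $u(t)$ is then small for $t$ close to $T_{\rm max}$, which yields the extension. This is the standard way to reach the whole range $2<r<3$, and without it your argument proves (iii) only for $30/13\le r<3$.
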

We note that in the above statement $(iii)$, only the gradient is involved in
the obstruction for global existence. Indeed, we have easily the
blow-up condition
\begin{equation*}
 \|\nabla \u\|_{L^q((0,T_{\rm max});L^r)}  + \|x
      \u\|_{L^q((0,T_{\rm max});L^r)} =\infty .
\end{equation*}
However, as $x\u$ solves
\begin{equation*}
  i\d_t x\u = Hx\u -\nabla \u + |\u|^4x\u,
\end{equation*}
we may use Strichartz estimates with the same numerology as in the
case $V=0$ 
(see e.g. \cite{KV-clay}), to check that if $\|\nabla
\u\|_{L^q((0,T_{\rm max});L^r)} $ is finite for all admissible pair,
then so is $\|x\u\|_{L^q((0,T_{\rm max});L^r)} $. Alternatively,
the norm in condition $(iv)$ is controlled, in view of Sobolev
embedding, as
\begin{equation*}
  \|u(t)\|_{L^{18}(\R^3)} \lesssim \|\nabla u(t)\|_{L^{18/7}(\R^3)},
\end{equation*}
and we note that the pair $(6,\tfrac{18}{7})$ is admissible. We have
similarly: 
\begin{lemma}\label{lem:local-H1}
  Let $V$ satisfying \eqref{H1}, and $u_0\in \H^1$. There exists a
  unique strong $\H^1$-solution $\u$ of \eqref{eq:NLSPquint} defined
  on the maximal interval $(-T_{\rm min},T_{\rm max})$, with $0<T_{\rm
    min},T_{\rm max}\le \infty$. Moreover, the following properties
  hold.
  \begin{itemize}
  \item[(i)] There is conservation of mass and energy.
  \item[(ii)] $\u,\nabla \u,V^{1/2}\u\in L^q_{\rm loc}((-T_{\rm
      min},T_{\rm max});L^r(\R^3))$ for 
    every admissible pair.
  \item[(iii)] If $T_{\rm max}<\infty$, then
    \begin{equation*}
      \|\nabla \u\|_{L^q((0,T_{\rm max});L^r(\R^3))} =\infty 
    \end{equation*}
    for every admissible pair $(q,r)$ such that $2<r<3$.
  \item[(iv)] If $T_{\rm max}<\infty$, then
    \begin{equation*}
      \|\u\|_{L^6((0,T_{\rm max});L^{18}(\R^3))} =\infty .
    \end{equation*}
  \end{itemize}
\end{lemma}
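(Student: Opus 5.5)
The statement is Lemma~\ref{lem:local-H1}, the $\H^1$ analogue of Lemma~\ref{lem:local-Sigma}. I would prove it by a fixed point argument adapted from \cite[Theorem~4.5.1]{CazCourant} and \cite{CW89}. The $\Sigma$ weights $x\u$ are replaced by $V^{1/2}\u$, or equivalently by working with $H^{1/2}\u$.

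\textbf{Commuting with the propagator.} The key simplification is that $H^{1/2}$ commutes with $S_V(t)=e^{-itH}$. Lemma~\ref{lem:equiv-norms} with $s=1/2$ then gives, for $1<r<\infty$,
\[
\|H^{1/2}f\|_{L^r}\approx\|\nabla f\|_{L^r}+\|V^{1/2}f\|_{L^r},
\]
where $(-\Delta)^{1/2}$ is identified with $\nabla$ through Riesz transforms. Local Strichartz estimates follow from the dispersive bound \eqref{eq:dispV} by the Keel--Tao argument on intervals of length at most $\tau$. Applying them to $H^{1/2}\u$ in Duhamel's formula, I need only bound $H^{1/2}(|\u|^4\u)$ in a dual Strichartz norm. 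By the norm equivalence this is controlled by
\[
\|\nabla(|\u|^4\u)\|_{L^{r'}}+\|V^{1/2}|\u|^4\u\|_{L^{r'}}\lesssim \|\u\|_{L^{18}}^4\bigl(\|\nabla\u\|_{L^{r}}+\|V^{1/2}\u\|_{L^{r}}\bigr),
\]
using exactly the H\"older numerology of the free energy-critical case, for instance the pair $(q,r)=(10,30/13)$ together with $L^{10}_tL^{10}_x$ or $L^6_tL^{18}_x$. The multiplication by $V^{1/2}$ passes through the nonlinearity pointwise, so no commutator arises. This is the advantage over the $\Sigma$ case, where the equation for $x\u$ produced the extra term $-\nabla\u$.

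\textbf{Contraction, uniqueness and blow-up criteria.} A contraction in a ball of
\[
\bigl\{\u:\ H^{1/2}\u\in L^q_tL^r_x\ \text{for all admissible }(q,r),\ \|\u\|_{L^6L^{18}}\ \text{small}\bigr\},
\]
measured in the $L^q_tL^r_x$ distance, gives local existence. The time of existence depends on the profile of $u_0$ and not only on its norm, as in the critical theory of \cite{CW89}. Uniqueness follows in the class of strong solutions from the standard Strichartz difference estimate. Property (ii) is built into the construction. The blow-up criteria (iii) and (iv) follow from the usual continuation argument, as in \cite{KV-clay}. If $\|\u\|_{L^6((0,T_{\max});L^{18})}<\infty$, split the interval into finitely many pieces where this norm is small. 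Strichartz estimates then bound $H^{1/2}\u$ on each piece, and the solution extends past $T_{\max}$. Criterion (iii) reduces to (iv) via the Sobolev bound $\|\u\|_{L^{18}}\lesssim\|\nabla\u\|_{L^{18/7}}$ and interpolation between admissible pairs with $2<r<3$.

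\textbf{Conservation laws and the main obstacle.} I expect the main obstacle to be the conservation laws (i), because strong solutions lack the regularity needed to differentiate $M$ and $E_V$ directly. I would regularize the data, either as $u_0^n=(1+H/n)^{-1}u_0$, which lies in $D(H)$, or by smoothing the nonlinearity as in \cite[Section~4.5]{CazCourant}. For the regularized solutions the $H^2$-type regularity makes the formal computation rigorous. I would then pass to the limit using continuous dependence in the Strichartz norms, which holds on compact subintervals of the maximal interval. The energy is continuous on $\H^1$ because $\H^1\hookrightarrow H^1\hookrightarrow L^4\cap L^6$.
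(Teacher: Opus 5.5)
Your proposal follows the paper's route. You apply $H^{1/2}$ to Duhamel's formula, use that it commutes with $S_V$, and invoke Lemma~\ref{lem:equiv-norms} (after the gauge reduction to $V\ge 1$), so that every $L^q_tL^r_x$ estimate becomes that of the $V=0$ energy-critical problem, with $V^{1/2}$ passing through $|\u|^4\u$ pointwise. The paper simply defers the rest to \cite{CW89,CazCourant} and \cite{KV-clay}; your fixed point, your continuation argument for (iv) and your regularization for (i) flesh this out. The one step that fails as written is deducing (iii) from (iv) ``via the Sobolev bound and interpolation between admissible pairs''.

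\textbf{The gap in (iii).} Suppose $\|\nabla\u\|_{L^q((0,T_{\max});L^r)}<\infty$ for one admissible pair with $2<r<3$. The natural interpolation partner is $\nabla\u\in L^\infty((0,T_{\max});L^2)$, which comes from (i) and the defocusing sign; you should say so explicitly. Interpolating between the admissible pairs $(\infty,2)$ and $(q,r)$ only produces pairs $(\tilde q,\tilde r)$ with $2\le\tilde r\le r$. So you reach $(6,18/7)$, and hence $\|\u\|_{L^6L^{18}}\lesssim\|\nabla\u\|_{L^6L^{18/7}}<\infty$, only when $r\ge 18/7$. For $2<r<18/7$ no interpolation or embedding can help, since already at fixed time $H^1\cap W^{1,r}\not\subset L^{18}$, because $3r/(3-r)<18$.

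\textbf{The fix.} You must use the equation. By Sobolev, $\u\in L^q((0,T_{\max});L^{r^*})$ with $r^*=3r/(3-r)\in(6,18)$, $q<\infty$ and $2/q+3/r^*=1/2$. Split $(0,T_{\max})$ into finitely many intervals of length at most $\tau$ on which this norm is small. On each of them, close a Strichartz bootstrap with
\[
\|H^{1/2}(|\u|^4\u)\|_{L^{\tilde q'}L^{\tilde\rho'}}\lesssim\|\u\|_{L^qL^{r^*}}^4\,\|H^{1/2}\u\|_{L^{\gamma}L^{\rho}},
\]
where $(\gamma,\rho)$ and $(\tilde q,\tilde\rho)$ are admissible with $1/\tilde\rho'=4/r^*+1/\rho$. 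Such pairs exist for every $r^*\in(6,\infty)$, and the time exponents then match automatically by scaling. This bounds every Strichartz norm of $H^{1/2}\u$ on $(0,T_{\max})$, in particular $\|\u\|_{L^6L^{18}}$, contradicting (iv). The paper sidesteps this by taking (iii) directly from the Cazenave--Weissler theory.
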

Formally, the proof is even closer to that of the case without
potential than for Lemma~\ref{lem:local-Sigma}, where a coupling
between $\nabla \u$ and $x\u$ is present. Indeed, when considering the
Duhamel's formula
\begin{equation*}
  \u(t) = S_V(t)u_0-i\int_0^t S_V(t-\sigma)\(|\u|^4\u\)(\sigma)d\sigma,
\end{equation*}
one applies the operator $H^{1/2}$ (defined by functional calculus in
general), which commutes with $S_V$,
\begin{equation*}
  H^{1/2}\u(t) = S_V(t)H^{1/2} u_0-i\int_0^t
  S_V(t-\sigma)H^{1/2}\(|\u|^4\u\)(\sigma)d\sigma. 
\end{equation*}
The action of $H^{1/2}$ on the nonlinear term is not explicit in
general, but thanks to 
Lemma~\ref{lem:equiv-norms}, all $L^q_tL^r_x$ estimates are the same as
in the case $V=0$, up to replacing the gradient by $H^{1/2}$, and
changing the multiplicative constants. 
A more sophisticated version of  Lemma~\ref{lem:H1-Sigma} reads as
follows:
\begin{lemma}\label{lem:equiv-space}
  Let $V$ satisfying \eqref{H1}, and $u_0\in \Sigma$. If $\u$ solves
  \eqref{eq:NLSPquint} in the sense of Lemma~\ref{lem:local-H1}, then
  it is also a solution in the sense of Lemma~\ref{lem:local-Sigma},
  that is, $\H^1$-regularity is upgraded to $\Sigma$-regularity.
\end{lemma}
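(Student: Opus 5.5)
We need to show that the $\H^1$-solution $\u$ of Lemma~\ref{lem:local-H1}, with $u_0\in\Sigma$, satisfies $x\u\in L^\infty_{\rm loc}L^2\cap L^q_{\rm loc}L^r$ for all admissible pairs on its $\H^1$-lifespan. Then uniqueness in Lemma~\ref{lem:local-Sigma} identifies the two solutions and their maximal intervals.

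The plan is to propagate the first moment along the nonlinear flow, in the spirit of Lemma~\ref{lem:H1-Sigma}. Fix a compact interval $J=[0,T]\subset(-T_{\rm min},T_{\rm max})$ of the $\H^1$-solution. By Lemma~\ref{lem:local-H1}(ii), $\nabla\u\in L^q(J;L^r)$ for every admissible pair, and in particular $\u\in L^6(J;L^{18})$. Formally, $x\u$ solves
\begin{equation*}
  i\d_t(x\u)=H(x\u)-\nabla\u+|\u|^4x\u ,
\end{equation*}
which is a linear equation for $x\u$ with a known source $-\nabla\u$ and a potential $|\u|^4$.

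First I will run a Strichartz estimate on short subintervals $J_k$, with the local dispersive bound \eqref{eq:dispV} giving Strichartz on intervals of length at most $\tau$:
\begin{equation*}
  \|x\u\|_{L^\infty_{J_k}L^2\cap L^{10/3}_{J_k}L^{10/3}}\lesssim \|x\u(t_k)\|_{L^2}+\|\nabla\u\|_{L^1_{J_k}L^2}+\|\u\|_{L^{6}_{J_k}L^{18}}^4\|x\u\|_{L^{q}_{J_k}L^{r}} .
\end{equation*}
The H\"older exponents are chosen as in the $V=0$ energy-critical theory, e.g. dual pair $L^{2}_tL^{6/5}_x$ with $|\u|^4\in L^{3/2}_tL^{9/2}_x$ and $x\u\in L^6_tL^{18/7}_x$. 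Since $\|\u\|_{L^6(J;L^{18})}<\infty$, I partition $J$ into finitely many intervals on which $\|\u\|_{L^6L^{18}}$ is small, and absorb the last term. Iterating over the partition bounds $x\u$ in all Strichartz norms on $J$ by $\|xu_0\|_{L^2}$ plus $T\sup_J\|\u\|_{\H^1}$.

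To make this rigorous rather than formal, I will regularize. I will either apply the estimate to $x e^{-\delta|x|^2}\u$, or to $\langle x\rangle_R\u$ with a truncated weight $\chi(x/R)x$. Commutators produce $\nabla\u$ times bounded factors, uniformly in $R$, and the same bootstrap applies. Letting $R\to\infty$ by Fatou gives $x\u\in L^\infty(J;L^2)\cap L^q(J;L^r)$. Continuity in time into $\Sigma$ then follows from the Duhamel formula in the weighted norm, or from weak continuity combined with the bound.

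The main obstacle is justifying the equation for the truncated weight with only $\H^1$ regularity and $V$ unbounded. I would handle this through the Duhamel formulation and the commutator $[H,\chi_R x]$, which involves only $\nabla$ and bounded multipliers, rather than through pointwise differentiation. Having shown $\u\in C(J;\Sigma)$ with the Strichartz properties (ii) of Lemma~\ref{lem:local-Sigma}, uniqueness there gives that the $\Sigma$-solution coincides with $\u$ on $J$. Since $J$ was arbitrary, the $\Sigma$-lifespan is at least the $\H^1$-lifespan. The reverse inclusion is trivial by \eqref{eq:compare-norms}.
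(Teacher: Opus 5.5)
Your proposal is correct and is essentially the argument the paper indicates: no separate proof is written there, but it is sketched in the remarks around Lemmas~\ref{lem:H1-Sigma} and~\ref{lem:local-Sigma}. That argument is the nonlinear analogue of Lemma~\ref{lem:H1-Sigma}: propagate $x\u$ through $i\d_t x\u=Hx\u-\nabla\u+|\u|^4x\u$ with local Strichartz estimates, control the source by the $\H^1$ bounds on $\nabla\u$, and absorb the potential term on subintervals where $\|\u\|_{L^6L^{18}}$ is small. This is also why the $\Sigma$ blow-up alternative involves only $\nabla\u$ (or $\|\u\|_{L^6L^{18}}$).

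Two small fixes are needed. First, with $|\u|^4\in L^{3/2}_tL^{9/2}_x$ and $x\u\in L^6_tL^{18/7}_x$, H\"older places $|\u|^4x\u$ in $L^{6/5}_tL^{18/11}_x$, the dual of the admissible pair $(6,18/7)$, not in $L^2_tL^{6/5}_x$; this does not affect the argument. Second, strong continuity of $x\u$ in $L^2$ should come from the limiting Duhamel formula, since weak continuity plus a uniform bound only gives weak continuity.
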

We note that the standard stability argument involved in critical
problems is also 
readily adapted to the present case: \cite[Proposition~4]{Jao2018} can
be resumed (with estimates in $\H^1$ or $\Sigma$, even though $\Sigma$
only is used eventually).  
\smallbreak

We consider a Littlewood-Paley decomposition associated to $V$ like
in \cite{Jao2018}. For $\psi\in C_0^\infty((0,\infty))$ so that
$1\equiv \sum_{N\text{ dyadic}}\psi(\lambda/N)$, defined the spectral
multipliers
\begin{align*}
  & P_N=\psi\(\frac{\sqrt{H}}{N}\),\qquad P_{\le N} = \sum_{M\le N}
    P_N,\\
  & \tilde P_N = e^{-H/2N^2} - e^{-2H/N^2},\quad \tilde P_{\le N} =
    \sum_{M\le N}  \tilde P_N.
\end{align*}
The operators $P_N$ are also considered in \cite{Ca26} thanks to
Weyl-H\"ormander calculus, from
\cite{Beals1979,HelfferRobert1983,HelfferNier2005}. Invoking in
addition \cite{Hebisch1990} like in \cite{Jao2018} (Theorem~2.3 there,
and the proof of Proposition~5), to
address $\tilde P_N$, we 
infer that the Bernstein estimates proven in \cite[Lemma~2.8]{Jao2016},
in the case $V(x)\equiv |x|^2$,
remain true for $V$ 
satisfying Assumption~\eqref{H1} and $V\ge 1$.

\subsection{Main steps of the argument}

As pointed out above, Proposition~\ref{wp} follows from its
counterpart where the cubic term is absent. The proof then follows the
same lines as in \cite{Jao2018}, which is an adaptation of
\cite{Jao2016}: in \cite{Jao2016}, $V(x)=|x|^2$, while in
\cite{Jao2018}, $V$ satisfies Assumption~\eqref{H1}, with in addition
the uniform estimate $V(x)\gtrsim |x|^2$. By invoking
Lemmas~\ref{lem:equiv-norms}, \ref{lem:H1-Sigma}
and \ref{lem:equiv-space}, we can remove the assumption $V(x)\gtrsim
|x|^2$, provided that we work with a $\Sigma$-regularity.
\smallbreak

Let $I\ni 0$ be a compact interval of length at most $\tau$ (see
\eqref{eq:dispV}).  
The notion of profile decomposition follows the same definitions as in
\cite{Jao2016,Jao2018}, up to a shade discussed below.

\begin{definition}
  A \emph{frame} is a sequence $(t_n,x_n,N_n)\in I\times\R^3\times
  2^\N$ conforming to one of the scenarios:
  \begin{enumerate}
  \item $N_n\equiv 1$, $t_n\equiv 0$, and $x_n\equiv 0$.
  \item $N_n\to \infty$ and $N_n^{-1}|x_n|\to r_\infty\in [0,\infty)$.
  \end{enumerate}
\end{definition}
In \cite{Jao2018}, the final property above is replaced by
$N_n^{-1}V(x_n)^{1/2}\to r_\infty\in [0,\infty)$, which turns out to
be equivalent since $V$ behaves quadratically. In the case we consider
here, we resume the property as stated in \cite{Jao2016}, associated to the
space $\Sigma$ (recall that for $V=0$, when working in $H^1$ or $\dot
H^1$, there is no reason to impose a control on $x_n$ -- this remark
is another reason why we work in $\Sigma$ rather than in $\H^1$: in
the case of a partial confinement, the energy space corresponds to
$H^1$ in the non-confined direction(s)).
\begin{definition}
  An \emph{augmented frame} is a sequence $(t_n,x_n,N_n,N'_n)\in
  I\times\R^3\times 2^\N\times \R$ belonging to one of the following
  types:
  \begin{enumerate}
  \item $N_n\equiv 1\equiv N'_n$, $t_n\equiv 0$, $x_n\equiv 0$.
  \item  $N_n\to \infty$ and $N_n^{-1}|x_n|\to r_\infty\in
    [0,\infty)$, and either
    \begin{itemize}
    \item[(a)] $N'_n\equiv 1$ if $r_\infty>0$, or
    \item[(b)] $N_n^{1/2}\le N'_n\le N_n$,
      $N_n^{-1}|x_n|\(\frac{N_n}{N'_n}\)\to 0$, and
      $\frac{N_n}{N'_n}\to \infty$, if $r_\infty=0$. 
    \end{itemize}
  \end{enumerate}
\end{definition}
The operators involved in the profile decomposition are defined by
\begin{equation*}
  \begin{aligned}
    (G_n\phi)(x)&= N_n^{1/2}\phi\(N_n(x-x_n)\),\\
    (\tilde G_nf)(t,x)&= N_n^{1/2}f\(N_n^2(t-t_n),N_n(x-x_n)\),
  \end{aligned}
\end{equation*}
and
\begin{equation*}
  S_n\phi=
  \begin{cases}
    \phi, & \text{ for frames of type 1 (i.e. }N_n\equiv 1\text{)},\\
    \chi\(\frac{N_n}{N'_n}\cdot \)\phi,
          & \text{ for frames of type 2  (i.e. }N_n\to\infty\text{)}, 
  \end{cases}
\end{equation*}
where $\chi\in C_0^\infty(\R^3)$ is equal to $1$ on the unit ball. We
recall the statement of \cite[Proposition~5]{Jao2018}, and explain why
it remains valid in our framework:
\begin{proposition}\label{prop:inv-stri}
Let $I\ni 0$ be a compact interval of length at most $\tau$, and
suppose $f_n$ is a sequence of functions in $\Sigma$ satisfying
\begin{equation*}
  0<\eps\le\|e^{-itH}f_n\|_{L^{10}_{t,x}(I\times\R^3)}
  \lesssim \|f_n\|_\Sigma\le A<\infty.
\end{equation*}
Then, after passing to a subsequence, there exists an augmented frame
\begin{equation*}
  \mathcal F =\{ (t_n,x_n,N_n,N'_n)\}
\end{equation*}
and a sequence of function $\phi_n\in \Sigma$ such that one of the
following holds:
\begin{enumerate}
\item $\mathcal F$ is of type 1 (i.e. $N_n\equiv 1$) and
  $\phi_n=\phi\in \Sigma$ is a weak limit of $f_n$ in $\Sigma$.
\item $\mathcal F$ is of type 2, either $t_n\equiv 0 $ or $N_n^2t_n\to
  \pm \infty$, and $\phi_n = e^{it_nH}G_nS_n\phi$ where $\phi\in \dot
  H^1(\R^3)$ is a weak limit of $G_n^{-1}e^{-it_nH}f_n$ in $\dot
  H^1$. Moreover, if $\mathcal F$ is of type 2a, then $\phi$ also
  belongs to $L^2(\R^3)$. 
\end{enumerate}
The functions $\phi_n$ ave the following properties:
\begin{align*}
  & \liminf_{n\to \infty} \|H^{1/2} \phi_n\|_{L^2}\gtrsim
    A\(\frac{\eps}{A}\)^{15/8}, \\
  &  \lim_{n\to \infty} \(\|f_n\|_{L^6}^6 - \|f_n-\phi_n\|_{L^6}^6 -
      \|\phi_n\|_{L^6}^6 \)=0 ,\\
  & \lim_{n\to \infty} \(\|f_n\|_\Sigma^2 - \|f_n-\phi_n\|_{\Sigma}^2 -
      \|\phi_n\|_{\Sigma}^2 \)=0.
\end{align*}
\end{proposition}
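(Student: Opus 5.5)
We will follow the proof of the linear profile extraction in \cite[Proposition~5]{Jao2018} (itself modeled on \cite[Proposition~3.4]{Jao2016} and the $V=0$ argument of \cite{KV-clay}). The changes needed are in the space and frame conventions: we replace the energy norm $\|\cdot\|_{\H^1}$ by $\|\cdot\|_\Sigma$, and the frame condition $N_n^{-1}V(x_n)^{1/2}\to r_\infty$ by $N_n^{-1}|x_n|\to r_\infty$. In each step we check which properties of $H=-\Delta+V$ are actually used. These are: the local dispersive estimate \eqref{eq:dispV} and the resulting Strichartz estimates on $I$, with $|I|\le\tau$; the equivalence of norms in Lemma~\ref{lem:equiv-norms}; and the Bernstein estimates for $P_N,\tilde P_N$, which remain valid under \eqref{H1} and $V\ge1$ as noted above.

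\emph{Step 1: refined Strichartz estimate and choice of frequency scale.} Interpolating the Strichartz bound with the Bernstein estimates gives
\[
\|e^{-itH}f\|_{L^{10}_{t,x}(I\times\R^3)}\lesssim\|H^{1/2}f\|_{L^2}^{1/5}\sup_N\|e^{-itH}\tilde P_Nf\|_{L^{10}_{t,x}}^{4/5}.
\]
Since $\|H^{1/2}f_n\|_{L^2}\lesssim\|f_n\|_\Sigma\le A$ by \eqref{eq:compare-norms}, we obtain a dyadic $N_n$ with $\|e^{-itH}\tilde P_{N_n}f_n\|_{L^{10}}\gtrsim\eps(\eps/A)^{1/4}$. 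Hölder's inequality against the $L^{10/3}_{t,x}$ Strichartz bound, together with Bernstein, then yields a point $(t_n,x_n)\in I\times\R^3$ at which $N_n^{-1/2}|e^{-it_nH}\tilde P_{N_n}f_n(x_n)|$ is bounded below by a power of $\eps/A$.

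\emph{Step 2: confining the spatial center.} To make $(t_n,x_n,N_n)$ a frame we must show $N_n^{-1}|x_n|$ stays bounded. Here the $x$-weight replaces $V^{1/2}$. The heat-kernel (Hebisch-type) Gaussian bounds for $\tilde P_N$ show that a lower bound on $\tilde P_{N_n}e^{-it_nH}f_n$ near $x_n$ forces $\langle x_n\rangle N_n^{-1}\lesssim\|x e^{-it_nH}f_n\|_{L^2}/\|f_n\|$. By Lemma~\ref{lem:H1-Sigma}, $\|xe^{-it_nH}f_n\|_{L^2}\lesssim(1+\tau)A$. After passing to a subsequence we get $N_n^{-1}|x_n|\to r_\infty$, and if $N_n$ is bounded we reduce to $N_n\equiv1$ with weak limits. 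In that case $N_n\equiv1$ gives type~1, with $\phi$ a weak $\Sigma$-limit of $f_n$. The $\Sigma$-weak compactness is legitimate because $\Sigma\hookrightarrow L^2$ compactly. If $N_n\to\infty$ we choose $N_n'$ as in \cite{Jao2018}, namely $N'_n\equiv1$ if $r_\infty>0$, and otherwise $N'_n$ with $N_n/N'_n\to\infty$ slowly enough. We then take $\phi$ as the weak $\dot H^1$-limit of $G_n^{-1}e^{-it_nH}f_n$; when $r_\infty>0$ it also lies in $L^2$, by the scaling of the $x$-weight. We pass to subsequences so that $t_n\equiv0$ or $N_n^2t_n\to\pm\infty$, absorbing bounded $N_n^2t_n$ into $\phi$ via the local convergence $G_n^{-1}e^{-iN_n^{-2}sH}G_n\to e^{is\Delta}$. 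This convergence follows from \eqref{H1}, since $V$ is rescaled to $N_n^{-2}V(x_n+N_n^{-1}\cdot)$, whose second derivatives vanish in the limit and which is uniformly locally bounded when $N_n^{-1}|x_n|$ is bounded.

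\emph{Step 3: lower bound and decoupling.} The lower bound on $\|H^{1/2}\phi_n\|_{L^2}$ comes from pairing $\phi$ with the rescaled kernel of $\tilde P_{N_n}$ at $x_n$, and gives the exponent $15/8$ after tracking Steps~1 and~2. The $L^6$ decoupling follows from a.e.\ convergence of $G_n^{-1}e^{-it_nH}f_n$ (Rellich on compacts) and the Brezis--Lieb lemma, plus dispersion when $N_n^2t_n\to\pm\infty$. The $\Sigma$ decoupling is the Hilbert-space identity $\|f_n\|^2-\|f_n-\phi_n\|^2-\|\phi_n\|^2=2\RE\langle f_n-\phi_n,\phi_n\rangle_\Sigma$. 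For the gradient part this pairing vanishes by weak convergence. For the $x$-part, when $N_n\to\infty$, $\|x\phi_n\|_{L^2}\lesssim N_n^{-1}(|x_n|N_n+N_n/N'_n)\|\phi\|_{\dot H^1}$-type bounds make it $o(1)$ times bounded quantities, except in type~2a where the $L^2$ membership of $\phi$ is used.

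The main obstacle is this last point together with Step~2: controlling the $x$-weight on the profile $e^{it_nH}G_nS_n\phi$ uniformly for $t_n\in I$. For this we use the commutation identity $i\partial_t(xw)=H(xw)-\nabla w$ and a Gronwall argument as in Lemma~\ref{lem:H1-Sigma}, and the cutoff $S_n$ is what makes $x\,G_nS_n\phi\in L^2$ with the required smallness.
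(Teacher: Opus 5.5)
Your Steps~1 and~2 match the paper's own sketch: the refined Strichartz estimate with the spectral projectors of $H$, the choice of $(t_n,x_n,N_n)$ giving the lower bound $N_n^{1/2}A(\eps/A)^{15/8}$, and $|x_n|\le C(A,\eps)N_n$ from heat-kernel domination plus the $x$-weight. You also correctly make that weight uniform in $t_n\in I$ via Lemma~\ref{lem:H1-Sigma}.

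The gap is in Step~3, the $\Sigma$-decoupling. The paper's sketch does not detail this step (it defers to \cite{Jao2018}), and you claim to settle it directly. You compute $\RE\langle f_n-\phi_n,\phi_n\rangle_\Sigma$ as if $\|\cdot\|_\Sigma$ were invariant under $e^{it_nH}$. But the weak convergence you have is that of $G_n^{-1}e^{-it_nH}f_n$, i.e.\ after undoing the flow. Moreover, $x$ does not commute with $e^{it_nH}$, and $\nabla$ does not either unless $V$ is constant. So neither ``the gradient part vanishes by weak convergence'' nor ``$\|x\phi_n\|_{L^2}=o(1)$ outside type~2a'' follows once $t_n\to t_\infty\neq0$, which the alternative $N_n^2t_n\to\pm\infty$ allows.

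The second claim is in fact false. For $V\equiv1$ one has $xe^{itH}=e^{itH}(x+2it\nabla)$, so in type~2b
\[
\|x\phi_n\|_{L^2}=\|(x+2it_n\nabla)G_nS_n\phi\|_{L^2}\ge 2|t_n|\,\|\nabla G_nS_n\phi\|_{L^2}-\|xG_nS_n\phi\|_{L^2}.
\]
The $\liminf$ of the right-hand side is positive by the lower bound on $\|H^{1/2}\phi_n\|_{L^2}$. The Gronwall argument of Lemma~\ref{lem:H1-Sigma}, which you propose as the remedy, only gives an $O(1)$ bound on $\|x\phi_n\|_{L^2}$, not smallness. Separately, your displayed bound is not small even at $t_n=0$, since it contains $N_n^{-1}|x_n|N_n=|x_n|$. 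There, smallness comes from the type~2b conditions, e.g.\ $|x_n|/N_n'\to0$.

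What is missing is a proof that the cross terms vanish at time $t_n$. You need to transport $x$ and $\nabla$ through the propagator, by Duhamel for $i\partial_t(xw)=H(xw)-\nabla w$ and $i\partial_t\nabla w=H\nabla w+w\nabla V$. With $r_n:=e^{-it_nH}f_n-G_nS_n\phi$, this turns the pairing into pairings of $r_n$ against three kinds of terms: $xG_nS_n\phi$, $t_n\nabla G_nS_n\phi$, and flow-conjugated multiplications by $\nabla V$ applied to $G_nS_n\phi$. You must then show each pairing tends to zero. The tools are: $G_n^{-1}r_n\rightharpoonup0$ in $\dot H^1$ (hence strongly in $L^2_{\rm loc}$), the convergence of $N_n^{-1}x_n$, and the support of $S_n\phi$.

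For constant $V$ this is short; for instance, $\langle xr_n,\nabla G_nS_n\phi\rangle\to0$ after rescaling. For general $V$, however, the $\nabla V$-terms are of size $O(1)$ when $t_\infty\neq0$ and require a genuine decoupling argument. The reduction to $t_n=0$ that you implicitly use is legitimate only for norms commuting with $e^{itH}$, such as $\|H^{1/2}\cdot\|_{L^2}$. For $V=|x|^2$ that norm is $\|\nabla f\|_{L^2}^2+\|xf\|_{L^2}^2$, which is why the issue is invisible in the harmonic case. It is not legitimate for $\|\cdot\|_\Sigma$ under \eqref{H1}.
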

\begin{proof}[Sketch of the proof]
  Using Littlewood-Paley theory associated to $H$ (so $e^{-itH}$ and
  $P_N$ commute),
  \begin{equation*}
    \|e^{-itH} f \|_{L^{10}_{t,x}}\lesssim \|H^{1/2} f\|_{L^2}^{1/5}
    \sup_N \left\| e^{-itH} P_N f\right\|_{L^{10}_{t,x}}^{4/5}.
  \end{equation*}
  (The powers on the right hand side are swapped in the statement
  \cite[Proposition~4.1]{Jao2016}, but the proof indeed yields the
  above powers.) Recalling \eqref{eq:compare-norms} to obtain
  $\|H^{1/2}f_n\|_{L^2}\lesssim A$,  we infer that for each $n$,
  there exists $N_n$ such that 
  \begin{equation*}
    \left\|\tilde P_{N_n} e^{-itH} f_n\right\|_{L^{10}_{t,x}}\gtrsim
    \left\| P_{N_n} e^{-itH} f_n\right\|_{L^{10}_{t,x}}\gtrsim
    \eps^{5/4} A^{-1/4}. 
  \end{equation*}
  Interpolating $L^{10}$ between $L^{10/3}$ and $L^\infty$, invoking
  Strichartz and Bernstein estimates, there are sequences $t_n\in I$
  and $x_n\in \R^3$ such that
  \begin{equation*}
    \left| e^{-it_n H }\tilde P_{N_n} f_n(x_n)\right| \gtrsim
  N_n^{1/2}  A\(\frac{\eps}{A}\)^{15/8}.
\end{equation*}
The spatial weight in the space $\Sigma$ and pointwise heat kernel
bounds (as $V\ge 0$, by the maximum principle, one uses estimates from
the case $V=0$ like in \cite{Jao2016}) yield $|x_n|\le C(A,\eps)N_n$.
Using \eqref{eq:compare-norms}, one can then repeat the argument from
\cite{Jao2018}. 
\end{proof}
The profile decomposition stated in \cite[Proposition~4.14]{Jao2016} can
then be resumed. The modifications needed when the potential is not
exactly quadratic are  the object of Lemmas~4.7 and 4.8 there, which are
stated and proven under the assumption that $V$ is at most quadratic in
the sense of Assumption~\eqref{H1} (boundedness from below is not
needed at this stage). The rest of the argument from
\cite[Section~6]{Jao2016} can then be repeated.

\end{document}